\newtheorem{theorem}{Theorem}[section]
\newtheorem{prop}[theorem]{Proposition}
\theoremstyle{definition}
\newtheorem{definition}[theorem]{Definition}
\theoremstyle{remark}
\newtheorem*{remark}{Remark}
\numberwithin{equation}{section}
\def\leq{\leqslant}
\def\geq{\geqslant}
\begin{document}
		
		\title[GWP of FBP for incompressible MHD]{Global well-posedness of the free boundary problem for incompressible viscous resistive MHD in critical Besov spaces}
		\author{Wei Zhang}
		
\address{Wei Zhang\newline	
	School of  Artificial Intelligence and Big Data, Hefei University, Hefei,  Anhui, 230601, China}
\email{zhangwei16@mails.ucas.edu.cn}
		
		\author{Jie Fu}
			
		\author{Chengchun Hao}
		
		\author{Siqi Yang}
		
		\address{Jie Fu, Chengchun Hao and Siqi Yang\newline
			HLM, Institute of Mathematics, Academy of Mathematics and Systems Science, Chinese Academy of Sciences, Beijing 100190, China\newline
		\and
			School of Mathematical Sciences,
			University of Chinese Academy of Sciences, 
		Beijing 100049, China}
		\email{fujie@amss.ac.cn,hcc@amss.ac.cn,yangsiqi@amss.ac.cn}	
		 
\begin{abstract}			
This paper aims to establish the global well-posedness of the free boundary problem for the incompressible viscous resistive magnetohydrodynamic (MHD) equations. Under the framework of Lagrangian coordinates, a unique global solution exists in the half-space provided that the norm of the initial data in the critical homogeneous Besov space $\dot{B}_{p, 1}^{-1+N/p}(\mathbb{R}_{+}^N)$ is sufficiently small, where $p \in [N, 2N-1)$. Building upon prior work such as (Danchin and Mucha, J. Funct. Anal. 256 (2009) 881--927) and (Ogawa and Shimizu, J. Differ. Equations 274 (2021) 613--651) in the half-space setting, we establish maximal $L^{1}$-regularity for both the Stokes equations without surface stress and the linearized equations of the magnetic field with zero boundary condition. The existence and uniqueness of solutions to the nonlinear problems are proven using the Banach contraction mapping principle.
\end{abstract}

		\keywords{Global well-posedness; viscous resistive MHD; critical Besov space; maximal $L^1$-regularity
		}
					
\subjclass[2020]{35R35, 76D03, 76W05 }

\maketitle

	\section{Introduction}
	In this paper, we address the free boundary problem for the incompressible viscous and resistive magnetohydrodynamic (MHD) equations in $N$-dimensions:
	\begin{align}\label{1.1}
		\left\{\begin{aligned}&\partial_t v+(v \cdot \nabla) v-\operatorname{div}( \mathbf{D}(v)-p \mathbf{I})=(H \cdot \nabla) H-\frac{1}{2} \nabla|H|^2, & \text { in } \Omega(t), \\
			&\partial_t H+(v \cdot \nabla) H=\Delta H+(H \cdot \nabla) v, & \text { in } \Omega(t), \\
			&\operatorname{div} v=0, \quad \operatorname{div} H=0, &  \text { in } \Omega(t),
		\end{aligned}\right.
	\end{align}
	which governs the dynamics of incompressible viscous conducting fluids under magnetic effects and resistivity in an electromagnetic field. Here,   $v = (v_{1},  \dots,  v_{N} )$, $H = (H_{1},  \dots,  H_{N} )$, and $\Omega(t)\in \mathbb{R}^{N}$ represent the fluid
	velocity, the magnetic field, and the evolving domain in $\mathbb{R}^{N}$  with  $N\geq2$, respectively. $\mathbf{D}(v)$ denotes the doubled deformation tensor with component $\partial_i v_j+\partial_j v_i$,  $\mathbf{I}$ is the  identity matrix, and $p$ represents fluid pressure. 
	
	Let  $\Gamma(t)$ denote  the boundary of $\Omega(t)$ and  $\mathbf{n}_{t}$ the outward unit normal to $\Gamma(t)$.
	Due to resistivity, it is natural to impose a zero magnetic field condition on the free boundary. Thus, we consider equations (\ref{1.1}) supplemented with the following initial and boundary conditions:
	\begin{equation}\label{1.2}
		\begin{cases}( \mathbf{D}(v) -P\mathbf{I}) \mathbf{n}_{t}=(H \otimes H ) \mathbf{n}_{t}, & \quad \text { on } \Gamma(t),\\
			H =0, & \quad \text { on } \Gamma(t), \\ 
			v|_{t=0}=v_0,\quad  H|_{t=0}=H_0, & \quad \text { in } \Omega_0, \end{cases}
	\end{equation}
	where  $P=p+\frac{1}{2}|H|^2$ denotes  the total pressure. We do not consider the effects of gravity or surface tension here. Since $H  = 0 $ implies  $(H \otimes H ) \mathbf{n}_{t}=0$ on $\Gamma(t)$,  the boundary conditions in (\ref{1.2}) simplify to
	\begin{equation}\label{1.3}
		( \mathbf{D}(v) -P\mathbf{I}) \mathbf{n}_{t}=0, \quad  H =0, \qquad \text{on}  \qquad  \Gamma(t).
	\end{equation}
	Additionally, in Section \ref{sec5}, we consider the scenario where the magnetic field outside the fluid is a constant vector, as previously studied in \cite{GZN}, rather than $H=0$ on $\Gamma(t)$.

In recent decades, significant strides have been made in establishing the well-posedness of the free boundary problems in fluid dynamics, particularly in the contexts of the incompressible Euler and Navier-Stokes equations, as noted in \cite{Wu2011,Germain2012,Coutand2007,Lindblad2005,Ionescu2015, JPG, JPG1,TS3} and related works.

However, the MHD model holds even greater relevance in physics due to its application in electrically conductive fluids and magnetic fields. Here, we review notable advancements concerning the free boundary problem within the MHD framework. Hao and Luo \cite{Hao2014} contributed significantly by establishing a priori estimates for the free boundary problem in incompressible ideal MHD, particularly under the Taylor-type sign condition. They further demonstrated the local well-posedness of the linearized problem \cite{Hao2021} and highlighted the problem's ill-posed nature in the two-dimensional scenario when the Taylor-type sign condition is violated \cite{Hao2020}.

Lee \cite{DL1} extended this research by deriving uniform estimates for the viscous free boundary MHD problem in infinitely deep domains under constant gravitational forces. Moreover, in domains of finite depth, he demonstrated the local well-posedness for the ideal case, considering both vanishing kinematic viscosity and magnetic diffusivity \cite{DL}. Luo and Zhang \cite{Luo2021} contributed by deriving a priori estimates for the free boundary incompressible ideal MHD equations incorporating surface tension, with local well-posedness established by Gu, Luo, and Zhang \cite{Gu2023}.

Guo, Zeng, and Ni \cite{GZN} explored decay rates in the context of viscous incompressible MHD equations, both with and without surface tension. Sun, Wang, and Zhang \cite{Sun2019} tackled the local well-posedness of the plasma-vacuum problem within the incompressible ideal MHD equations, focusing on the non-collinear stability condition. Wang and Xin \cite{Wang2021} addressed global well-posedness considerations, factoring in magnetic diffusion. Li and Li \cite{Li2022} investigated the local well-posedness of the two-phase problem in incompressible MHD, considering the presence of surface tension. Zhao \cite{Zhao2024} contributed by establishing local well-posedness for incompressible ideal MHD equations using various stability mechanisms in Eulerian coordinates.

Despite these advancements, current literature lacks research specifically on the free boundary incompressible resistive MHD equations within critical spaces and maximal $L^1$-regularity. This gap contrasts with the extensive research on the Navier-Stokes equations \cite{Chen2015,Gallagher2016,TS1,TS2} and related references, which have seen significant exploration in similar contexts.
	
We now introduce Lagrangian coordinates, where coordinates are constant along the integral curves of the velocity vector field, thus fixing the boundary in these coordinates. The Lagrangian transformation is defined as follows:
	\begin{equation}\label{1.4}
		\frac{d x}{d t}=v(t, x(t, y)),\;      x(0,y)=y, \; y\in \Omega_0.
	\end{equation}
		
	 If $ v(x)$ is Lipschitz continuous, the existence and uniqueness of solutions for the ordinary differential equation ensures that (\ref{1.4}) can be uniquely solved by:
	\begin{equation}\label{1.5}
		x(t, y)=y+\int_0^t v(\tau, x(\tau, y)) d \tau.
	\end{equation}
	
	Define
	$$u(t, y) \equiv v(t, x(t, y)), \quad q(t, y) \equiv P(t, x(t, y)),  \quad B(t, y) \equiv H(t, x(t, y)).
	$$
	Let $\frac{dx}{dy}$ denote  the Jacobian matrix of the transformation $x(t, y)=y+\int_0^t u(\tau, y) d \tau$, that is,
	$$\frac{\partial x_i}{\partial y_j}=\delta_{ij}+\int_0^t (Du)_{ij} d\tau,\quad  (Du)_{ij}=\partial_{j}u_{i}. $$
	
	Since $\delta_{j}^{ i}=\frac{\partial y_i}{\partial y_j}=\sum_{k=1}^N \frac{\partial y_i}{\partial x_k} \frac{\partial x_k}{\partial y_j}$,
	where $\partial_{i}:=\partial_{y_i}$, let $A=(\frac{dx}{dy})^{-1}$  denote  the inverse of the Jacobian matrix $\frac{dx}{dy}$, namely
	\begin{equation}\label{1.6}
		A=\left(\begin{array}{cccc}
			\frac{\partial y_1}{\partial x_1} & \frac{\partial y_1}{\partial x_2} & \cdots & \frac{\partial y_1}{\partial x_n} \\
			\frac{\partial y_2}{\partial x_1} & \frac{\partial y_2}{\partial x_2} & \cdots & \frac{\partial y_2}{\partial x_n} \\
			\vdots & \vdots & \cdots & \vdots \\
			\frac{\partial y_n}{\partial x_1} & \frac{\partial y_n}{\partial x_2} & \cdots & \frac{\partial y_n}{\partial x_n}
		\end{array}\right).
	\end{equation}
	Let $(\mathrm{cof}(\frac{dx}{dy}))_{ j}^{i}$ denote the cofactor of $\frac{dx}{dy}$ at position $(i, j)$; from (\ref{1.5}), we have $
	\operatorname{det} (\frac{dx}{dy}) \frac{\partial y_i}{\partial x_j}=(\mathrm{cof}(\frac{dx}{dy}))_{j}^{i}
	$.
	
	Since $\operatorname{ div} v=0$, it follows that $\operatorname{det} (\frac{dx}{dy})=1$. Thus,
	\begin{equation}\label{1.7}
		\begin{aligned}
			A^{i}_{j}& =\frac{\partial y_i}{\partial x_j}=(\operatorname{cof}(\frac{dx}{dy}))_{j}^{i}=(-1)^{i+j} \operatorname{det}
			\left(\begin{array}{cc|c|cc}a_{11} & \cdots & a_{1 i} & \cdots & a_{1 N} \\ \vdots & & \vdots & & \vdots \\ \hline a_{j 1} & \cdots & a_{j i} & \cdots & a_{j N} \\ \hline \vdots & & \vdots & & \vdots \\ a_{N 1} & \cdots & a_{N i} & \cdots & a_{N N}\end{array}\right)
			\\ &=\delta_{ j}^{i}+P_{j}^{i}\left(\int_0^t \nabla_i^{\prime} u_1 d \tau, \ldots, \int_0^t \nabla_i^{\prime} u_{j-1} d \tau, \int_0^t \nabla_i^{\prime} u_{j+1} d \tau, \ldots, \int_0^t \nabla_i^{\prime} u_n d \tau\right),
		\end{aligned}
	\end{equation}
	where $a_{j k}=\frac{\partial x_j}{\partial y_k}$, and  $P_{j}^{i}$ is a polynomial satisfying  $P_{ j}^{i}(0, \ldots, 0)=0$, and
	$$
	\nabla_i^{\prime} u=\left(\frac{\partial u}{  \partial y_1}, \ldots, \frac{\partial u}{ \partial y_{i-1}}, \frac{\partial u}{  \partial y_{i+1}}, \ldots, \frac{\partial u }{ \partial y_N}\right).
	$$
	
 From (\ref{1.7}), we deduce that  $P_{j}^{i}$ is a polynomial of  degree $(N-1)$ without constant terms:
	\begin{equation}\label{1.8}\begin{aligned}
			&P_{j}^{i}\left(\int_0^t \nabla_i^{\prime} u_1 d \tau, \ldots, \int_0^t \nabla_i^{\prime} u_{j-1} d \tau, \int_0^t \nabla_i^{\prime} u_{j+1} d \tau, \ldots, \int_0^t \nabla_i^{\prime} u_N d \tau\right)
			\\&=\sum_{r=1}^{N-1} c_r \prod_{m, \ell \leq N, (m,\ell) \neq(j,i)}^r\left(\int_0^t \partial_{\ell} u_m d \tau\right),
		\end{aligned}
	\end{equation}
	where $c_{r}=1$ or $-1$.
	
	Define the half-space and its boundary:
	\begin{align*}
		\mathbb{R}_{+}^N & \equiv\left\{\left(y^{\prime}, y_n\right):  y^{\prime} \in \mathbb{R}^{N-1}, y_n>0\right\}, \\
		\partial \mathbb{R}_{+}^N & \equiv \mathbb{R}^{N-1} \times\{0\}=\left\{\left(y^{\prime}, y_n\right):  y^{\prime} \in \mathbb{R}^{N-1}, y_n=0\right\}.
	\end{align*}
	We also denote  $\mathbb{R}_{-}^N$ as the negative part of $\mathbb{R}^N$, hence  $\mathbb{R}^N=\mathbb{R}_{+}^N \cup \partial \mathbb{R}_{+}^N \cup \mathbb{R}_{-}^N$.
	Consider the kinematic condition ensuring the free surface remains satisfied, where $\Gamma(t)$ represents the set of points $x=x(t, y)$ for $y \in \Gamma(0)=\partial \mathbb{R}_{+}^N$.

	 Then, equations (\ref{1.1}) and (\ref{1.3}) transform into the following equations
	\begin{equation}\label{1.9}
		\left\{\begin{aligned}
			&\partial_t u-\Delta u+\nabla q=V_{1}(u)+V_{2}(u, q)+V_{3}(u,B),  & &\quad \text{in} \quad (0,+\infty)\times \mathbb{R}_{+}^N ,
			\\ & \partial_t B-\Delta B=V_{4}(u,B), & & \quad \text{in}\quad  (0,+\infty)\times \mathbb{R}_{+}^N ,
			\\
			& \operatorname{div} u=V_{5}(u), \quad \operatorname{div} B=V_{6}(u,B), & & \quad \text{in} \quad (0,+\infty)\times \mathbb{R}_{+}^N ,
			\\
			&\left(\mathbf{D}(u)-q I\right) \mathbf{n}=V_{7}(u)+V_{8}(u, q),& & \quad \text{on}\quad (0,+\infty)\times \partial \mathbb{R}_{+}^N , \\
			&B=0, & & \quad \text{on}\quad  (0,+\infty)\times  \partial \mathbb{R}_{+}^N ,\\
			&u(0, y)=u_0(y), \quad B(0, y)=B_0(y),  & & \quad \text{in}\quad \mathbb{R}_{+}^N,
		\end{aligned}\right.
	\end{equation}
	where $\mathbf{n} = (0,\dots, 0,-1)$  denotes the outward normal. The nonlinear terms of (\ref{1.9}) are expressed as
		\begin{align}\label{1.10}
			& V_1(u)=\operatorname{div}\left(A A^{\top} \nabla u-\nabla u\right), \quad V_2(u, q)=-(A-I)^{\top} \nabla q, \nonumber \\
			& V_3(u, B)=B^i A_{i}^{j} \partial_j B, \quad V_4(u, B)=\operatorname{div}\left(A A^{\top} \nabla B-\nabla B\right)+ B^{i} A_{i}^{k}\partial_k u, \nonumber\\
			& V_5(u)=\left(A_{ i}^{j}-\delta_{i}^{j}\right) \partial_j u^{i}, \quad V_6(u, B)=\left(A_{ i}^{k}-\delta_{i}^{k}\right) \partial_k B^{i},\nonumber \\
			& V_7(u)=-A^{\top} \nabla u+(\nabla u)^{\top} A(A-I)^{\top} \mathbf{n}-\left((A-I)^{\top} \nabla u+(\nabla u)^{\top}(A-I)\right) \mathbf{n}, \nonumber\\
			& V_8(u, q)=q I(A-I)^{\top} \mathbf{n},
		\end{align}
	with $A^{\top}$ denoting the transpose of a matrix $A$,  and  Einstein’s summation convention applied. By (\ref{1.7}) and  the fact in \cite{YS} that $$\sum_{i=1}^N \frac{\partial}{\partial y_i}\left(\frac{\partial y_i}{\partial x_j}\right)=\sum_{i=1}^{N}\frac{\partial}{\partial y_i}P^{i}_{j}=0,$$
	we have
		\begin{align}\label{1.11}
			\operatorname{div}\left(A A^{\top} \nabla u-\nabla u\right) & =\sum_{i=1}^N\sum_{k=1}^N \partial_i\left(A_k^i A_k^j \partial_j u-\partial_i u\right) \nonumber \\
			& =\sum_{i=1}^N\sum_{k=1}^N \partial_i\left(\left(\delta_k^i+P_k^i\right)\left(\delta_k^j+P_k^j\right) \partial_j u-\partial_i u\right)\nonumber \\
			& =\sum_{i=1}^N\sum_{k=1}^N \partial_i\left(P_k^i P_k^j \partial_j u
			+\delta_{k}^{i}P_{k}^{j}\partial_{j}u+P_{k}^{i}\delta_{k}^{j}\partial_{j}u\right)\nonumber
			\\&=\sum_{k=1}^N P_k^i \partial_i\left( P_k^j \partial_j u\right)+\sum_{i=1}^N \partial_{i}(P_{i}^{j}\partial_{j}u)+\sum_{j=1}^NP_{j}^{i}\partial_{i}\partial_{j}u,
		\end{align}
	thus, we can reduce (\ref{1.10}) to
		\begin{align}\label{1.12}
			& V_1(u)=\sum_{k=1}^N P_k^i \partial_i\left( P_k^j \partial_j u\right)+\sum_{j=1}^NP_{j}^{i}\partial_{i}\partial_{j}u
			+\sum_{i=1}^N \partial_{i}(P_{i}^{j}\partial_{j}u), \quad V_{2i}(u, q)=-P^{j}_{i} \partial_{j}q, \nonumber\\
			&  V_3(u, B)=B^{i}\partial_{i}B+B^{i}P_{i}^{j}\partial_{j}B,\nonumber \\&
			V_4(u, B)=\sum_{k=1}^N P_k^i \partial_i\left( P_k^j \partial_j B\right)+\sum_{j=1}^NP_{j}^{i}\partial_{i}\partial_{j}B
			+\sum_{i=1}^N \partial_{i}(P_{i}^{j}\partial_{j}B)+ B^{i}\partial_{i}u+B^{i}P_{i}^{j}\partial_{j}u, \nonumber\\
			& V_5(u)=P^{j}_{i} \partial_j u^{i}, \quad V_6(u, B)= P^{k}_{i} \partial_k B^{i}, \nonumber\\
			& V_7(u)=-A^{\top} \nabla u+(\nabla u)^{\top} A(A-I)^{\top} \mathbf{n}-\left((A-I)^{\top} \nabla u+(\nabla u)^{\top}(A-I)\right) \mathbf{n},\nonumber \\
			& V_8(u, q)=q I(A-I)^{\top} \mathbf{n}.
		\end{align}

	We denote $\mathbb{R}_{+}=(0, \infty)$ as the half real line and $\overline{\mathbb{R}_{+}}=[0, \infty)$ as its closure. Let $C(I;  X)$ denote the set of all bounded continuous functions from an interval $I$ to a Banach space $X$.
	
	Now, the scaling invariance for (\ref{1.1}) reads, for all $\lambda > 0$,
	$$
	v \rightarrow  \lambda v\left(\lambda^2 t, \lambda x\right), \quad
	P \rightarrow \lambda^2 P\left(\lambda^2 t, \lambda x\right), \quad
	H \rightarrow  \lambda H\left(\lambda^2 t, \lambda x\right),
	$$
	which roughly implies that the critical spaces for the velocity, total pressure, and magnetic field remain the same as in the homogeneous case. It is well known from \cite{HT} that the incompressible Navier-Stokes equations can be globally solved in time in the invariant Bochner-Sobolev space $L^\rho\left(\mathbb{R}_{+};  \dot{H}_p^s\left(\mathbb{R}^N;  \mathbb{R}^N\right)\right)$ where $	\frac{2}{\rho}+\frac{N}{p}=1+s$.
	
	The main results are stated as follows, with details of the functional spaces used provided in the appendix.
	
	\begin{theorem}\label{THM1}
		Let $p\in [N, 2N-1)$. Suppose the initial data $u_0, B_{0} \in \dot{B}_{p, 1}^{-1+N / p}(\mathbb{R}_{+}^N)$  satisfy
		$$
		\left\|u_0\right\|_{\dot{B}_{p, 1}^{-1+\frac{N}{p}}(\mathbb{R}_{+}^N)}+\left\|B_0\right\|_{\dot{B}_{p, 1}^{-1+\frac{N}{p}}(\mathbb{R}_{+}^N)} \leq \epsilon_{1},
		$$
		for some small $\epsilon_{1} > 0$, and $\operatorname{div} u_{ 0}=\operatorname{div} B_{ 0}
		= 0 $ in the sense of distribution. Then equation (\ref{1.9}) admits a unique global solution $(u,q,B)$ satisfying the following properties:
	\begin{align*}
			& u,B \in C\left(\overline{\mathbb{R}_{+}};  \dot{B}_{p, 1}^{-1+\frac{N}{p}}(\mathbb{R}_{+}^N)\right) \cap \dot{W}^{1,1}\left(\mathbb{R}_{+};  \dot{B}_{p, 1}^{-1+\frac{N}{p}}(\mathbb{R}_{+}^N)\right), \\
			& D^{2} u, D^{2} B, \nabla q \in L^1\left(\mathbb{R}_{+};  \dot{B}_{p, 1}^{-1+\frac{N}{p}}(\mathbb{R}_{+}^N)\right), \\
			& \left.q\right|_{y_N=0} \in \dot{F}_{1,1}^{\frac{1}{2}-\frac{1}{2 p}}\left(\mathbb{R}_{+};  \dot{B}_{p, 1}^{-1+\frac{N}{p}}\left(\mathbb{R}^{N-1}\right)\right) \cap L^1\left(\mathbb{R}_{+};  \dot{B}_{p, 1}^{\frac{N-1}{p}}\left(\mathbb{R}^{N-1}\right)\right),
		\end{align*}
		with the estimates
			\begin{align}\label{1.13}
				& \left\|\partial_t u\right\|_{L^1(\mathbb{R}_{+};  \dot{B}_{p, 1}^{-1+\frac{N}{p}}(\mathbb{R}_{+}^N))}+\left\|\partial_t B\right\|_{L^1(\mathbb{R}_{+};  \dot{B}_{p, 1}^{-1+\frac{N}{p}}(\mathbb{R}_{+}^N))}+\left\|D^2 u\right\|_{L^1(\mathbb{R}_{+};  \dot{B}_{p, 1}^{-1+\frac{N}{p}}(\mathbb{R}_{+}^N))}\nonumber\\
				& +\left\|D^2 B\right\|_{L^1(\mathbb{R}_{+};  \dot{B}_{p, 1}^{-1+\frac{N}{p}}(\mathbb{R}_{+}^N))}+\|\nabla q\|_{L^1(\mathbb{R}_{+};  \dot{B}_{p, 1}^{-1+\frac{N}{p}}(\mathbb{R}_{+}^N))}\nonumber \\&+\left\|\left.q\right|_{y_N=0}\right\|_{\dot{F}_{1,1}^{\frac{1}{2}-\frac{1}{2 p}}(\mathbb{R}_{+};  \dot{B}_{p, 1}^{-1+\frac{N}{p}}(\mathbb{R}^{N-1}))}+\left\|\left.q\right|_{y_N=0}\right\|_{L^1(\mathbb{R}_{+};  \dot{B}_{p, 1}^{\frac{N-1}{p}}(\mathbb{R}^{N-1}))} \leq \epsilon,
			\end{align}
		where $D^2 $ denotes all the second-order derivatives with respect to $y$, and $\epsilon$ is a constant depending only on $N, p$ and $\epsilon_{1}$.
	\end{theorem}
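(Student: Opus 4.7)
The plan is to solve \eqref{1.9} as a perturbation of its linearization at the origin via the Banach contraction principle, in a solution space whose norm is the left-hand side of \eqref{1.13}. Let $E_\epsilon$ denote the closed ball of radius $\epsilon$ in this space. For each $(\tilde u,\tilde q,\tilde B)\in E_\epsilon$ I would define $\Phi(\tilde u,\tilde q,\tilde B)=(u,q,B)$ as the unique solution of the \emph{linear} problem obtained by freezing the right-hand side of \eqref{1.9}:
\[
\partial_t u-\Delta u+\nabla q=V_1(\tilde u)+V_2(\tilde u,\tilde q)+V_3(\tilde u,\tilde B),\quad \operatorname{div} u=V_5(\tilde u),
\]
\[
(\mathbf{D}(u)-qI)\mathbf{n}=V_7(\tilde u)+V_8(\tilde u,\tilde q)\text{ on }\partial\mathbb{R}_+^N,\qquad u|_{t=0}=u_0,
\]
coupled with the heat--Dirichlet problem for $B$ driven by $V_4(\tilde u,\tilde B)$ (and with $V_6$ absorbed as an inhomogeneous-divergence constraint handled by the same device as $V_5$).

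The first step is the linear theory: I would invoke maximal $L^1$-regularity in the critical space $\dot B^{-1+N/p}_{p,1}(\mathbb{R}_+^N)$ for (a) the Stokes system without surface stress but with inhomogeneous divergence and prescribed Neumann-type stress on $\partial\mathbb{R}_+^N$, and (b) the heat equation with homogeneous Dirichlet boundary condition. Both estimates are the extensions of Danchin--Mucha and Ogawa--Shimizu that the abstract advertises and should deliver $\partial_t u, D^2u, \nabla q, \partial_t B, D^2 B$ in $L^1(\mathbb{R}_+;\dot B^{-1+N/p}_{p,1})$ together with the sharp trace norm $\dot F^{1/2-1/(2p)}_{1,1}(\mathbb{R}_+;\dot B^{-1+N/p}_{p,1}(\mathbb{R}^{N-1}))\cap L^1(\mathbb{R}_+;\dot B^{(N-1)/p}_{p,1}(\mathbb{R}^{N-1}))$ for $q|_{y_N=0}$, linearly in the data and source terms.

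The second step is the nonlinear estimates. Formulas \eqref{1.5}--\eqref{1.8} express the entries of $A-I$ and $P^i_j$ as polynomials without constant term in $\int_0^t\nabla u\,d\tau$, and one has $\|\int_0^t\nabla u\,d\tau\|_{L^\infty_t(\dot B^{N/p}_{p,1})}\lesssim\|D^2 u\|_{L^1_t(\dot B^{-1+N/p}_{p,1})}\leq\epsilon$; since the restriction $p\in[N,2N-1)$ makes $\dot B^{N/p}_{p,1}$ a multiplier algebra on the ambient critical space and keeps trace spaces in an admissible range, each $V_i$ can be bounded by a quadratic-or-higher expression in $\|(\tilde u,\tilde q,\tilde B)\|_{E_\epsilon}$. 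Combining this with the linear estimate yields $\|\Phi(\tilde u,\tilde q,\tilde B)\|_{E_\epsilon}\leq C(\epsilon_1+\epsilon^2)$, which is at most $\epsilon$ provided $\epsilon_1$ and $\epsilon$ are small. The contraction estimate is obtained by differencing: each $V_i(\tilde u_1,\cdot)-V_i(\tilde u_2,\cdot)$ splits into multilinear pieces with at least one small factor, giving $\|\Phi(\tilde u_1,\cdot)-\Phi(\tilde u_2,\cdot)\|_{E_\epsilon}\leq C\epsilon\,\|(\tilde u_1-\tilde u_2,\cdot)\|_{E_\epsilon}$.

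The main obstacle I expect is the boundary terms $V_7(\tilde u)$ and $V_8(\tilde u,\tilde q)$: because the pressure trace is measured in the anisotropic Triebel--Lizorkin space $\dot F^{1/2-1/(2p)}_{1,1}(\mathbb{R}_+;\dot B^{-1+N/p}_{p,1}(\mathbb{R}^{N-1}))$ rather than in a plain $L^1$, the nonlinear estimate must respect both time-fractional regularity at the boundary and tangential Besov regularity. Controlling the products of $(A-I)^\top$ with $\tilde q$ or with $\nabla\tilde u$ in that norm requires paraproduct estimates in mixed anisotropic Besov--Triebel spaces, together with the same trace characterization used to close the linear theory. Once these are in hand, the fixed point furnishes the unique global solution; continuity in time of $(u,B)$ with values in $\dot B^{-1+N/p}_{p,1}$ is automatic from $\partial_t u,\partial_t B\in L^1_t(\dot B^{-1+N/p}_{p,1})$, and uniqueness follows from the contraction bound applied to any two solutions in $E_\epsilon$.
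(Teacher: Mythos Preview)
Your proposal is correct and follows essentially the same approach as the paper: define a closed ball in the solution space whose norm is the left-hand side of \eqref{1.13}, freeze the nonlinearities $V_1,\dots,V_8$ to define the iteration map via the linear Stokes and heat-Dirichlet problems, apply the maximal $L^1$-regularity estimates (Theorem~\ref{thm2.1} and \eqref{2.16}) together with the multilinear product bounds for the $V_i$ (Section~\ref{sec3} and Proposition~\ref{prop1}), and close by Banach's fixed point theorem; you also correctly anticipate that the boundary terms $V_7,V_8$ in the anisotropic $\dot F^{1/2-1/(2p)}_{1,1}$ norm are the most delicate part.
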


The structure of this paper is outlined as follows.	Our proof of Theorem \ref{THM1} strongly relies on the end-point estimate of maximal regularity for the initial-boundary value problem of the resistive MHD in the half-space $\mathbb{R}_{+}^{N}$. Thus, in Section \ref{sec2}, we recall the fact that the maximal $L^{1}$-regularity theorem for the Stokes equations which have been obtained in \cite{TS} and prove $L^{1}$-regularity theorem for heat equations with zero boundary conditions. Then, Section \ref{sec3} focuses on estimating the nonlinear terms appearing in the resistive MHD equations.
In Section \ref{sec4}, we derive estimates for the difference of nonlinear terms and utilize the contraction mapping principle to establish the global well-posedness of the resistive MHD equations.
Section \ref{sec5} extends the results of Section \ref{sec4} to consider the case where a constant magnetic field is present outside the fluid, using similar analytical techniques.
Finally, in the Appendix, we gather and define important notations used throughout the paper, and summarize key results for the reader's convenience.

	\section{ Linearization Theory}\label{sec2}
	We investigate  maximal $L^{1}$-regularity to two linearized equations: the  Stokes equations with transmission conditions on $\partial\mathbb{R}_{+}^{N}$,  and the
	other is the heat equations subject to zero magnetic boundary conditions.
	
	\subsection{Maximal regularity for the generalized Stokes equations in the half-space}
	
	This subsection is devoted to presenting maximal $L^{1}$-regularity for the Stokes equations corresponding (\ref{1.9}) with inhomogeneous free  boundary condition:
	\begin{equation}\label{2.1}
		\left\{\begin{aligned}
			&\partial_t u-\Delta u+\nabla q=F_{1},  & &\quad \text{in} \quad \mathbb{R}_{+}\times \mathbb{R}_{+}^N ,
			\\
			& \operatorname{div} u=F_{2},  & & \quad \text{in} \quad \mathbb{R}_{+}\times \mathbb{R}_{+}^N ,
			\\
			&\left(\mathbf{D}(u)-q I\right) \mathbf{n}=F_{3},& & \quad \text{on}\quad \mathbb{R}_{+}\times \partial \mathbb{R}_{+}^N, \\
			&u(0, y)=u_0(y),  & & \quad \text{in}\quad \mathbb{R}_{+}^N,
		\end{aligned}\right.
	\end{equation}
	where $\mathbf{n} = (0,\dots, 0,-1)$, $F_{1}$, $F_{2}$ and $F_{3}$ are given functions.  We can obtain the following maximal $L^1$-regularity result.
	
	\begin{theorem}[cf. \cite{TS}]\label{thm2.1}  Let $1<p<\infty$ and $-1+1 / p<s \leq 0$.  Let $u_0 \in \dot{B}_{p, 1}^s(\mathbb{R}_{+}^N)$  be initial data for equations (\ref{2.1}) with $\operatorname{div} u_0=\left.F_{2}\right|_{t=0}$  in  the sense of distribution and let $F_{1}$, $F_{2}$ and $F_{3}$ be functions appearing on the right side of equations (\ref{2.1}) and satisfying the conditions:
		\begin{align*}
			&F_{1} \in L^1\left(\mathbb{R}_{+};  \dot{B}_{p, 1}^s(\mathbb{R}_{+}^N)\right), \quad
			\nabla F_{2} \in L^1\left(\mathbb{R}_{+};  \dot{B}_{p, 1}^s(\mathbb{R}_{+}^N)\right), \quad\nabla(-\Delta)^{-1} F_{2} \in \dot{W}^{1,1}\left(\mathbb{R}_{+};  \dot{B}_{p, 1}^s(\mathbb{R}_{+}^N)\right), \\
			& F_{3} \in \dot{F}_{1,1}^{\frac{1}{2}-\frac{1}{2 p}}\left(\mathbb{R}_{+};  \dot{B}_{p, 1}^s\left(\mathbb{R}^{N-1}\right)\right) \cap L^1\left(\mathbb{R}_{+};  \dot{B}_{p, 1}^{s+1-\frac{1}{p}}\left(\mathbb{R}^{N-1}\right)\right).
		\end{align*}
		
		Then, problem (\ref{2.1}) admits a unique solution $(u, q)$ with
		\begin{align*}
			& u \in C\left(\overline{\mathbb{R}_{+}};  \dot{B}_{p, 1}^s(\mathbb{R}_{+}^N)\right) \cap \dot{W}^{1,1}\left(\mathbb{R}_{+};  \dot{B}_{p, 1}^s(\mathbb{R}_{+}^N)\right), \quad D^{2} u, \nabla q \in L^1\left(\mathbb{R}_{+};  \dot{B}_{p, 1}^s(\mathbb{R}_{+}^N)\right), \\
			& \left.q\right|_{y_n=0} \in \dot{F}_{1,1}^{\frac{1}{2}-\frac{1}{2 p}}\left(\mathbb{R}_{+};  \dot{B}_{p, 1}^s\left(\mathbb{R}^{N-1}\right)\right) \cap L^1\left(\mathbb{R}_{+};  \dot{B}_{p, 1}^{s+1-\frac{1}{p}}\left(\mathbb{R}^{N-1}\right)\right),
		\end{align*}
		possessing the estimates
			\begin{align*}%\label{2.2}
				& \left\|\partial_t u\right\|_{L^1\left(\mathbb{R}_{+};  \dot{B}_{p, 1}^s(\mathbb{R}_{+}^N)\right)}+\left\|D^2 u\right\|_{L^1(\mathbb{R}_{+};  \dot{B}_{p, 1}^s(\mathbb{R}_{+}^N))}+\|\nabla p\|_{L^1(\mathbb{R}_{+};  \dot{B}_{p, 1}^s(\mathbb{R}_{+}^N))}\\
				& +\left\|\left.q\right|_{y_n=0}\right\|_{\dot{F}_{1,1}^{\frac{1}{2}-\frac{1}{2 p}}(\mathbb{R}_{+};  \dot{B}_{p, 1}^s\left(\mathbb{R}^{N-1}\right))}+\left\|\left.q\right|_{y_n=0}\right\|_{L^1(\mathbb{R}_{+};  \dot{B}_{p, 1}^{s+1-\frac{1}{p}}\left(\mathbb{R}^{N-1}\right))} \\
				& \leq C\bigg(\left\|u_0\right\|_{\dot{B}_{p, 1}^s(\mathbb{R}_{+}^N)}+\|F_{1}\|_{L^1(\mathbb{R}_{+};  \dot{B}_{p, 1}^s(\mathbb{R}_{+}^N))}
				+\|\nabla F_{2}\|_{L^1\left(\mathbb{R}_{+};  \dot{B}_{p, 1}^s(\mathbb{R}_{+}^N)\right)}\\
				&\qquad\qquad+\left\|\partial_t \nabla(-\Delta)^{-1} F_{2}\right\|_{L^1(\mathbb{R}_{+};  \dot{B}_{p, 1}^s(\mathbb{R}_{+}^N))}+\|F_{3}\|_{\dot{F}_{1,1}^{\frac{1}{2}-\frac{1}{2 p}}\left(\mathbb{R}_{+};  \dot{B}_{p, 1}^s\left(\mathbb{R}^{N-1}\right)\right)}\\&\qquad\qquad +\|F_{3}\|_{L^1(\mathbb{R}_{+};  \dot{B}_{p, 1}^{s+1-\frac{1}{p}}\left(\mathbb{R}^{N-1}\right))}\bigg),
			\end{align*}
		where $C>0$  is a  constant  depending only on $p, s$ and $N$.
	\end{theorem}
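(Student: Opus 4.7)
The plan is to reduce \eqref{2.1} to a collection of simpler subproblems by linearity and then apply a Littlewood--Paley decomposition together with the Fourier--Laplace representation of the half-space Stokes resolvent to obtain the endpoint $L^{1}$-in-time maximal regularity. Since Theorem \ref{thm2.1} is essentially the result of \cite{TS} in our notation, my intention is to follow that strategy; in the body of the paper one only needs to check that the hypotheses placed on $F_{1}, F_{2}, F_{3}$ indeed match those of \cite{TS} after the reductions below.

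First I would reduce to the divergence-free case. Setting $w := \nabla(-\Delta)^{-1}F_{2}$ and $\tilde u := u - w$, one obtains $\operatorname{div}\tilde u = 0$ in $\mathbb{R}_{+}^{N}$, and $(\tilde u, q)$ solves a Stokes system with modified interior forcing $F_{1} - \partial_{t} w + \Delta w$, modified boundary datum $F_{3} - \mathbf{D}(w)\mathbf{n}$, and modified initial value $u_{0} - w|_{t=0}$. The hypotheses $\nabla F_{2} \in L^{1}(\mathbb{R}_{+};\dot B_{p,1}^{s})$ and $\partial_{t}\nabla(-\Delta)^{-1}F_{2} \in L^{1}(\mathbb{R}_{+};\dot B_{p,1}^{s})$ are precisely what is needed to place the new interior forcing in $L^{1}_{t}\dot B_{p,1}^{s}$ and, through the standard trace theorem, the new boundary datum in the prescribed Triebel--Lizorkin/Besov space. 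I would then split the resulting divergence-free problem into three elementary pieces: (i) the pure initial-value problem with vanishing forcing and boundary data; (ii) the interior forcing problem with zero initial data and zero boundary data; and (iii) the boundary-data problem with zero initial data and zero interior forcing.

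For (i) and (ii) the argument proceeds via the half-space Stokes semigroup $e^{-tA}$: the analyticity estimate $\|A e^{-tA}\Delta_{j}f\|_{\dot B_{p,1}^{s}} \lesssim 2^{2j} e^{-c 2^{2j} t}\|\Delta_{j}f\|_{\dot B_{p,1}^{s}}$ on each Littlewood--Paley block yields a $j$-uniform bound for its time integral, so the $\ell^{1}(j)$-summation built into the norm of $\dot B_{p,1}^{s}$ delivers the endpoint $L^{1}$-in-time control; problem (ii) is then handled by Duhamel with the same dyadic estimate. The hard part will be (iii), namely constructing the velocity and pressure generated by $F_{3}$ and verifying the sharp trace regularity of $q$. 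Standard operator-valued Fourier multiplier theorems break down at the $L^{1}$ endpoint because $L^{1}$ is not a UMD space; the remedy is to use the explicit Fourier--Laplace representation of the resolvent in the half-space, localize dyadically in the tangential frequency $\xi'$ and in time, derive block-wise $L^{1}_{t}L^{p}_{y}$ bounds with constants uniform in the dyadic level by exploiting the parabolic scaling $\tau \sim |\xi'|^{2} \sim 2^{2j}$, and close by the $\ell^{1}$-summation. The Triebel--Lizorkin index $\dot F_{1,1}^{1/2-1/(2p)}$ in time arises as the endpoint trace space for the heat half-line with data in $L^{1}_{t}\dot B_{p,1}^{s+1-1/p}$, which explains why an $F$-scale appears in time while a $B$-scale appears in space. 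Uniqueness then follows by applying the same estimate to the difference of two solutions with vanishing data.
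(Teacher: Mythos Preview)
Your proposal correctly identifies that Theorem~\ref{thm2.1} is the maximal $L^{1}$-regularity result of \cite{TS}, and your outlined strategy (reduction to the divergence-free case via $w=\nabla(-\Delta)^{-1}F_{2}$, then splitting into initial-value, interior-forcing, and boundary-data subproblems, with the endpoint estimate obtained through Littlewood--Paley localization and the explicit Fourier--Laplace representation of the half-space Stokes resolvent) is precisely the approach of that reference. Note, however, that the present paper does not give its own proof of this theorem at all: it is simply quoted from \cite{TS} and used as a black box, so there is no ``paper's proof'' to compare against beyond the citation itself.
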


	\subsection{Maximal regularity for the heat equations with zero magnetic boundary condition in the half-space}
	
	This subsection is devoted to presenting the maximal $L^{1}$ regularity for the heat
	equations with zero boundary conditions:
	\begin{equation}\label{2.3}
		\left\{\begin{aligned}
			& \partial_t B-\Delta B=f,  & &\quad \text{in} \quad \mathbb{R}_{+}\times \mathbb{R}_{+}^N ,
			\\
			&\operatorname{div} B=g, & &\quad \text{in} \quad \mathbb{R}_{+}\times \mathbb{R}_{+}^N ,
			\\
			& B=0, & & \quad \text{on}\quad \mathbb{R}_{+}\times \partial \mathbb{R}_{+}^N,\\
			& B(0, y)=B_0(y), & &\quad \text{in} \quad \mathbb{R}_{+}^N.
		\end{aligned}\right.
	\end{equation}
	
	Firstly, we shall remove the divergence data. Introducing the even extension of divergence data $g$ with respect to $y_N$:
	\begin{equation}\label{2.4}
		\bar{g}(t, y)=\left\{
		\begin{aligned}
			&g\left(t, y^{\prime}, y_n\right), &   y_N>0, \\
			&g\left(t, y^{\prime},-y_n\right), &   y_N<0,
		\end{aligned}\right.
	\end{equation}
	with $y^{\prime}=\left(y_1, y_2, \cdots, y_{N-1}\right)$, we consider the following problem
	\begin{equation}\label{2.5}
		\left\{
		\begin{aligned}
			&-\Delta \rho=\bar{g}, & t>0, \quad  &  y \in \mathbb{R}^N, \\
			&\rho|_{y_N=0}=0, &  t>0, \quad &  y^{\prime} \in \mathbb{R}^{N-1}.
		\end{aligned}\right.
	\end{equation}
	Here, we assume $\underset{{t \rightarrow \infty}}\lim  \rho\left(t, y^{\prime}, y_{N}\right)=0$ for almost everywhere $\left(y^{\prime}, y_{N}\right) \in \mathbb{R}^{N-1} \times \mathbb{R}_{+}$, one solution of (\ref{2.5}) is given by the Newtonian potential $\rho=(-\Delta)^{-1} \bar{g} \equiv \Gamma * \bar{g}$ with the Newtonian kernel $\Gamma$ in $\mathbb{R}^N$
	$$
	\Gamma(y)=\left\{
	\begin{aligned}
		&\frac{1}{2 \pi} \log |y|^{-1}, & N=2, \\
		&\left((N-2) \omega_N\right)^{-1}|y|^{2-N}, & N \geq 3,
	\end{aligned}
	\right.
	$$
	where $\omega_N$ is the surface area of the unit sphere in $\mathbb{R}^N$.
	Then for $1<p<\infty$ and $-1+1 / p<s<1 / p$,  $\nabla \rho$ satisfies the estimate
	\begin{equation}\label{2.6}
		\left\{\begin{aligned}
			&\left\|\nabla^3 \rho\right\|_{L^1(\mathbb{R}_{+};  \dot{B}_{p, 1}^s(\mathbb{R}_{+}^N))} \leq C\|\nabla g\|_{L^1(\mathbb{R}_{+};  \dot{B}_{p, 1}^s(\mathbb{R}_{+}^N))}, \\
			&\left\|\partial_t \nabla \rho\right\|_{L^1(\mathbb{R}_{+};  \dot{B}_{p, 1}^s(\mathbb{R}_{+}^N))} \leq C\left\|\partial_t(-\Delta)^{-1} \nabla g\right\|_{L^1(\mathbb{R}_{+};  \dot{B}_{p, 1}^s(\mathbb{R}_{+}^N))}.
		\end{aligned}\right.
	\end{equation}
	
	Indeed, the corresponding estimate to (\ref{2.6}) in $\mathbb{R}^{N}$ follows directly from the elliptic estimate for the Poisson equation  and hence  estimate (\ref{2.6}) in the half-space naturally follows from the definition of the Besov space in $\mathbb{R}^{N}_{+}$.
	
	\begin{remark}
		For $1<p<\infty$ and $-1+1 / p<s<1 / p$, let 
		$$B \in C\left(\overline{\mathbb{R}_{+}};  \dot{B}_{p, 1}^s(\mathbb{R}_{+}^N)\right) \cap\dot{W}^{1,1}\left(\mathbb{R}_{+};  \dot{B}_{p, 1}^s(\mathbb{R}_{+}^N)\right),\; D^{2} B \in L^1\left(\mathbb{R}_{+};  \dot{B}_{p, 1}^s(\mathbb{R}_{+}^N)\right).$$ 
		From Theorem \ref{thmA1},  we have
		$$
		\lim _{t \rightarrow \infty} B\left(t, y^{\prime}, y_{N}\right)=0, \quad \text { a.e. }\left(y^{\prime}, y_{N}\right) \in \mathbb{R}^{N-1} \times \mathbb{R}_{+}.
		$$
		Thus, it is reasonable to assume $\underset{t \rightarrow \infty}\lim \rho\left(t, y^{\prime}, y_{N}\right)=0$ for a.e. $\left(y^{\prime}, y_{N}\right) \in \mathbb{R}^{N-1} \times \mathbb{R}_{+}$.
	\end{remark}
	Then, let $w=B+\left.\nabla \rho\right|_{\mathbb{R}_{+}^N}$, the  function $w$ satisfies the equations
	\begin{equation}\label{2.7}
		\left\{\begin{aligned}
			& \partial_t w-\Delta w=f+\left.\left(\partial_t \nabla \rho-\Delta \nabla \rho\right)\right|_{y_N>0}, & &\quad \text{in} \quad \mathbb{R}_{+}\times \mathbb{R}_{+}^N ,\\
			&\operatorname{div} w=0, & &\quad \text{in} \quad \mathbb{R}_{+}\times \mathbb{R}_{+}^N , \\
			& w=B+\nabla \rho, & &\quad \text{on}\quad \mathbb{R}_{+}\times \partial \mathbb{R}_{+}^N, \\
			&w(0, y)=B_0(y)+\left.\nabla \rho(0, y)\right|_{y_N>0}, & &\quad \text{in} \quad \mathbb{R}_{+}^N.
		\end{aligned}\right.
	\end{equation}
In order to extend them into $\mathbb{R}^{N}$, we consider the following  Cauchy problem via the even extension:
	\begin{equation}\label{2.8}
		\left\{\begin{aligned}
			& \partial_t \bar{B}-\Delta \bar{B}=\bar{f}+\partial_t \nabla\rho-\Delta \nabla\rho, & &\quad \text{in} \quad \mathbb{R}_{+}\times \mathbb{R}^N , \\
			&\operatorname{div} \bar{B}=0, & &\quad \text{in} \quad \mathbb{R}_{+}\times \mathbb{R}^N , \\
			& \left.\bar{B}\right|_{t=0}=\bar{B}_{0}(y)+\nabla \rho(0,y), & &\quad \text{in} \quad \mathbb{R}^N.
		\end{aligned}\right.
	\end{equation}
	For the initial data $B_{0}(y)$,  we also employ the same extension to get $\bar{B}_{0}(y)$. Then it is known that the solution $\bar{B}$ of equation (\ref{2.8}) satisfies maximal $L^1$-regularity. Indeed, for any $-1+1 / p<s<1 / p$ and $1<p<\infty$,  by \cite{RP}, we have
		\begin{align}\label{2.9}
			&\left\|\partial_t \bar{B}\right\|_{L^1(\mathbb{R}_{+};  \dot{B}_{p, 1}^s\left(\mathbb{R}^N\right))}+\left\|D^2 \bar{B}\right\|_{L^1(\mathbb{R}_{+};  \dot{B}_{p, 1}^s\left(\mathbb{R}^N\right))}\nonumber \\
			\leq& C\left(\left\|\bar{B}_0\right\|_{\dot{B}_{p, 1}^s\left(\mathbb{R}^N\right)}+\|\bar{f}\|_{L^1(\mathbb{R}_{+};  \dot{B}_{p, 1}^s\left(\mathbb{R}^N\right))}+\|\partial_t \nabla\rho-\Delta \nabla\rho\|_{L^1(\mathbb{R}_{+};  \dot{B}_{p, 1}^s\left(\mathbb{R}^N\right))}\right).
		\end{align}
	Restricting the solution $\bar{B}$ to the half-space $\mathbb{R}_{+}^N$, we directly obtain from (\ref{2.6}) and (\ref{2.9}) that
		\begin{align}\label{2.10}
			&\left\|\partial_t \bar{B}\right\|_{L^1(\mathbb{R}_{+};  \dot{B}_{p, 1}^s(\mathbb{R}_{+}^N))}+  \left\|D^2 \bar{B}\right\|_{L^1(\mathbb{R}_{+};  \dot{B}_{p, 1}^s(\mathbb{R}_{+}^N))} \nonumber\\
			\leq & C\left(\left\|B_0\right\|_{\dot{B}_{p, 1}^s(\mathbb{R}_{+}^N)}+\|f\|_{L^1(\mathbb{R}_{+};  \dot{B}_{p, 1}^s(\mathbb{R}_{+}^N))}\right. \nonumber\\
			& \left.\qquad+\|\nabla g\|_{L^1(\mathbb{R}_{+};  \dot{B}_{p, 1}^s(\mathbb{R}_{+}^N))}+\left\|\partial_t \nabla(-\Delta)^{-1} g\right\|_{L^1(\mathbb{R}_{+};  \dot{B}_{p, 1}^s(\mathbb{R}_{+}^N))}\right).
		\end{align}

	Finally, let $ \mathbf{B}=w-\left.\bar{B}\right|_{y_n>0} \equiv B+\left.\nabla \rho\right|_{y_n>0}-\left.\bar{B}\right|_{y_n>0}$. We reduce the original problem into the following initial-boundary value problem for $\mathbf{B}$ :
	\begin{equation}\label{2.11}
		\left\{\begin{aligned}
			&\partial_t \mathbf{B}-\Delta \mathbf{B}=0, & &\quad \text{in} \quad \mathbb{R}_{+}\times \mathbb{R}_{+}^N , \\
			& \operatorname{div}  \mathbf{B}=0, & &\quad \text{in} \quad \mathbb{R}_{+}\times \mathbb{R}_{+}^N , \\
			& \mathbf{B}=\nabla \rho+\bar{B},  & &\quad \text{on} \quad \mathbb{R}_{+}\times \partial\mathbb{R}_{+}^N , \\
			&\mathbf{B}(0, y)=0, & &\quad \text{in} \quad \mathbb{R}_{+}^N.
		\end{aligned}\right.
	\end{equation}
	Then,  we consider the following equations to get the maximal $L^1$-regularity for (\ref{2.11}).
	\begin{equation}\label{2.12}
		\left\{\begin{aligned}
			&\partial_t b-\Delta b=f, & &\quad \text{in} \quad \mathbb{R}_{+}\times \mathbb{R}_{+}^N , \\
			& b\left(t, y^{\prime}, y_n\right)|_{y_n=0}=h\left(t, y^{\prime}\right),& &\quad \text{on} \quad \mathbb{R}_{+}\times \partial\mathbb{R}_{+}^N , \\
			&b(0, y)=b_{0}, & &\quad \text{in} \quad \mathbb{R}_{+}^N.
		\end{aligned}\right.
	\end{equation}

	Indeed, we have the following  the maximal theorem  for the Stokes equations with boundary conditions.
	\begin{theorem}[cf. \cite{TS1}]\label{thm2.3}
		Let $1 < p <\infty$ and $-1 + 1/p < s \leq 0$. (\ref{2.12}) admits a unique solution
		$$
		b \in \dot{W}^{1,1}\left(\mathbb{R}_{+};  \dot{B}_{p, 1}^s(\mathbb{R}_{+}^N)\right), \quad D^{2} b \in L^1\left(\mathbb{R}_{+};  \dot{B}_{p, 1}^s(\mathbb{R}_{+}^N)\right)
		$$
		if and only if the external, initial and boundary data in (\ref{2.12}) satisfy
		\begin{align*}
			& f \in L^1\left(\mathbb{R}_{+};  \dot{B}_{p, 1}^s(\mathbb{R}_{+}^N)\right), \quad b_0 \in \dot{B}_{p, 1}^s(\mathbb{R}_{+}^N), \\
			&  h \in \dot{F}_{1,1}^{1-1 / 2 p}\left(\mathbb{R}_{+};  \dot{B}_{p, 1}^s\left(\mathbb{R}^{N-1}\right)\right) \cap L^1\left(\mathbb{R}_{+};  \dot{B}_{p, 1}^{s+2-1 / p}\left(\mathbb{R}^{N-1}\right)\right),
		\end{align*}
		respectively. Then, the solution $b$ satisfies the following estimate for some constant $C>0$ depending only on $p, s$ and $N$
			\begin{align*}%\label{2.13}
				&\left\|\partial_t b\right\|_{L^1\left(\mathbb{R}_{+}; \dot{B}_{p,1}^s(\mathbb{R}_{+}^N)\right)}+\left\|D^2 b \right\|_{L^1(\mathbb{R}_{+};  \dot{B}_{p, 1}^s(\mathbb{R}_{+}^N))}\\ 
				\leq & C\left(\left\|b_0\right\|_{\dot{B}_{p, 1}^s(\mathbb{R}_{+}^N)}+\|f\|_{L^1(\mathbb{R}_{+};  \dot{B}_{p, 1}^s(\mathbb{R}_{+}^N)}+\|h\|_{\dot{F}_{1,1}^{1-1 / 2 p}(\mathbb{R}_{+};  \dot{B}_{p, 1}^s(\mathbb{R}^{N-1}))}\right.\left.+\|h\|_{L^1(\mathbb{R}_{+};  \dot{B}_{p, 1}^{s+2-1 / p}(\mathbb{R}^{N-1}))}\right).
			\end{align*}
	\end{theorem}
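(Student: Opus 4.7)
The strategy is to prove sufficiency by decomposing $b = b_1 + b_2 + b_3$, where each component carries exactly one of the data $b_0$, $f$, and $h$, and to obtain necessity via a trace argument. For $b_1$ (homogeneous equation with initial data $b_0$ and zero boundary) and $b_2$ (inhomogeneous forcing $f$ with zero initial and boundary data), I would extend by odd reflection in $y_n$; this reflection is bounded on $\dot{B}_{p,1}^s(\mathbb{R}_+^N)$ throughout the range $-1 + 1/p < s \leq 0$, since $s < 1/p$ is the threshold above which odd extension fails to preserve Besov regularity. The reflected equations become a Cauchy problem on $\mathbb{R}^N$ whose Dirichlet condition is enforced automatically by the oddness, so the whole-space maximal $L^1$-regularity of the heat semigroup (the same result invoked in (\ref{2.9})) applies, yielding
\begin{equation*}
\|\partial_t b_i\|_{L^1(\mathbb{R}_+; \dot{B}_{p,1}^s(\mathbb{R}_+^N))} + \|D^2 b_i\|_{L^1(\mathbb{R}_+; \dot{B}_{p,1}^s(\mathbb{R}_+^N))} \leq C\bigl(\|b_0\|_{\dot{B}_{p,1}^s(\mathbb{R}_+^N)} + \|f\|_{L^1(\mathbb{R}_+; \dot{B}_{p,1}^s(\mathbb{R}_+^N))}\bigr)
\end{equation*}
for $i = 1, 2$ after restriction back to $\mathbb{R}_+^N$.

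For $b_3$, which carries only the boundary data $h$, I would apply a partial Fourier transform in $y' \in \mathbb{R}^{N-1}$ and a Laplace transform in $t$; the resulting second-order ODE in $y_n$, together with decay as $y_n \to \infty$, gives the explicit representation
\begin{equation*}
\widehat{b_3}(\lambda, \xi', y_n) = e^{-\sqrt{\lambda + |\xi'|^2}\, y_n}\, \widehat{h}(\lambda, \xi').
\end{equation*}
Inverting, $b_3$ is a Poisson-type convolution of $h$ against a kernel whose time-tangential scaling is parabolic, i.e.\ $\lambda \sim |\xi'|^2$. Running a Littlewood-Paley decomposition in $y'$ and estimating each dyadic block against this kernel, the parabolic scaling dictates that the time regularity of $h$ corresponds to the exponent $1 - 1/(2p)$ (arising as the trace exponent from $\dot{W}^{1,1}$ in time combined with $L^1$-integrable second spatial derivatives), while its purely spatial regularity loses $1/p$ through the boundary trace, producing the index $s + 2 - 1/p$. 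The $L^1$ summability in time then forces the target time-regularity space to be the endpoint Triebel-Lizorkin space $\dot{F}^{1-1/(2p)}_{1,1}$ rather than a Besov space, which is exactly the summability-$1$ endpoint of the usual Bessel-potential trace space.

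Necessity is the associated trace theorem: if $b$ belongs to the maximal regularity class, then $b|_{y_n = 0}$ lies in the stated intersection by the same parabolic interpolation between temporal and spatial regularities, combined with a Littlewood-Paley analysis dual to the one used above. The principal obstacle is the boundary-data piece $b_3$: controlling the half-space Poisson semigroup $e^{-\sqrt{\lambda+|\xi'|^2}\, y_n}$ at summability index $1$ requires delicate Fourier-multiplier estimates, and it is precisely the non-Hilbertian endpoint character of $\dot{F}^{1-1/(2p)}_{1,1}$ that defeats the usual $q > 1$ arguments and forces the refined analysis of \cite{TS1}. Everything else, including the continuity in time and the compatibility $b_0|_{y_n=0} = h(0, \cdot)$ in an appropriate trace sense, follows routinely from this core estimate.
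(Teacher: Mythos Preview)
The paper does not prove Theorem~\ref{thm2.3}; it is quoted verbatim from \cite{TS1} and simply invoked to derive (\ref{2.14}). There is therefore no in-paper proof against which to compare your proposal.

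That said, your outline is consistent with the approach of \cite{TS1}. The decomposition $b=b_1+b_2+b_3$ isolating initial data, forcing, and boundary data is standard, and your observation that odd reflection in $y_N$ is bounded on $\dot{B}_{p,1}^s(\mathbb{R}_+^N)$ for $-1+1/p<s\le 0<1/p$ is correct, so $b_1$ and $b_2$ reduce to the whole-space result of \cite{RP}. For $b_3$ the Fourier--Laplace representation $e^{-\sqrt{\lambda+|\xi'|^2}\,y_N}\widehat h$ and the ensuing Littlewood--Paley analysis is precisely the machinery of \cite{TS1}; your identification of the endpoint Triebel--Lizorkin space $\dot F^{1-1/(2p)}_{1,1}$ as the natural trace class at summability $1$ is the key structural point. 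One small caution: the hard analytic content --- bounding the Poisson-type multiplier at $L^1$ in time, where no H\"ormander--Mikhlin theorem applies --- is exactly what makes \cite{TS1} nontrivial, and your proposal acknowledges but does not resolve it. As a sketch this is fine; as a self-contained proof it would need the detailed almost-orthogonality or kernel-decay arguments carried out in \cite{TS1}.
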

	
	By Theorems \ref{thmA1} and \ref{thm2.3}, (\ref{2.6}) and (\ref{2.10}), we  can get the maximal $L^{1}$-regularity for (\ref{2.11}). For $1 < p < \infty $ and $-1 + 1/p < s \leq 0$, we have
		\begin{align}\label{2.14}
			&\left\|\partial_t \mathbf{B}\right\|_{L^1(\mathbb{R}_{+}; \dot{B}_{p,1}^s(\mathbb{R}_{+}^N))}+\left\| D^2 \mathbf{B} \right\|_{L^1(\mathbb{R}_{+};  \dot{B}_{p, 1}^s(\mathbb{R}_{+}^N))} \nonumber\\
			\leq & C\left(\|\nabla\rho\|_{\dot{F}_{1,1}^{1-1 / 2 p}(\mathbb{R}_{+};  \dot{B}_{p, 1}^s(\mathbb{R}^{N-1}))}+\|\nabla\rho\|_{L^1(\mathbb{R}_{+};  \dot{B}_{p, 1}^{s+2-1 / p}(\mathbb{R}^{N-1}))}\|\bar{B}\|_{\dot{F}_{1,1}^{1-1 / 2 p}(\mathbb{R}_{+};  \dot{B}_{p, 1}^s(\mathbb{R}^{N-1}))}\right.\nonumber\\
			& \left.\qquad+\|\bar{B}\|_{L^1(\mathbb{R}_{+};  \dot{B}_{p, 1}^{s+2-1 / p}(\mathbb{R}^{N-1}))}\right)\nonumber
			\\\leq & C\left(\left\|B_0\right\|_{\dot{B}_{p, 1}^s(\mathbb{R}_{+}^N)}+\|f\|_{L^1(\mathbb{R}_{+};  \dot{B}_{p, 1}^s(\mathbb{R}_{+}^N))}+\|\nabla g\|_{L^1(\mathbb{R}_{+};  \dot{B}_{p, 1}^s(\mathbb{R}_{+}^N))}\right.\nonumber\\
			&\qquad\left.+\left\|\partial_t \nabla(-\Delta)^{-1} g\right\|_{L^1(\mathbb{R}_{+};  \dot{B}_{p, 1}^s(\mathbb{R}_{+}^N))}\right).
		\end{align}
	
	We can prove the maximal $L^1$-regularity for the original equations (\ref{2.3}) by combining with those estimates (\ref{2.6}), (\ref{2.10}), (\ref{2.14}) and the relation
	\begin{equation}\label{2.15}
		B(t, y)=\bar{B}(t, y)+\mathbf{B}(t, y)-\nabla \rho(t, y), \quad \text {in} \quad \mathbb{R}_{+}\times \mathbb{R}_{+}^N .
	\end{equation}
	
	Thus, the solution $B$ of (\ref{2.3}) satisfies the following estimates: for $1 < p < \infty $ and $-1 + 1/p < s \leq 0$,
		\begin{align}\label{2.16}
			&\left\|\partial_t B\right\|_{L^1(\mathbb{R}_{+};  \dot{B}_{p, 1}^s(\mathbb{R}_{+}^N))}+\left\|D^2 B\right\|_{L^1\left(\mathbb{R}_{+};  \dot{B}_{p, 1}^s\left(\mathbb{R}_{+}^n\right)\right)} \nonumber\\
			& \leq C\bigg(\left\|B_0\right\|_{\dot{B}_{p, 1}^s(\mathbb{R}_{+}^N)}+\|f\|_{L^1(\mathbb{R}_{+};  \dot{B}_{p, 1}^s(\mathbb{R}_{+}^N))}\nonumber\\
			&\qquad\qquad+\|\nabla g\|_{L^1(\mathbb{R}_{+};  \dot{B}_{p, 1}^s(\mathbb{R}_{+}^N))}+\left\|\partial_t \nabla(-\Delta)^{-1} g\right\|_{L^1(\mathbb{R}_{+};  \dot{B}_{p, 1}^s(\mathbb{R}_{+}^N))}\bigg).
		\end{align}

	\section{Estimates of nonlinear terms}\label{sec3}
	To obtain the estimates of $u, q$ and $B$, we shall use Theorem \ref{thm2.1} and (\ref{2.16}). For this purpose, we shall estimate the nonlinear functions appearing on the right-hand side of equations (\ref{1.9}). Let $N \leq p<2N-1$. For $\partial_t u, \partial_t B, D^2 u, D^2 B$ and $\nabla q \in$ $L^1\left(\mathbb{R}_{+};  \dot{B}_{p, 1}^{-1+N / p}\left(\mathbb{R}_{+}^n\right)\right)$, let $V_{1}(u), \dots, V_{8}(u,q)$ be the terms defined in (\ref{1.12}). By using Propositions \ref{Aprop} and \ref{Aprop1}, (\ref{1.8}) and (\ref{1.12}),  we have
	\begin{align}\label{3.1}
		& \sum_{k=1}^N\left\| P_k^i \partial_i\left( P_k^j \partial_j u\right)\right\|_{L^1(\mathbb{R}_{+};  \dot{B}_{p, 1}^{-1+\frac{N}{p}}(\mathbb{R}_{+}^N))}\nonumber \\
		& \lesssim \sup_{t>0}\left\|\int_0^t D u d \tau\right\|_{ \dot{B}_{p, 1}^{\frac{N}{p}}(\mathbb{R}_{+}^N)}\bigg(\|\partial_i\left( P_k^j \partial_j u\right)\|_{L^1(\mathbb{R}_{+};  \dot{B}_{p, 1}^{-1+\frac{N}{p}}(\mathbb{R}_{+}^N))} \nonumber\\
		&\qquad +\bigg\|\sum_{r=2}^{N-2} c_r \prod_{m, \ell \leq N}^r\left(\int_0^t \partial_{\ell} u_m d \tau\right)\partial_i\left( P_k^j \partial_j u\right)\bigg\|_{L^1(\mathbb{R}_{+};  \dot{B}_{p, 1}^{-1+\frac{N}{p}}(\mathbb{R}_{+}^N))}\bigg) \nonumber \\ &
		\lesssim\dots \nonumber
		\\ &
		\lesssim \sum_{r=1}^{N-1}\left\|\int_0^t D u d \tau\right\|_{L^\infty(\mathbb{R}_{+};  \dot{B}_{p, 1}^{\frac{N}{p}}(\mathbb{R}_{+}^N))}^r\| \nabla (P_k^j \partial_j u)\|_{L^1(\mathbb{R}_{+}; \dot{B}_{p, 1}^{-1+\frac{N}{p}}(\mathbb{R}_{+}^N))} \nonumber
		\\& \lesssim  \sum_{r=1}^{N-1}\left\|D^2 u\right\|_{L^1(\mathbb{R}_{+}; \dot{B}_{p, 1}^{-1+\frac{N}{p}}(\mathbb{R}_{+}^N))}^r\| \nabla (P_k^j \partial_j u)\|_{L^1(\mathbb{R}_{+}; \dot{B}_{p, 1}^{-1+\frac{N}{p}}(\mathbb{R}_{+}^N))},
	\end{align}
	where the symbol "$\lesssim$" denotes "$\leq C$" for some constant $C$. We see, for the last term on the right-hand side of (\ref{3.1}), that
	\begin{align*}
		&\| \nabla (P_k^j \partial_j u)\|_{L^1(\mathbb{R}_{+};  \dot{B}_{p, 1}^{-1+\frac{N}{p}}(\mathbb{R}_{+}^N))}\leq \| \nabla P_k^j \partial_j u\|_{L^1(\mathbb{R}_{+};  \dot{B}_{p, 1}^{-1+\frac{N}{p}}(\mathbb{R}_{+}^N))}
		+\| P_k^j \nabla\partial_j u\|_{L^1(\mathbb{R}_{+};  \dot{B}_{p, 1}^{-1+\frac{N}{p}}(\mathbb{R}_{+}^N))}.
	\end{align*}
	
	It is then easy to see that a similar argument of (\ref{3.1}) can be applied to estimate  $P_k^j \nabla\partial_j u$, and we conclude that
		\begin{align}\label{3.2}
			 \sum_{j=1}^N\left\| P_{j}^{i}\nabla\partial_{j}u\right\|_{L^1(\mathbb{R}_{+};  \dot{B}_{p, 1}^{-1+\frac{N}{p}}(\mathbb{R}_{+}^N))}&\lesssim  \sum_{r=1}^{N-1}\left\|D^2 u\right\|_{L^1(\mathbb{R}_{+};  \dot{B}_{p, 1}^{-1+\frac{N}{p}}(\mathbb{R}_{+}^N))}^r \| D^{2} u\|_{L^1(\mathbb{R}_{+};  \dot{B}_{p, 1}^{-1+\frac{N}{p}}(\mathbb{R}_{+}^N))} \nonumber\\
			 &\lesssim \sum_{r=2}^{N}\left\|D^2 u\right\|_{L^1(\mathbb{R}_{+};  \dot{B}_{p, 1}^{-1+\frac{N}{p}}(\mathbb{R}_{+}^N))}^r.
		\end{align}
	
	By (\ref{1.8}), we see that $P_{j}^{i}$ is a polynomial of degree $(N-1)$  without a constant term. Then, we can get that $\nabla P_{j}^{i}$ is  the product of a polynomial of degree $(N-2)$ with a constant term and $\int_{0}^{t}D^{2}ud\tau$. Thus, by Propositions \ref{Aprop} and \ref{Aprop1}, (\ref{3.1}) and H\"{o}lder's inequality,  we have
		\begin{align}\label{3.3}
			&\left\|\nabla P_{j}^{i}\nabla u\right\|_{L^1(\mathbb{R}_{+};  \dot{B}_{p, 1}^{-1+\frac{N}{p}}(\mathbb{R}_{+}^N))}\nonumber\\
			& \lesssim\left\|\nabla\left(\sum_{r=1}^{N-1} c_r \prod_{m, \ell \leq N, (m,\ell) \neq(j,i)}^r\left(\int_0^t \partial_{\ell} u_m d \tau \right)\right)\nabla u\right\|_{L^1(\mathbb{R}_{+};  \dot{B}_{p, 1}^{-1+\frac{N}{p}}(\mathbb{R}_{+}^N))}\nonumber\\& \lesssim\left(1+\sum_{r=1}^{N-2}\left\|D^2 u\right\|_{L^1(\mathbb{R}_{+};  \dot{B}_{p, 1}^{-1+\frac{N}{p}}(\mathbb{R}_{+}^N))}^r\right)\left\|\int_0^t D^{2} u d\tau \nabla u   \right\|_{L^1(\mathbb{R}_{+};  \dot{B}_{p, 1}^{-1+\frac{N}{p}}(\mathbb{R}_{+}^N))}\nonumber
			\\& \lesssim\left(1+\sum_{r=1}^{N-2}\left\|D^2 u\right\|_{L^1(\mathbb{R}_{+};  \dot{B}_{p, 1}^{-1+\frac{N}{p}}(\mathbb{R}_{+}^N))}^r\right)\left(\|\nabla u\|_{L^1(\mathbb{R}_{+};  \dot{B}_{p, 1}^{\frac{N}{p}}(\mathbb{R}_{+}^N))}\left\|\int_0^t D^2 u d \tau\right\|_{L^{\infty}(\mathbb{R}_{+};  \dot{B}_{p, 1}^{-1+\frac{N}{p}}(\mathbb{R}_{+}^N))}\right)\nonumber\\&\lesssim\left(1+\sum_{r=1}^{N-2}\left\|D^2 u\right\|_{L^1(\mathbb{R}_{+};  \dot{B}_{p, 1}^{-1+\frac{N}{p}}(\mathbb{R}_{+}^N))}^r\right)\|D^{2} u\|_{L^1(\mathbb{R}_{+};  \dot{B}_{p, 1}^{-1+\frac{N}{p}}(\mathbb{R}_{+}^N))}^{2}	\nonumber	\\
			&\lesssim \sum_{r=2}^{N} \|D^{2} u\|_{L^1(\mathbb{R}_{+};  \dot{B}_{p, 1}^{-1+\frac{N}{p}}(\mathbb{R}_{+}^N))}^{r}.
		\end{align}
	
	Combining with (\ref{3.1}),  (\ref{3.2}) and (\ref{3.3}), we have that
		\begin{align}\label{3.4}
			 &\sum_{k=1}^N\left\| P_k^i \partial_i\left( P_k^j \partial_j u\right)\right\|_{L^1(\mathbb{R}_{+};  \dot{B}_{p, 1}^{-1+\frac{N}{p}}(\mathbb{R}_{+}^N))}\nonumber\\
			 &\lesssim
			\sum_{r=1}^{N-1}\left\|D^2 u\right\|_{L^1(\mathbb{R}_{+};  \dot{B}_{p, 1}^{-1+\frac{N}{p}}(\mathbb{R}_{+}^N))}^r\sum_{r=2}^{N}\left\|D^2 u\right\|_{L^1(\mathbb{R}_{+};  \dot{B}_{p, 1}^{-1+\frac{N}{p}}(\mathbb{R}_{+}^N))}^r\nonumber
			\\ &\lesssim \sum_{r=3}^{2 N-1}\left\|D^2 u\right\|_{L^1(\mathbb{R}_{+};  \dot{B}_{p, 1}^{-1+\frac{N}{p}}(\mathbb{R}_{+}^N))}^{r}.
		\end{align}
	By (\ref{3.2}) and (\ref{3.3}), we have estimated the other terms in $V_{1} (u)$ and we conclude that
		\begin{align}\label{3.5}
			\left\|V_1(u)\right\|_{L^1(\mathbb{R}_{+};  \dot{B}_{p, 1}^{-1+\frac{N}{p}}(\mathbb{R}_{+}^N))} \lesssim  \sum_{r=2}^{2 N-1}\left\|D^2 u\right\|_{L^1(\mathbb{R}_{+};  \dot{B}_{p, 1}^{-1+\frac{N}{p}}(\mathbb{R}_{+}^N))}^{r}.
		\end{align}
	
	Next, we  consider $V_{2}(u,q)$ given in (\ref{1.12}).  Employing
	the same arguments as those in proving (\ref{3.1}), we have
		\begin{align}\label{3.6}
			\left\| V_{2}(u, q)\right\|_{L^1(\mathbb{R}_{+};  \dot{B}_{p, 1}^{-1+\frac{N}{p}}(\mathbb{R}_{+}^N))} 
			& \lesssim  \sum_{r=1}^{N-1}\left\|\int_0^t D u d \tau\right\|_{L^{\infty}(\mathbb{R}_{+};  \dot{B}_{p, 1}^{\frac{N}{p}}(\mathbb{R}_{+}^N))}^r\|\nabla q\|_{L^1(\mathbb{R}_{+};  \dot{B}_{p, 1}^{-1+\frac{N}{p}}(\mathbb{R}_{+}^N))}\nonumber\\
			&\lesssim \sum_{r=1}^{N-1}\left\|D^2 u\right\|_{L^1(\mathbb{R}_{+};  \dot{B}_{p, 1}^{-1+\frac{N}{p}}(\mathbb{R}_{+}^N))}^r\|\nabla q\|_{L^1(\mathbb{R}_{+};  \dot{B}_{p, 1}^{-1+\frac{N}{p}}(\mathbb{R}_{+}^N))}.
		\end{align}

	Let $V_3(u, B)$ be the nonlinear term given in (\ref{1.12}), by (\ref{A.3}), Proposition \ref{Aprop1} and  employing
	the same argument as in proving (\ref{3.1}), we have
		\begin{align}\label{3.7}
			&\left\| V_3(u, B)\right\|_{L^1(\mathbb{R}_{+};  \dot{B}_{p, 1}^{-1+\frac{N}{p}}(\mathbb{R}_{+}^N))}\nonumber\\
			&\lesssim \left\| B^{i}P_{i}^{j}\partial_{j}B\right\|_{L^1(\mathbb{R}_{+};  \dot{B}_{p, 1}^{-1+\frac{N}{p}}(\mathbb{R}_{+}^N))}+\left\| B^{i}\partial_{i}B\right\|_{L^1(\mathbb{R}_{+};  \dot{B}_{p, 1}^{-1+\frac{N}{p}}(\mathbb{R}_{+}^N))} \nonumber
			\\ & \lesssim  \left(\sum_{r=1}^{N-1}\left\|\int_0^t D u d \tau\right\|_{L^{\infty}(\mathbb{R}_{+};  \dot{B}_{p, 1}^{\frac{N}{p}}(\mathbb{R}_{+}^N))}^r+1\right)\|B\nabla B\|_{L^1\left(\mathbb{R}_{+};  \dot{B}_{p, 1}^{-1+\frac{N}{p}}(\mathbb{R}_{+}^N)\right)}\nonumber\\
			&\lesssim \left(\sum_{r=1}^{N-1}\left\|D^2 u\right\|_{L^1(\mathbb{R}_{+};  \dot{B}_{p, 1}^{-1+\frac{N}{p}}(\mathbb{R}_{+}^N))}^r+1\right)\left(\|\nabla B\|_{L^1(\mathbb{R}_{+};  \dot{B}_{p, 1}^{\frac{N}{p}}(\mathbb{R}_{+}^N))} \sup _{t>0}\|B\|_{\dot{B}_{p, 1}^{-1+\frac{N}{p}}(\mathbb{R}_{+}^N)}\right)\nonumber
			\\&\lesssim \left(\sum_{r=1}^{N-1}\left\|D^2 u\right\|_{L^1(\mathbb{R}_{+};  \dot{B}_{p, 1}^{-1+\frac{N}{p}}(\mathbb{R}_{+}^N))}^r+1\right)\left\|D^{2} B\right\|_{L^1(\mathbb{R}_{+};  \dot{B}_{p, 1}^{-1+\frac{N}{p}}(\mathbb{R}_{+}^N))}\left\| \int_t^{\infty} \partial_\tau B d \tau \right\|_{\dot{B}_{p, 1}^{-1+\frac{N}{p}}(\mathbb{R}_{+}^N)}\nonumber
			\\&\lesssim \left(\sum_{r=1}^{N-1}\left\|D^2 u\right\|_{L^1(\mathbb{R}_{+};  \dot{B}_{p, 1}^{-1+\frac{N}{p}}(\mathbb{R}_{+}^N))}^r+1\right)\left\|D^{2} B\right\|_{L^1(\mathbb{R}_{+};  \dot{B}_{p, 1}^{-1+\frac{N}{p}}(\mathbb{R}_{+}^N))}\left\|  \partial_t B  \right\|_{L^{1}(\mathbb{R}_{+}; \dot{B}_{p, 1}^{-1+\frac{N}{p}}(\mathbb{R}_{+}^N))}.
		\end{align}
	Next, employing the same argument as in proving (\ref{3.3}),  we have
	\begin{align}
		&\sum_{i=1}^N\left\| \nabla P_{i}^{j}\nabla B\right\|_{L^1(\mathbb{R}_{+};  \dot{B}_{p, 1}^{-1+\frac{N}{p}}(\mathbb{R}_{+}^N))} \nonumber\\
		& \lesssim\left(1+\sum_{r=1}^{N-2}\left\|D^2 u\right\|_{L^1(\mathbb{R}_{+};  \dot{B}_{p, 1}^{-1+\frac{N}{p}}(\mathbb{R}_{+}^N))}^r\right)\left(\|\nabla B\|_{L^1(\mathbb{R}_{+};  \dot{B}_{p, 1}^{\frac{N}{p}}(\mathbb{R}_{+}^N))}\left\|\int_0^t D^2 u d \tau\right\|_{L^{\infty}(\mathbb{R}_{+};  \dot{B}_{p, 1}^{-1+\frac{N}{p}}(\mathbb{R}_{+}^N))}\right),
		\notag\\
		& \lesssim \sum_{r=1}^{N-1}\left\|D^2 u\right\|_{L^1(\mathbb{R}_{+};  \dot{B}_{p, 1}^{-1+\frac{N}{p}}(\mathbb{R}_{+}^N))}^r\|D^{2} B\|_{L^1(\mathbb{R}_{+};  \dot{B}_{p, 1}^{-1+\frac{N}{p}}(\mathbb{R}_{+}^N))}.\label{3.8}
	\end{align}
	We also have that
		\begin{align}\label{3.9}
			\sum_{j=1}^N\left\|P_{j}^{i}\partial_{i}\partial_{j}B\right\|_{L^1(\mathbb{R}_{+};  \dot{B}_{p, 1}^{-1+\frac{N}{p}}(\mathbb{R}_{+}^N))}& \lesssim  \sum_{r=1}^{N-1}\left\|D^2 u\right\|_{L^1(\mathbb{R}_{+};  \dot{B}_{p, 1}^{-1+\frac{N}{p}}(\mathbb{R}_{+}^N))}^r \left\| \partial_{i} \partial_j B\right\|_{L^1(\mathbb{R}_{+};  \dot{B}_{p, 1}^{-1+\frac{N}{p}}(\mathbb{R}_{+}^N))} \nonumber
			\\&\lesssim  \sum_{r=1}^{N-1}\left\|D^2 u\right\|_{L^1(\mathbb{R}_{+};  \dot{B}_{p, 1}^{-1+\frac{N}{p}}(\mathbb{R}_{+}^N))}^r \left\| D^{2} B\right\|_{L^1(\mathbb{R}_{+};  \dot{B}_{p, 1}^{-1+\frac{N}{p}}(\mathbb{R}_{+}^N))}.
		\end{align}
	Combining with (\ref{3.8}) and (\ref{3.9}), we obtain that
		\begin{align}\label{3.10}
			&\sum_{i=1}^N\left\| \partial_{i}(P_{i}^{j}\partial_{j}B)\right\|_{L^1(\mathbb{R}_{+};  \dot{B}_{p, 1}^{-1+\frac{N}{p}}(\mathbb{R}_{+}^N))}\nonumber\\
			&\lesssim \left\| \nabla P_{i}^{j}\nabla B\right\|_{L^1(\mathbb{R}_{+};  \dot{B}_{p, 1}^{-1+\frac{N}{p}}(\mathbb{R}_{+}^N))}+\left\|P_{j}^{i}\partial_{i}\partial_{j}B\right\|_{L^1(\mathbb{R}_{+};  \dot{B}_{p, 1}^{-1+\frac{N}{p}}(\mathbb{R}_{+}^N))}\nonumber
			\\&\lesssim  \sum_{r=1}^{N-1}\left\|D^2 u\right\|_{L^1\mathbb{R}_{+};  \dot{B}_{p, 1}^{-1+\frac{N}{p}}(\mathbb{R}_{+}^N))}^r \| D^{2} B\|_{L^1(\mathbb{R}_{+};  \dot{B}_{p, 1}^{-1+\frac{N}{p}}(\mathbb{R}_{+}^N))}.
		\end{align}
	Thus, we have
		\begin{align}\label{3.11}
			&\sum_{k=1}^N \left\|P_k^i \partial_i\left( P_k^j \partial_j B\right)\right\|_{L^1(\mathbb{R}_{+};  \dot{B}_{p, 1}^{-1+\frac{N}{p}}(\mathbb{R}_{+}^N))} \nonumber
			\\
			& \lesssim \sum_{r=1}^{N-1}\left\|D^2 u\right\|_{L^1(\mathbb{R}_{+};  \dot{B}_{p, 1}^{-1+\frac{N}{p}}(\mathbb{R}_{+}^N))}^r \left\|\partial_i\left( P_k^j \partial_j B\right)\right\|_{L^1(\mathbb{R}_{+};  \dot{B}_{p, 1}^{-1+\frac{N}{p}}(\mathbb{R}_{+}^N))}\nonumber
			\\
			& \lesssim \sum_{r=1}^{N-1}\left\|D^2 u\right\|_{L^1(\mathbb{R}_{+};  \dot{B}_{p, 1}^{-1+\frac{N}{p}}(\mathbb{R}_{+}^N)t)}^r\sum_{r=1}^{N-1}\left\|D^2 u\right\|_{L^1(\mathbb{R}_{+};  \dot{B}_{p, 1}^{-1+\frac{N}{p}}(\mathbb{R}_{+}^N))}^r  \left\| D^{2} B\right\|_{L^1(\mathbb{R}_{+};  \dot{B}_{p, 1}^{-1+\frac{N}{p}}(\mathbb{R}_{+}^N))}\nonumber
			\\&\lesssim  \sum_{r=2}^{2N-2}\left\|D^2 u\right\|_{L^1(\mathbb{R}_{+};  \dot{B}_{p, 1}^{-1+\frac{N}{p}}(\mathbb{R}_{+}^N))}^r\left\| D^{2} B\right\|_{L^1(\mathbb{R}_{+};  \dot{B}_{p, 1}^{-1+\frac{N}{p}}(\mathbb{R}_{+}^N))}.
		\end{align}
	We shall consider the other terms in $V_{4}(u,B)$ to obtain
		\begin{align}\label{3.12}
			&\left\| B^{i}\partial_{i}u\right\|_{L^1(\mathbb{R}_{+};  \dot{B}_{p, 1}^{-1+\frac{N}{p}}(\mathbb{R}_{+}^N))}+\left\|B^{i}P_{i}^{j}\partial_{j}u\right\|_{L^1(\mathbb{R}_{+};  \dot{B}_{p, 1}^{-1+\frac{N}{p}}(\mathbb{R}_{+}^N))} \nonumber\\ & \lesssim  \left(\sum_{r=1}^{N-1}\left\|\int_0^t D u d \tau\right\|_{L^{\infty}(\mathbb{R}_{+};  \dot{B}_{p, 1}^{\frac{N}{p}}(\mathbb{R}_{+}^N))}^r+1\right)\|B\nabla B\|_{L^1(\mathbb{R}_{+};  \dot{B}_{p, 1}^{-1+\frac{N}{p}}(\mathbb{R}_{+}^N))}+\|B\nabla u\|_{L^1(\mathbb{R}_{+};  \dot{B}_{p, 1}^{-1+\frac{N}{p}}(\mathbb{R}_{+}^N))}\nonumber\\
			&\lesssim \left(\sum_{r=1}^{N-1}\left\|D^2 u\right\|_{L^1(\mathbb{R}_{+};  \dot{B}_{p, 1}^{-1+\frac{N}{p}}(\mathbb{R}_{+}^N))}^r+1\right)\left(\|\nabla B\|_{L^1(\mathbb{R}_{+};  \dot{B}_{p, 1}^{\frac{N}{p}}(\mathbb{R}_{+}^N))} \sup _{t>0}\left\|B\right\|_{\dot{B}_{p, 1}^{-1+\frac{N}{p}}(\mathbb{R}_{+}^N)}\right)\nonumber\\
			&\qquad +\left(\|\nabla u\|_{L^1(\mathbb{R}_{+};  \dot{B}_{p, 1}^{\frac{N}{p}}(\mathbb{R}_{+}^N))} \sup _{t>0}\left\|B\right\|_{\dot{B}_{p, 1}^{-1+\frac{N}{p}}(\mathbb{R}_{+}^N)}\right)\nonumber
			\\&\lesssim \left\| \partial_t B \right\|_{L^{1}(\mathbb{R}_{+},\dot{B}_{p, 1}^{-1+\frac{N}{p}}(\mathbb{R}_{+}^N))}\left(\sum_{r=1}^{N-1}\left\|D^2 u\right\|_{L^1(\mathbb{R}_{+};  \dot{B}_{p, 1}^{-1+\frac{N}{p}}(\mathbb{R}_{+}^N))}^r+1\right)\nonumber\\
			&\qquad\left( \left\|D^{2} B\right\|_{L^1(\mathbb{R}_{+};  \dot{B}_{p, 1}^{-1+\frac{N}{p}}(\mathbb{R}_{+}^N))}+\left\|D^{2} u\right\|_{L^1(\mathbb{R}_{+};  \dot{B}_{p, 1}^{-1+\frac{N}{p}}(\mathbb{R}_{+}^N))}\right).
		\end{align}
	By (\ref{3.9})-(\ref{3.12}), we have
		\begin{align}\label{3.13}
			&\left\| V_4(u, B)\right\|_{L^1(\mathbb{R}_{+};  \dot{B}_{p, 1}^{-1+\frac{N}{p}}(\mathbb{R}_{+}^N))}\nonumber
			\\&\lesssim \sum_{r=2}^{2N-2}\left\|D^2 u\right\|_{L^1(\mathbb{R}_{+};  \dot{B}_{p, 1}^{-1+\frac{N}{p}}(\mathbb{R}_{+}^N))}^r \| D^{2} B\|_{L^1(\mathbb{R}_{+};  \dot{B}_{p, 1}^{-1+\frac{N}{p}}(\mathbb{R}_{+}^N)t)}\nonumber\\&\quad+\left\| \partial_t B \right\|_{L^{1}(\mathbb{R}_{+},\dot{B}_{p, 1}^{-1+\frac{N}{p}}(\mathbb{R}_{+}^N))}\left(\sum_{r=1}^{N-1}\left\|D^2 u\right\|_{L^1(\mathbb{R}_{+};  \dot{B}_{p, 1}^{-1+\frac{N}{p}}(\mathbb{R}_{+}^N))}^r+1\right)\nonumber\\&\quad\times\left( \left\|D^{2} B\right\|_{L^1(\mathbb{R}_{+};  \dot{B}_{p, 1}^{-1+\frac{N}{p}}(\mathbb{R}_{+}^N))}+\left\|D^{2} u\right\|_{L^1(\mathbb{R}_{+};  \dot{B}_{p, 1}^{-1+\frac{N}{p}}(\mathbb{R}_{+}^N))}\right).
		\end{align}
		
Now, we consider $V_{5}(u)$, by (\ref{3.3}) and (\ref{3.2}), it yield that
		\begin{align}\label{3.14}
			\left\|\nabla V_{5}(u)\right\|_{L^1(\mathbb{R}_{+};  \dot{B}_{p, 1}^{-1+\frac{N}{p}}(\mathbb{R}_{+}^N))}   & \lesssim \left\|\nabla (P^{j}_{i} \partial_j u^{i})\right\|_{L^1(\mathbb{R}_{+};  \dot{B}_{p, 1}^{-1+\frac{N}{p}}(\mathbb{R}_{+}^N))}\nonumber\\&\lesssim \left\|\nabla (P^{j}_{i}) \partial_j u^{i})\right\|_{L^1(\mathbb{R}_{+};  \dot{B}_{p, 1}^{-1+\frac{N}{p}}(\mathbb{R}_{+}^N))}+\left\|P^{j}_{i} \nabla\partial_j u^{i}\right\|_{L^1(\mathbb{R}_{+};  \dot{B}_{p, 1}^{-1+\frac{N}{p}}(\mathbb{R}_{+}^N))}\nonumber
			\\&\lesssim \sum_{r=2}^{N} \|D^{2} u\|_{L^1(\mathbb{R}_{+};  \dot{B}_{p, 1}^{-1+\frac{N}{p}}(\mathbb{R}_{+}^N))}^{r}.
		\end{align}
	
	By (\ref{1.8}), we can get that $\partial_{t} P_{j}^{i}$ is the product of a  polynomial of degree $(N-2)$ with a constant term and $\nabla u$.  Since $ \partial_j (P^{j}_{i} u^{i})=P^{j}_{i} \partial_j u^{i}$, we have
		\begin{align}\label{3.15}
			\left\|\partial_t(-\Delta)^{-1} \nabla V_{5}(u)\right\|_{L^1(\mathbb{R}_{+};  \dot{B}_{p, 1}^{-1+\frac{N}{p}}(\mathbb{R}_{+}^N))} &\lesssim \left\|\partial_t(-\Delta)^{-1} \nabla  \partial_j (P^{j}_{i}  u^{i})\right\|_{L^1(\mathbb{R}_{+};  \dot{B}_{p, 1}^{-1+\frac{N}{p}}(\mathbb{R}_{+}^N))}\nonumber
			\\ & \lesssim \left\|\partial_t (P^{j}_{i} u^{i})\right\|_{L^1(\mathbb{R}_{+};  \dot{B}_{p, 1}^{-1+\frac{N}{p}}(\mathbb{R}_{+}^N))}.
		\end{align}
	Thus, by Proposition \ref{Aprop1}, we get
		\begin{align}\label{3.16}
			&\left\|\partial_t (P^{j}_{i}  u^{i})\right\|_{L^1(\mathbb{R}_{+};  \dot{B}_{p, 1}^{-1+\frac{N}{p}}(\mathbb{R}_{+}^N))}\nonumber
			\\&\lesssim \left\|\partial_t P^{j}_{i}  u^{i}\right\|_{L^1(\mathbb{R}_{+};  \dot{B}_{p, 1}^{-1+\frac{N}{p}}(\mathbb{R}_{+}^N))}+\left\| P^{j}_{i} \partial_t u^{i}\right\|_{L^1(\mathbb{R}_{+};  \dot{B}_{p, 1}^{-1+\frac{N}{p}}(\mathbb{R}_{+}^N))}\nonumber
			\\
			& \lesssim  \sum_{r=0}^{N-2}\left\|D^2 u\right\|_{L^1(\mathbb{R}_{+};  \dot{B}_{p, 1}^{-1+\frac{N}{p}}(\mathbb{R}_{+}^N))}^r\|u \nabla u\|_{L^1(\mathbb{R}_{+};  \dot{B}_{p, 1}^{-1+\frac{N}{p}}(\mathbb{R}_{+}^N))}\nonumber
			\\&\qquad +\sum_{r=1}^{N-1}\left\|D^2 u\right\|_{L^1(\mathbb{R}_{+};  \dot{B}_{p, 1}^{-1+\frac{N}{p}}(\mathbb{R}_{+}^N))}^r\left\|\partial_t u\right\|_{L^1(\mathbb{R}_{+};  \dot{B}_{p, 1}^{-1+\frac{N}{p}}(\mathbb{R}_{+}^N))}\nonumber
			\\
			& \lesssim \sum_{r=1}^{N-1}\left\|D^2 u\right\|_{L^1(\mathbb{R}_{+};  \dot{B}_{p, 1}^{-1+\frac{N}{p}}(\mathbb{R}_{+}^N))}^r\left\|\partial_t u\right\|_{L^1(\mathbb{R}_{+};  \dot{B}_{p, 1}^{-1+\frac{N}{p}}(\mathbb{R}_{+}^N))} \nonumber\\
			&\quad  + \sum_{r=0}^{N-2}\left\|D^2 u\right\|_{L^1(\mathbb{R}_{+};  \dot{B}_{p, 1}^{-1+\frac{N}{p}}(\mathbb{R}_{+}^N))}^r\left\|\nabla u\right\|_{L^1(\mathbb{R}_{+};  \dot{B}_{p, 1}^{\frac{N}{p}}(\mathbb{R}_{+}^N))} \sup _{t>0}\left\|u\right\|_{\dot{B}_{p, 1}^{-1+\frac{N}{p}}(\mathbb{R}_{+}^N)}\nonumber \\
			&\lesssim  \sum_{r=1}^{N-1}\left\|D^2 u\right\|_{L^1(\mathbb{R}_{+};  \dot{B}_{p, 1}^{-1+\frac{N}{p}}(\mathbb{R}_{+}^N))}^r\left\|\partial_t u\right\|_{L^1(\mathbb{R}_{+};  \dot{B}_{p, 1}^{-1+\frac{N}{p}}(\mathbb{R}_{+}^N))}.
		\end{align}
	Since the similar arguments in (\ref{3.15}) and (\ref{3.16}) can be applied to estimate $V_{6}(u,B)$,  by (\ref{3.8}), we conclude that
		\begin{align}\label{3.17}
			&\left\|\nabla V_6(u, B)\right\|_{L^1(\mathbb{R}_{+};  \dot{B}_{p, 1}^{-1+\frac{N}{p}}(\mathbb{R}_{+}^N))} \nonumber
			\\ & \lesssim \left\|\nabla (P^{k}_{i} \partial_k B^{i})\right\|_{L^1(\mathbb{R}_{+};  \dot{B}_{p, 1}^{-1+\frac{N}{p}}(\mathbb{R}_{+}^N))}\nonumber\\&\lesssim \left\|\nabla (P^{k}_{i}) \partial_k B^{i}\right\|_{L^1(\mathbb{R}_{+};  \dot{B}_{p, 1}^{-1+\frac{N}{p}}(\mathbb{R}_{+}^N))}+\left\|P^{k}_{i} \nabla\partial_k B^{i}\right\|_{L^1(\mathbb{R}_{+};  \dot{B}_{p, 1}^{-1+\frac{N}{p}}(\mathbb{R}_{+}^N))}\nonumber
			\\&\lesssim  \sum_{r=1}^{N-1}\left\|D^2 u\right\|_{L^1(\mathbb{R}_{+};  \dot{B}_{p, 1}^{-1+\frac{N}{p}}(\mathbb{R}_{+}^N))}^r \left\| D^{2} B\right\|_{L^1(\mathbb{R}_{+};  \dot{B}_{p, 1}^{-1+\frac{N}{p}}(\mathbb{R}_{+}^N))} +\left\|\left(\nabla P_{j}^{i} \right) \nabla B\right\|_{L^1(\mathbb{R}_{+};  \dot{B}_{p, 1}^{-1+\frac{N}{p}}(\mathbb{R}_{+}^N))}\nonumber
			\\& \lesssim \sum_{r=1}^{N-1}\left\|D^2 u\right\|_{L^1(\mathbb{R}_{+};  \dot{B}_{p, 1}^{-1+\frac{N}{p}}(\mathbb{R}_{+}^N))}^r \left\| D^{2} B\right\|_{L^1(\mathbb{R}_{+};  \dot{B}_{p, 1}^{-1+\frac{N}{p}}(\mathbb{R}_{+}^N))},
		\end{align}
	and by Proposition \ref{Aprop1}, we have
		\begin{align}\label{3.18}
			&\left\|\partial_t(-\Delta)^{-1} \nabla V_{6}(u,B)\right\|_{L^1(\mathbb{R}_{+};  \dot{B}_{p, 1}^{-1+\frac{N}{p}}(\mathbb{R}_{+}^N))}\nonumber
			\\ &\lesssim \left\|\partial_t(-\Delta)^{-1} \nabla  \partial_j (P^{j}_{i}  B^{i})\right\|_{L^1(\mathbb{R}_{+};  \dot{B}_{p, 1}^{-1+\frac{N}{p}}(\mathbb{R}_{+}^N))}\nonumber\\ & \lesssim \sum_{r=1}^{N-1}\left\|D^2 u\right\|_{L^1(\mathbb{R}_{+};  \dot{B}_{p, 1}^{-1+\frac{N}{p}}(\mathbb{R}_{+}^N))}^r\left\|\partial_t B\right\|_{L^1t(\mathbb{R}_{+};  \dot{B}_{p, 1}^{-1+\frac{N}{p}}(\mathbb{R}_{+}^N))} \nonumber\\
			& \quad+ \sum_{r=0}^{N-2}\left\|D^2 u\right\|_{L^1(\mathbb{R}_{+};  \dot{B}_{p, 1}^{-1+\frac{N}{p}}(\mathbb{R}_{+}^N))}^r\left\|B \nabla u\right\|_{L^1(\mathbb{R}_{+};  \dot{B}_{p, 1}^{-1+\frac{N}{p}}(\mathbb{R}_{+}^N))}\nonumber\\& \lesssim \sum_{r=1}^{N-1}\left\|D^2 u\right\|_{L^1(\mathbb{R}_{+};  \dot{B}_{p, 1}^{-1+\frac{N}{p}}(\mathbb{R}_{+}^N))}^r\left\|\partial_t B\right\|_{L^1(\mathbb{R}_{+};  \dot{B}_{p, 1}^{-1+\frac{N}{p}}(\mathbb{R}_{+}^N))}\nonumber \\
			&\quad +\sum_{r=0}^{N-2}\left\|D^2 u\right\|_{L^1(\mathbb{R}_{+};  \dot{B}_{p, 1}^{-1+\frac{N}{p}}(\mathbb{R}_{+}^N))}^r\|\nabla u\|_{L^1(\mathbb{R}_{+};  \dot{B}_{p, 1}^{\frac{N}{p}}(\mathbb{R}_{+}^N))} \sup _{t>0}\left\|B\right\|_{\dot{B}_{p, 1}^{-1+\frac{N}{p}}(\mathbb{R}_{+}^N)} \nonumber\\&\lesssim  \sum_{r=1}^{N-1}\left\|D^2 u\right\|_{L^1(\mathbb{R}_{+};  \dot{B}_{p, 1}^{-1+\frac{N}{p}}(\mathbb{R}_{+}^N))}^r\left\|\partial_t B\right\|_{L^1(\mathbb{R}_{+};  \dot{B}_{p, 1}^{-1+\frac{N}{p}}(\mathbb{R}_{+}^N))}.
		\end{align}

	\begin{prop}[cf. \cite{TS}]\label{prop1}
		Let $p\in[N,2N-1)$ and assume that functions $u$ and $q$ satisfy $\partial_t u, D^2 u, \nabla q \in L^1\left(\mathbb{R}_{+};  \dot{B}_{p, 1}^{-1+N / p}(\mathbb{R}_{+}^N)\right)$,  $\left. q\right|_{y_N=0} \in \dot{F}_{1,1}^{1 / 2-1 / 2 p}\left(\mathbb{R}_{+};  \dot{B}_{p, 1}^{-1+N / p}\left(\mathbb{R}^{N-1}\right)\right)$ $\cap$
		$L^1\left(\mathbb{R}_{+};  \dot{B}_{p, 1}^{(N-1) / p}\left(\mathbb{R}^{N-1}\right)\right)$. For the boundary terms $V_{7}(u)$
		and $V_{8} (u,q)$ defined by (\ref{1.12}), the following estimates hold:
			\begin{align*}
				\left\|V_{7} (u)\right\|_{\dot{F}_{1,1}^{\frac{1}{2}-\frac{1}{2 p}} (\mathbb{R}_{+};  \dot{B}_{p, 1}^{-1+\frac{N}{p}}(\mathbb{R}^{N-1}))}
				& \lesssim \sum_{r=2}^{2 N-1}\left(\left\|\partial_t u\right\|_{L^1(\mathbb{R}_{+};  \dot{B}_{p, 1}^{-1+\frac{N}{p}}(\mathbb{R}_{+}^N))}+\left\|D^2 u\right\|_{L^1(\mathbb{R}_{+};  \dot{B}_{p, 1}^{-1+\frac{N}{p}}(\mathbb{R}_{+}^N))}\right)^r,
				\\
				\left\|V_{7} (u)\right\|_{L^1(\mathbb{R}_{+};  \dot{B}_{p, 1}^{\frac{N-1}{p}}(\mathbb{R}^{N-1}))} & \lesssim \sum_{r=2}^{2 N-1}\left\|D^2 u\right\|_{L^1(\mathbb{R}_{+};  \dot{B}_{p, 1}^{-1+\frac{N}{p}}(\mathbb{R}_{+}^N))}^r,
			\end{align*}
				\begin{align*}
			 &\left\|V_{8} (u,q)\right\|_{\dot{F}_{1,1}^{\frac{1}{2}-\frac{1}{2 p}}(\mathbb{R}_{+};  \dot{B}_{p, 1}^{-1+\frac{N}{p}}(\mathbb{R}^{N-1}))}	\nonumber\\
			& \lesssim\left(\left\|\left.q\right|_{y_N=0}\right\|_{\dot{F}_{1,1}^{\frac{1}{2}-\frac{1}{2 p}}(\mathbb{R}_{+};  \dot{B}_{p, 1}^{-1+\frac{N}{p}}(\mathbb{R}^{N-1}))}+\left\|\left.q\right|_{y_N=0}\right\|_{L^1(\mathbb{R}_{+};  \dot{B}_{p, 1}^{-1+\frac{N}{p}}(\mathbb{R}^{N-1}))}\right) \notag\\
			&\quad \times \sum_{r=1}^{N-1}\left(\left\|\partial_t u\right\|_{L^1(\mathbb{R}_{+};  \dot{B}_{p, 1}^{-1+\frac{N}{p}}(\mathbb{R}_{+}^N))}+\left\|D^2 u\right\|_{L^1(\mathbb{R}_{+};  \dot{B}_{p, 1}^{-1+\frac{N}{p}}(\mathbb{R}_{+}^N))}\right)^r,
	\end{align*}
\begin{align*}
			\left\|V_{8} (u,q)\right\|_{L^1(\mathbb{R}_{+};   \dot{B}_{p, 1}^{\frac{N-1}{p}}(\mathbb{R}^{N-1}))}& \lesssim \left\|\left.q\right|_{y_N=0}\right\|_{L^1(\mathbb{R}_{+};  \dot{B}_{p, 1}^{\frac{N-1}{p}}(\mathbb{R}^{N-1})
				)} \sum_{r=1}^{N-1}\left\|D^2 u\right\|_{L^1(\mathbb{R}_{+};  \dot{B}_{p, 1}^{-1+\frac{N}{p}}(\mathbb{R}_{+}^N))}^r.
		\end{align*}
	\end{prop}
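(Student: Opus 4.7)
The plan is to handle the two norms in the target intersection space independently, since the $L^1(\mathbb{R}_{+}; \dot{B}_{p,1}^{(N-1)/p}(\mathbb{R}^{N-1}))$ factor concerns only spatial regularity while the $\dot{F}_{1,1}^{1/2-1/(2p)}(\mathbb{R}_{+}; \dot{B}_{p,1}^{-1+N/p}(\mathbb{R}^{N-1}))$ factor carries parabolic trace-type regularity in time. The underlying algebraic fact driving both estimates is (\ref{1.8}): each entry of $A-I$ is a polynomial in $\int_0^t Du\,d\tau$ of degree at most $N-1$ with no constant term, so every term of $V_7(u)$ contributes at least one factor of $\int_0^t Du\,d\tau$ together with one $\nabla u$ factor, while every term of $V_8(u,q)$ is $q$ multiplied by at least one factor of $\int_0^t Du\,d\tau$. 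This combinatorial structure accounts for the summation ranges $r\in\{2,\dots,2N-1\}$ for $V_7$ and $r\in\{1,\dots,N-1\}$ for $V_8$.

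For the $L^1(\mathbb{R}_{+}; \dot{B}_{p,1}^{(N-1)/p}(\mathbb{R}^{N-1}))$ estimates, I would compose with the Besov trace embedding $\dot{B}_{p,1}^{N/p}(\mathbb{R}_{+}^N)\hookrightarrow \dot{B}_{p,1}^{(N-1)/p}(\mathbb{R}^{N-1})$ and work on the full half-space, where the algebra property of $\dot{B}_{p,1}^{N/p}$ (Proposition \ref{Aprop}) applies. The multilinear expansion strategy already used in (\ref{3.1})--(\ref{3.5}) then yields the desired bounds: control each factor $\int_0^t Du\,d\tau$ via $\|\int_0^t Du\,d\tau\|_{L^{\infty}(\mathbb{R}_{+};\dot{B}_{p,1}^{N/p}(\mathbb{R}_{+}^N))}\lesssim \|D^2 u\|_{L^1(\mathbb{R}_{+};\dot{B}_{p,1}^{-1+N/p}(\mathbb{R}_{+}^N))}$ and pair with one $\nabla u$ (for $V_7$) or the given $q|_{y_N=0}\in L^1(\mathbb{R}_{+};\dot{B}_{p,1}^{(N-1)/p}(\mathbb{R}^{N-1}))$ (for $V_8$). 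No $\|\partial_t u\|$ contribution appears, matching the statement.

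For the $\dot{F}_{1,1}^{1/2-1/(2p)}(\mathbb{R}_{+};\dot{B}_{p,1}^{-1+N/p}(\mathbb{R}^{N-1}))$ estimates, the starting point is the parabolic trace theorem (which is built into the sharp side of Theorem \ref{thm2.1}): for $f$ with $\partial_t f,\,D^2 f\in L^1(\mathbb{R}_{+};\dot{B}_{p,1}^{-1+N/p}(\mathbb{R}_{+}^N))$, the trace $\nabla f|_{y_N=0}$ lies in the target space with controlled norm. In particular, $\nabla u|_{y_N=0}$ is bounded by $\|\partial_t u\|_{L^1(\mathbb{R}_{+};\dot{B}_{p,1}^{-1+N/p}(\mathbb{R}_{+}^N))}+\|D^2 u\|_{L^1(\mathbb{R}_{+};\dot{B}_{p,1}^{-1+N/p}(\mathbb{R}_{+}^N))}$, which explains the appearance of $\|\partial_t u\|$ on the right-hand side of the stated bound. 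A bilinear product estimate in the intersection space allows multiplication by each $(A-I)$-type factor, whose powers are controlled via their boundary values in a multiplier space of type $L^{\infty}(\mathbb{R}_{+};\dot{B}_{p,1}^{(N-1)/p}(\mathbb{R}^{N-1}))$. For $V_8=qI(A-I)^{\top}\mathbf{n}$, the factor $q|_{y_N=0}$ is already hypothesized to lie in the target intersection space, so only the $(A-I)$-type multipliers have to be analyzed, producing the product structure in the stated bound.

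The main obstacle will be justifying the bilinear product estimate in the mixed time-fractional and spatial-Besov intersection norm, and tracking how $\partial_t$ distributes across factors. A naive Leibniz expansion produces terms like $\nabla\partial_t u$, whose spatial gradient is not directly controlled in the working scale. The remedy, developed in \cite{TS,TS1} for the closely related Navier-Stokes half-space problem, is to pair $\partial_t u\in L^1(\mathbb{R}_{+};\dot{B}_{p,1}^{-1+N/p}(\mathbb{R}_{+}^N))$ with $(A-I)$ in the higher spatial regularity $L^{\infty}(\mathbb{R}_{+};\dot{B}_{p,1}^{N/p}(\mathbb{R}_{+}^N))$ (obtained from the fundamental theorem of calculus applied to $\int_0^t Du\,d\tau$), preserving the scaling. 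Once this bilinear estimate is in hand, the proposition follows by induction on the polynomial degree in $\int_0^t Du\,d\tau$, in the same spirit as (\ref{3.1})--(\ref{3.18}).
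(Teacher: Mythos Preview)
Your proposal is correct and matches the approach of the cited reference \cite{TS}, which is precisely what the paper relies on: the proposition is stated with ``cf.\ \cite{TS}'' and the paper gives no independent proof (the analogous contraction estimate for $\mathbf{f}_5$ in Section~\ref{sec4} is likewise deferred, ``the detailed proof was given in \cite{TS}''). The two ingredients you identify---the parabolic trace estimate placing $\nabla u|_{y_N=0}$ in $\dot{F}_{1,1}^{1/2-1/2p}(\mathbb{R}_+;\dot{B}_{p,1}^{-1+N/p}(\mathbb{R}^{N-1}))\cap L^1(\mathbb{R}_+;\dot{B}_{p,1}^{(N-1)/p}(\mathbb{R}^{N-1}))$ under control of $\|\partial_t u\|+\|D^2 u\|$ (the one-derivative-down analogue of Theorem~\ref{thmA1}), and the bilinear product estimate in that intersection space allowing multiplication by the $L^\infty(\mathbb{R}_+;\dot{B}_{p,1}^{N/p})$ factors coming from $A-I$---are exactly the tools developed in \cite{TS}, and your handling of the polynomial structure via (\ref{1.8}) mirrors the bulk estimates (\ref{3.1})--(\ref{3.18}) in the present paper.
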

	
	\section{Completion of the proof of Theorem  \ref{THM1}}\label{sec4}
	We give an iteration scheme to prove Theorem \ref{THM1} by the Banach fixed point theorem.
	We define an underlying space $\mathbf{U}$ by
	\begin{align*}
		\mathbf{U}= & \bigg\{(u, q, B) :
		u,B \in C\left(\overline{\mathbb{R}_{+}};  \dot{B}_{p, 1}^{-1+\frac{N}{p}}(\mathbb{R}_{+}^N)\right) \cap \dot{W}^{1,1}\left(\mathbb{R}_{+};  \dot{B}_{p, 1}^{-1+\frac{N}{p}}(\mathbb{R}_{+}^N)\right),\\
		&\qquad (D^{2} u, D^{2} B, \nabla q) \in L^1\left(\mathbb{R}_{+};  \dot{B}_{p, 1}^{-1+\frac{N}{p}}(\mathbb{R}_{+}^N)\right), \\
		&\qquad\left.q\right|_{y_N=0} \in \dot{F}_{1,1}^{\frac{1}{2}-\frac{1}{2 p}}\left(\mathbb{R}_{+};  \dot{B}_{p, 1}^{-1+\frac{N}{p}}\left(\mathbb{R}^{N-1}\right)\right) \cap L^1\left(\mathbb{R}_{+};  \dot{B}_{p, 1}^{\frac{N-1}{p}}\left(\mathbb{R}^{N-1}\right)\right), \\
		&\qquad \|(u, q, B)\|_U\leq L\bigg\},
	\end{align*}
	where the metric in $\mathbf{U}$ is induced by the norm
	\begin{align*}
		 \|(u, q, B)\|_{\mathbf{U}} \equiv&\left\|\partial_t u\right\|_{L^1(\mathbb{R}_{+};  \dot{B}_{p, 1}^{-1+\frac{N}{p}}(\mathbb{R}_{+}^N))}+\left\|D^2 u\right\|_{L^1(\mathbb{R}_{+};  \dot{B}_{p, 1}^{-1+\frac{N}{p}}(\mathbb{R}_{+}^N))}+\left\|\partial_t B\right\|_{L^1(\mathbb{R}_{+};  \dot{B}_{p, 1}^{-1+\frac{N}{p}}(\mathbb{R}_{+}^N))}\\
		 &
		+\left\|D^2 B\right\|_{L^1(\mathbb{R}_{+};  \dot{B}_{p, 1}^{-1+\frac{N}{p}}(\mathbb{R}_{+}^N))} 
		+\|\nabla q\|_{L^1(\mathbb{R}_{+};  \dot{B}_{p, 1}^{-1+\frac{N}{p}}(\mathbb{R}_{+}^N))}\\
		& +\left\|\left.q\right|_{y_N=0}\right\|_{\dot{F}_{1,1}^{1 / 2-1 / 2 p}(\mathbb{R}_{+};  \dot{B}_{p, 1}^{-1+\frac{N}{p}}(\mathbb{R}^{N-1}))}+\left\| \left.q\right|_{y_N=0}\right\|_{L^1(\mathbb{R}_{+};  \dot{B}_{p, 1}^{\frac{N-1}{p}}(\mathbb{R}^{N-1}))},
	\end{align*}
	and an absolute constant $L > 0$ is chosen to be sufficiently small, depending on $N$ and $p$.
	
	For each $(\tilde{u},\tilde{B}, \tilde{q})\in \mathbf{U}$, we define the map $\Xi[\cdot]$ through the solution $(u, q, B)$ of
	\begin{equation}\label{4.1}
		\left\{\begin{aligned}
			&\partial_t u-\Delta u+\nabla q=V_{1}(\tilde{u})+V_{2}(\tilde{u}, \tilde{q})+V_{3}(\tilde{u},\tilde{B}),  & &\quad \text{in} \quad \mathbb{R}_{+}\times \mathbb{R}_{+}^N ,
			\\ & \partial_t B-\Delta B=V_{4}(\tilde{u},\tilde{B}), & & \quad \text{in}\quad  \mathbb{R}_{+}\times \mathbb{R}_{+}^N ,
			\\
			& \operatorname{div} u=V_{5}(\tilde{u}), \quad \operatorname{div} B=V_{6}(\tilde{u},\tilde{B}), & & \quad \text{in} \quad \mathbb{R}_{+}\times \mathbb{R}_{+}^N ,
			\\
			&\left(\mathbf{D}(u)-q I\right) \mathbf{n}=V_{7}(\tilde{u})+V_{8}(\tilde{u}, \tilde{q}),& & \quad \text{on}\quad \mathbb{R}_{+}\times \partial \mathbb{R}_{+}^N, \\
			&B=0, & & \quad \text{on}\quad  \mathbb{R}_{+}\times \partial \mathbb{R}_{+}^N,\\
			&u(0, y)=u_0(y), \quad B(0, y)=B_0(y),  & & \quad \text{in}\quad \mathbb{R}_{+}^N,
		\end{aligned}\right.
	\end{equation}
	where
		\begin{align}\label{4.2}
			& V_1(u)=\sum_{k=1}^N \tilde{P}_k^i \partial_i\left( \tilde{P}_k^j \partial_j \tilde{u}\right)+\sum_{j=1}^N\tilde{P}_{j}^{i}\partial_{i}\partial_{j}\tilde{u}
			+\sum_{i=1}^N \partial_{i}(\tilde{P}_{i}^{j}\partial_{j}\tilde{u}), \quad V_{2i}(\tilde{u}, \tilde{q})=-\tilde{P}^{j}_{i} \partial_{j}\tilde{q}, \nonumber\\&  V_3(\tilde{u}, \tilde{B})=\tilde{B}^{i}\partial_{i}\tilde{B}+\tilde{B}^{i}\tilde{P}_{i}^{j}\partial_{j}\tilde{B}, \nonumber\\&
			V_4(\tilde{u}, \tilde{B})=\sum_{k=1}^N \tilde{P}_k^i \partial_i\left( \tilde{P}_k^j \partial_j \tilde{B}\right)+\sum_{j=1}^N\tilde{P}_{j}^{i}\partial_{i}\partial_{j}\tilde{B}
			+\sum_{i=1}^N \partial_{i}(\tilde{P}_{i}^{j}\partial_{j}\tilde{B})+ \tilde{B}^{i}\partial_{i}\tilde{u}+\tilde{B}^{i}\tilde{P}_{i}^{j}\partial_{j}\tilde{u},\nonumber \\
			& V_5(\tilde{u})=\tilde{P}^{j}_{i} \partial_j \tilde{u}^{i}, \quad V_6(\tilde{u}, \tilde{B})= \tilde{P}^{k}_{i} \partial_k \tilde{B}^{i},\nonumber \\
			& V_7(\tilde{u})=-\tilde{A}^{\top} \nabla \tilde{u}+(\nabla \tilde{u})^{\top} \tilde{A}(\tilde{A}-I)^{\top} \mathbf{n}-\left((\tilde{A}-I)^{\top} \nabla \tilde{u}+(\nabla \tilde{u})^{\top}(\tilde{A}-I)\right) \mathbf{n},\nonumber \\
			& V_8(\tilde{u}, \tilde{q})=\tilde{q} I(\tilde{A}-I)^{\top} \mathbf{n},
		\end{align}
	with $\tilde{A}$ and $\tilde{P}$ corresponding to $A$ and $P$,  respectively, in which $u$ is replaced  by $\tilde{u}$. For instance,
	$$
	\tilde{P}_{j}^{i}=P_{j}^{i}\left(\int_0^t \nabla_i^{\prime} \tilde{u}_1 d \tau, \ldots, \int_0^t \nabla_i^{\prime} \tilde{u}_{j-1} d \tau, \int_0^t \nabla_i^{\prime} \tilde{u}_{j+1} d \tau, \ldots, \int_0^t \nabla_i^{\prime} \tilde{u}_N d \tau\right).
	$$
	
	Then, we show that the map given by $\Xi[\tilde{u},\tilde{q},\tilde{B}] = (u,q,B)$ is a contraction mapping from $\mathbf{U}$ to itself.
	
	Applying Theorem \ref{thm2.1} and (\ref{2.16}) to $(u, q, B)$, we obtain
		\begin{align*}
			& \left\|\partial_t u\right\|_{L^1(\mathbb{R}_{+};  \dot{B}_{p, 1}^{-1+\frac{N}{p}}(\mathbb{R}_{+}^N))}+\left\|D^2 u\right\|_{L^1(\mathbb{R}_{+};  \dot{B}_{p, 1}^{-1+\frac{N}{p}}(\mathbb{R}_{+}^N))}+\left\|\partial_t B\right\|_{L^1(\mathbb{R}_{+};  \dot{B}_{p, 1}^{-1+\frac{N}{p}}(\mathbb{R}_{+}^N))}\\
			&+\left\|D^2 B\right\|_{L^1(\mathbb{R}_{+};  \dot{B}_{p, 1}^{-1+\frac{N}{p}}(\mathbb{R}_{+}^N))} +\|\nabla q\|_{L^1(\mathbb{R}_{+};  \dot{B}_{p, 1}^{-1+\frac{N}{p}}(\mathbb{R}_{+}^N))}\\&+\left\|\left.q\right|_{y_N=0}\right\|_{\dot{F}_{1,1}^{1 / 2-1 / 2 p}(\mathbb{R}_{+};  \dot{B}_{p, 1}^{-1+\frac{N}{p}}(\mathbb{R}^{N-1}))}+\left\|\left.q\right|_{y_N=0}\right\|_{L^1(\mathbb{R}_{+};  \dot{B}_{p, 1}^{\frac{N-1}{p}}(\mathbb{R}^{N-1}))}
			\\ & \lesssim\left\|u_0\right\|_{\dot{B}_{p, 1}^{-1+\frac{N}{p}}(\mathbb{R}_{+}^N)}+\left\|B_0\right\|_{\dot{B}_{p, 1}^{-1+\frac{N}{p}}(\mathbb{R}_{+}^N)}
			+\left\|(V_{1}(\tilde{u}),V_{2}(\tilde{u},\tilde{q}),V_{3}(\tilde{u},\tilde{B}),V_{4}(\tilde{u},\tilde{B}))\right\|_{L^1(\mathbb{R}_{+};  \dot{B}_{p, 1}^{-1+\frac{N}{p}}(\mathbb{R}_{+}^N))}
			\\&\quad +\left\|\nabla V_{5}(\tilde{u})\right\|_{L^1(\mathbb{R}_{+};  \dot{B}_{p, 1}^{-1+\frac{N}{p}}(\mathbb{R}_{+}^N))}+\left\|\partial_t(-\Delta)^{-1} \nabla V_{5}(\tilde{u})\right\|_{L^1(\mathbb{R}_{+};  \dot{B}_{p, 1}^{-1+\frac{N}{p}}(\mathbb{R}_{+}^N))}
			\\&\quad +\left\|\nabla V_{6}(\tilde{u},\tilde{B})\right\|_{L^1(\mathbb{R}_{+};  \dot{B}_{p, 1}^{-1+\frac{N}{p}}(\mathbb{R}_{+}^N))}+\left\|\partial_t(-\Delta)^{-1} \nabla V_{6}(\tilde{u},\tilde{B})\right\|_{L^1(\mathbb{R}_{+};  \dot{B}_{p, 1}^{-1+\frac{N}{p}}(\mathbb{R}_{+}^N))}
			\\& \quad +\left\|V_7(\tilde{u})\right\|_{\dot{F}_{1,1}^{\frac{1}{2}-\frac{1}{2 p}}(\mathbb{R}_{+};  \dot{B}_{p, 1}^{-1+\frac{N}{p}}(\mathbb{R}^{N-1}))}+\left\|V_7(\tilde{u})\right\|_{L^1(\mathbb{R}_{+};  \dot{B}_{p, 1}^{\frac{N-1}{p}}(\mathbb{R}^{N-1}))} \\
			& \quad+\left\|V_8(\tilde{u}, \tilde{q})\right\|_{\dot{F}_{1,1}^{\frac{1}{2}-\frac{1}{2 p}}(\mathbb{R}_{+};  \dot{B}_{p, 1}^{-1+\frac{N}{p}}(\mathbb{R}^{N-1}))}+\left\|V_8(\tilde{u}, \tilde{q})\right\|_{L^1(\mathbb{R}_{+};  \dot{B}_{p, 1}^{\frac{N-1}{p}}(\mathbb{R}^{N-1}))}.
		\end{align*}
	Combining (\ref{3.5})-(\ref{3.7}), (\ref{3.13})-(\ref{3.15}), (\ref{3.17}), (\ref{3.18}) with Proposition \ref{prop1}, we have
	\begin{align*}
		& \left\|\partial_t u\right\|_{L^1(\mathbb{R}_{+};  \dot{B}_{p, 1}^{-1+\frac{N}{p}}(\mathbb{R}_{+}^N))}+\left\|D^2 u\right\|_{L^1(\mathbb{R}_{+};  \dot{B}_{p, 1}^{-1+\frac{N}{p}}(\mathbb{R}_{+}^N))}+\left\|\partial_t B\right\|_{L^1(\mathbb{R}_{+};  \dot{B}_{p, 1}^{-1+\frac{N}{p}}(\mathbb{R}_{+}^N))}\\
		&+\left\|D^2 B\right\|_{L^1(\mathbb{R}_{+};  \dot{B}_{p, 1}^{-1+\frac{N}{p}}(\mathbb{R}_{+}^N))} +\|\nabla q\|_{L^1(\mathbb{R}_{+};  \dot{B}_{p, 1}^{-1+\frac{N}{p}}(\mathbb{R}_{+}^N))}\\&+\left\|\left.q\right|_{y_N=0}\right\|_{\dot{F}_{1,1}^{1 / 2-1 / 2 p}(\mathbb{R}_{+};  \dot{B}_{p, 1}^{-1+\frac{N}{p}}(\mathbb{R}^{N-1}))}+\left\|\left.q\right|_{y_N=0}\right\|_{L^1(\mathbb{R}_{+};  \dot{B}_{p, 1}^{\frac{N-1}{p}}(\mathbb{R}^{N-1}))}
		\\ & \lesssim\left\|u_0\right\|_{\dot{B}_{p, 1}^{-1+\frac{N}{p}}(\mathbb{R}_{+}^N)}+\left\|B_0\right\|_{\dot{B}_{p, 1}^{-1+\frac{N}{p}}(\mathbb{R}_{+}^N)}+\sum_{r=2}^{2N-1}L^{r}.
	\end{align*}
	
	Hence, we obtain that	
	\begin{equation}
		\|\Xi[\tilde{u}, \tilde{q},\tilde{B}]\|_\mathbf{U} \lesssim\left\|u_0\right\|_{\dot{B}_{p, 1}^{-1+\frac{N}{p}}(\mathbb{R}_{+}^N)}+\left\|B_0\right\|_{\dot{B}_{p, 1}^{-1+\frac{N}{p}}(\mathbb{R}_{+}^N)}+\sum_{r=2}^{2 N-1} L^{r}.
	\end{equation}
	If we choose $\left\|u_0\right\|_{\dot{B}_{p, 1}^{-1+N / p}}$ and $\left\|B_0\right\|_{\dot{B}_{p, 1}^{-1+N / p}}$ sufficiently small, we conclude that
	$$
	\|\Xi[\tilde{u}, \tilde{q},\tilde{B}]\|_\mathbf{U} \leq L.
	$$
	
	If we establish  that $\Xi$ is a contraction mapping, then by the contraction mapping principle, we conclude the proof of Theorem \ref{THM1}.
	
	Let $(\tilde{u}_{i}, \tilde{q}_{i}, \tilde{B}_{i} ) \in  \mathbf{U}$ ($i = 1,2$). We shall mainly estimate $\|(u_{1}-u_{2}, q_{1}-q_{2}, B_{1}-B_{2} )\|_\mathbf{U}$ with $(u_{i}, q_{i}, B_{i}) = \Xi(\tilde{u}_{i}, \tilde{q}_{i}, \tilde{B}_{i} )$, then we shall prove that $\Xi$ is a contraction mapping on $\mathbf{U}$. We set
	$$
	u^{*}=u_{1}-u_{2}, \quad q^{*}=q_{1}-u_{2}, \quad  B^{*}=B_{1}-B_{2},
	$$
	then by (\ref{4.1}), $u^{*}$, $q^{*}$ and $B^{*}$ satisfy the following equations:
	\begin{equation}\label{4.6}
		\left\{\begin{aligned}
			&\partial_t u^{*}-\Delta u^{*}+\nabla q^{*}=\mathbf{f}_{1},  & &\quad \text{in} \quad \mathbb{R}_{+}\times \mathbb{R}_{+}^N ,
			\\ & \partial_t B^{*}-\Delta B^{*}=\mathbf{f}_{2}, & & \quad \text{in}\quad  \mathbb{R}_{+}\times \mathbb{R}_{+}^N ,
			\\
			& \operatorname{div} u^{*}=\mathrm{f}_{3}, \quad \operatorname{div} B^{*}=\mathrm{f}_{4}, & & \quad \text{in} \quad \mathbb{R}_{+}\times \mathbb{R}_{+}^N ,
			\\
			&\left(\mathbf{D}(u^{*}-q^{*} I\right) \mathbf{n}=\mathbf{f}_{5},& & \quad \text{on}\quad \mathbb{R}_{+}\times \partial \mathbb{R}_{+}^N, \\
			&B^{*}=0, & & \quad \text{on}\quad  \mathbb{R}_{+}\times \partial \mathbb{R}_{+}^N,\\
			&u^{*}(0, y)=0, \quad B^{*}(0, y)=0, & & \quad \text{in}\quad \mathbb{R}_{+}^N,
		\end{aligned}\right.
	\end{equation}
	where
		\begin{align}\label{4.7}
			&\mathbf{f}_{1}=V_{1}(\tilde{u}_{1})+V_{2}(\tilde{u}_{1}, \tilde{q}_{1})+V_{3}(\tilde{u}_{1},\tilde{B}_{1})-V_{1}(\tilde{u}_{2})-V_{2}(\tilde{u}_{2}, \tilde{q}_{2})-V_{3}(\tilde{u}_{2},\tilde{B}_{2}),\nonumber
			\\& \mathbf{f}_{2}=V_{4}(\tilde{u}_{1},\tilde{B}_{1})-V_{4}(\tilde{u}_{2},\tilde{B}_{2}),\quad \mathrm{f}_{3}=V_{5}(\tilde{u}_{1})-V_{5}(\tilde{u}_{2}), \quad \mathrm{f}_{4}=V_{6}(\tilde{u}_{1},\tilde{B}_{1})-V_{6}(\tilde{u}_{2},\tilde{B}_{2}),\nonumber\\
			&\mathbf{f}_{5}=V_{7}(\tilde{u}_{1})+V_{8}(\tilde{u}_{1}, \tilde{q}_{1})-V_{7}(\tilde{u}_{2})-V_{8}(\tilde{u}_{2}, \tilde{q}_{2}).
		\end{align}
	
	We have to estimate the nonlinear terms in (\ref{4.7}). Firstly, let $\tilde{u}_{i,j}$ denote  the $j$-th component of $\tilde{u}_{i}$ with $i=1,2$ and $j=1,\cdots,N$. Let $k=1,2$, we set
	$$
	\left(\int_0^t \nabla_i^{\prime} \tilde{u}_{k,1} d \tau, \ldots, \int_0^t \nabla_i^{\prime} \tilde{u}_{k, j-1}d \tau, \int_0^t \nabla_i^{\prime} \tilde{u}_{k,j+1} d \tau, \ldots, \int_0^t \nabla_i^{\prime} \tilde{u}_{k,N} d \tau\right)=:X_{k,ij}.
	$$
	Since $P_{j}^{i}$ is a polynomial of degree $(N-1)$ without a constant term, we can get
	\begin{equation}\label{4.8}
		P_{j}^{i}(X_{1,ij})-P_{j}^{i}(X_{2,ij})=\int_0^1\left(d_{\mathbf{K}} P_{j}^{i}\right)
		\left(X_{2,ij}+\theta \left(X_{1,ij}-X_{2,ij}\right)\right) d \theta \left(X_{1,ij}-X_{2,ij}\right),
	\end{equation}
	where  $d_{\mathbf{K}}P_{j}^{i}({\mathbf{K}})$ is the derivative of $P_{j}^{i}({\mathbf{K}})$  with respect to $\mathbf{K}$. By (\ref{4.2}), we consider the following formula:
		\begin{align}\label{4.9}
			&\sum_{k=1}^N P_k^i(X_{1,ik})\partial_i\left( P_k^j (X_{1,jk}) \partial_j \tilde{u}_{1}\right)- \sum_{k=1}^N P_k^i(X_{2,ik}) \partial_i\left( P_k^j (X_{2,jk}) \partial_j \tilde{u}_{2}\right) \nonumber
			\\&=\sum_{k=1}^N \bigg(\left(P_k^i(X_{1,ik})-P_k^i(X_{2,ik})\right)\partial_i\left( P_k^j (X_{1,jk}) \partial_j \tilde{u}_{1}\right)\nonumber\\
			&+ P_k^i(X_{2,ik}) \partial_i\left(P_k^j (X_{1,jk}) \partial_j \tilde{u}_{1}-P_k^j (X_{2,jk}) \partial_j \tilde{u}_{2}\right)\bigg)\nonumber
			\\
			&=\sum_{k=1}^N \bigg(\left(P_k^i(X_{1,ik})-P_k^i(X_{2,ik})\right)\partial_i\left( P_k^j (X_{1,jk}) \partial_j \tilde{u}_{1}\right)\nonumber\\
			&+P_k^i(X_{2,ik}) \partial_i\left(\left(P_k^j (X_{1,jk})- P_k^j (X_{2,jk})\right)\partial_j \tilde{u}_{1}-P_k^j (X_{2,jk}) \left(\partial_j \tilde{u}_{1}-\partial_j \tilde{u}_{2}\right)\right)\bigg).
		\end{align}
	From (\ref{3.2}), (\ref{3.3}) and (\ref{4.8}), we have
		\begin{align}\label{4.10}
			&\left\|\left(P_k^i(X_{1,ik})-P_k^i(X_{2,ik})\right)\partial_i\left( P_k^j (X_{1,jk}) \partial_j \tilde{u}_{1}\right)\right\|_{L^1(\mathbb{R}_{+};  \dot{B}_{p, 1}^{-1+\frac{N}{p}}(\mathbb{R}_{+}^N))} \nonumber\\
			&\lesssim
			\left\|\int_0^1\left(d_{\mathbf{K}} P_{k}^{i}\right)
			\left(X_{2,ik}+\theta \left(X_{1,ik}-X_{2,ik}\right)\right) d \theta \partial_i\left( P_k^j (X_{1,jk}) \partial_j \tilde{u}_{1}\right)\right\|_{L^1(\mathbb{R}_{+};  \dot{B}_{p, 1}^{-1+\frac{N}{p}}(\mathbb{R}_{+}^N))}\nonumber\\
			& \qquad\times\left\|X_{1,ik}-X_{2,ik}\right\|_{L^\infty(\mathbb{R}_{+};  \dot{B}_{p, 1}^{\frac{N}{p}}(\mathbb{R}_{+}^N))}\nonumber
			\\&\lesssim \max_{k=1,2}\sum_{r=0}^{N-2}\left\|D^2 \tilde{u}_{k}\right\|_{L^1(\mathbb{R}_{+};  \dot{B}_{p, 1}^{-1+\frac{N}{p}}(\mathbb{R}_{+}^N))}^r\sum_{r=2}^{N}\left\|D^2 \tilde{u}_{1}\right\|_{L^1(\mathbb{R}_{+};  \dot{B}_{p, 1}^{-1+\frac{N}{p}}(\mathbb{R}_{+}^N))}^r \nonumber\\
			&\qquad\times\left\| D^{2}(\tilde{u}_{1}-\tilde{u}_{2})d\tau \right\|_{L^1(\mathbb{R}_{+};  \dot{B}_{p, 1}^{-1+\frac{N}{p}}(\mathbb{R}_{+}^N))}\nonumber
			\\&\lesssim \max_{k=1,2}\sum_{r=2}^{2N-2}\left\|D^2 \tilde{u}_{k}\right\|_{L^1(\mathbb{R}_{+};  \dot{B}_{p, 1}^{-1+\frac{N}{p}}(\mathbb{R}_{+}^N))}^r\left\| D^{2}(\tilde{u}_{1}-\tilde{u}_{2})\right\|_{L^1(\mathbb{R}_{+};  \dot{B}_{p, 1}^{-1+\frac{N}{p}}(\mathbb{R}_{+}^N))}.
		\end{align}
	Then, by (\ref{3.1}),  we have
		\begin{align}\label{4.11}
			&\left\|P_k^i(X_{2,ik}) \partial_i\left(\left(P_k^j (X_{1,jk})- P_k^j (X_{2,jk})\right)\partial_j \tilde{u}_{1}\right)\right\|_{L^1(\mathbb{R}_{+};  \dot{B}_{p, 1}^{-1+\frac{N}{p}}(\mathbb{R}_{+}^N))} \nonumber \\&\lesssim  \left\| \partial_i\left(\left(P_k^j (X_{1,jk})- P_k^j (X_{2,jk})\right)\partial_j \tilde{u}_{1}\right)\right\|_{L^1(\mathbb{R}_{+};  \dot{B}_{p, 1}^{-1+\frac{N}{p}}(\mathbb{R}_{+}^N))}\sum_{r=1}^{N-1}\left\|D^2 \tilde{u}_{2}\right\|_{L^1(\mathbb{R}_{+};  \dot{B}_{p, 1}^{-1+\frac{N}{p}}(\mathbb{R}_{+}^N))}^r.
		\end{align}
	By Proposition \ref{Aprop} and (\ref{4.8}), we have
	\begin{align}
		&\left\|\partial_{i} (P_k^j (X_{1,jk})-P_k^j (X_{2,jk}))\partial_{j}\tilde{u}_{1}\right\|_{L^1(\mathbb{R}_{+};  \dot{B}_{p, 1}^{-1+\frac{N}{p}}(\mathbb{R}_{+}^N))}
		\notag\\&\lesssim \left\|\partial_{i}\left( \int_0^1\left(d_{\mathbf{K}} P_{k}^{j}\right)
		\left(X_{2,jk}+\theta \left(X_{1,jk}-X_{2,jk}\right)\right) d \theta \left(X_{1,jk}-X_{2,jk}\right)\right)\partial_{j}\tilde{u}_{1}\right\|_{L^1(\mathbb{R}_{+};  \dot{B}_{p, 1}^{-1+\frac{N}{p}}(\mathbb{R}_{+}^N))}
		\notag\\&\lesssim \max_{k=1,2}\sum_{r=0}^{N-3}\left\|D^2 \tilde{u}_{k}\right\|_{L^1(\mathbb{R}_{+};  \dot{B}_{p, 1}^{-1+\frac{N}{p}}(\mathbb{R}_{+}^N))}^r
		\max_{k=1,2}\left\|D^2 \tilde{u}_{k}\right\|_{L^1(\mathbb{R}_{+};  \dot{B}_{p, 1}^{-1+\frac{N}{p}}(\mathbb{R}_{+}^N))}
		\left\|\nabla\tilde{u}_{1} \right\|_{L^1(\mathbb{R}_{+};  \dot{B}_{p, 1}^{\frac{N}{p}}(\mathbb{R}_{+}^N))}\nonumber\\
		&\qquad\times\left\|X_{1,jk}-X_{2,jk}\right\|_{L^\infty(\mathbb{R}_{+};  \dot{B}_{p, 1}^{\frac{N}{p}}(\mathbb{R}_{+}^N))}
		\notag\\&\quad+\max_{k=1,2}\sum_{r=0}^{N-2}\left\|D^2 \tilde{u}_{k}\right\|_{L^1(\mathbb{R}_{+};  \dot{B}_{p, 1}^{-1+\frac{N}{p}}(\mathbb{R}_{+}^N))}^r
		\left\|\partial_{i}(X_{1,jk}-X_{2,jk}) \right\|_{L^\infty(\mathbb{R}_{+};  \dot{B}_{p, 1}^{-1+\frac{N}{p}}(\mathbb{R}_{+}^N))}\nonumber\\
		&\qquad\times
		\left\|\nabla\tilde{u}_{1} \right\|_{L^1(\mathbb{R}_{+};  \dot{B}_{p, 1}^{\frac{N}{p}}(\mathbb{R}_{+}^N))}
		\notag\\&\lesssim \max_{k=1,2}\sum_{r=1}^{N-1}\left\|D^2 \tilde{u}_{k}\right\|_{L^1(\mathbb{R}_{+};  \dot{B}_{p, 1}^{-1+\frac{N}{p}}(\mathbb{R}_{+}^N))}^r\left\| D^{2}(\tilde{u}_{1}-\tilde{u}_{2})\right\|_{L^1(\mathbb{R}_{+};  \dot{B}_{p, 1}^{-1+\frac{N}{p}}(\mathbb{R}_{+}^N))},\label{4.12}
	\end{align}
	and
		\begin{align}\label{4.13}
			&\left\| (P_k^j (X_{1,jk})-P_k^j (X_{2,jk}))\partial_{i}\partial_{j}\tilde{u}_{1}\right\|_{L^1(\mathbb{R}_{+};  \dot{B}_{p, 1}^{-1+\frac{N}{p}}(\mathbb{R}_{+}^N))} \nonumber
			\\ & \lesssim
			\left\|\int_0^1\left(d_{\mathbf{K}} P_{k}^{j}\right)
			\left(X_{2,jk}+\theta \left(X_{1,jk}-X_{2,jk}\right)\right) d \theta \partial_i \partial_j \tilde{u}_{1}\right\|_{L^1(\mathbb{R}_{+};  \dot{B}_{p, 1}^{-1+\frac{N}{p}}(\mathbb{R}_{+}^N))}\nonumber\\
			&\qquad\times \left\|X_{1,ik}-X_{2,ik}\right\|_{L^\infty(\mathbb{R}_{+};  \dot{B}_{p, 1}^{\frac{N}{p}}(\mathbb{R}_{+}^N))}\nonumber
			\\&\lesssim \max_{k=1,2}\sum_{r=1}^{N-1}\left\|D^2 \tilde{u}_{k}\right\|_{L^1(\mathbb{R}_{+};  \dot{B}_{p, 1}^{-1+\frac{N}{p}}(\mathbb{R}_{+}^N))}^r\left\| D^{2}(\tilde{u}_{1}-\tilde{u}_{2})\right\|_{L^1(\mathbb{R}_{+};  \dot{B}_{p, 1}^{-1+\frac{N}{p}}(\mathbb{R}_{+}^N))}.
		\end{align}
	
	Then, from (\ref{3.2}) and (\ref{3.3}), we get that
		\begin{align}\label{4.14}
			&\left\|P_k^i(X_{2,ik})\partial_{i}\left(P_k^j (X_{2,jk}) \partial_j (\tilde{u}_{1}- \tilde{u}_{2})\right)\right\|_{L^1(\mathbb{R}_{+};  \dot{B}_{p, 1}^{-1+\frac{N}{p}}(\mathbb{R}_{+}^N))} \nonumber\\&\lesssim \sum_{r=1}^{N-1}\left\|D^2 \tilde{u}_{2}\right\|_{L^1(\mathbb{R}_{+};  \dot{B}_{p, 1}^{-1+\frac{N}{p}}(\mathbb{R}_{+}^N))}^r\left\| \partial_i\left(P_k^j (X_{2,jk})\partial_j (\tilde{u}_{1}- \tilde{u}_{2})\right)\right\|_{L^1(\mathbb{R}_{+};  \dot{B}_{p, 1}^{-1+\frac{N}{p}}(\mathbb{R}_{+}^N))}\nonumber
			\\ & \lesssim  \sum_{r=1}^{N-1}\left\|D^2 \tilde{u}_{2}\right\|_{L^1(\mathbb{R}_{+};  \dot{B}_{p, 1}^{-1+\frac{N}{p}}(\mathbb{R}_{+}^N))}^r \left(\sum_{r=1}^{N-1}\left\|D^2 \tilde{u}_{2}\right\|_{L^1(\mathbb{R}_{+};  \dot{B}_{p, 1}^{-1+\frac{N}{p}}(\mathbb{R}_{+}^N))}^r\left\|\nabla (\tilde{u}_{1}-\tilde{u}_{2})\right\|_{L^1(\mathbb{R}_{+};  \dot{B}_{p, 1}^{\frac{N}{p}}(\mathbb{R}_{+}^N))}\right.\nonumber\\&\quad \times
			\left.\sum_{r=1}^{N-1}\left\|D^2 \tilde{u}_{2}\right\|_{L^1(\mathbb{R}_{+};  \dot{B}_{p, 1}^{-1+\frac{N}{p}}(\mathbb{R}_{+}^N))}^r \left\| D^{2}(\tilde{u}_{1}-\tilde{u}_{2})\right\|_{L^1(\mathbb{R}_{+};  \dot{B}_{p, 1}^{-1+\frac{N}{p}}(\mathbb{R}_{+}^N))}\right)\nonumber
			\\& \lesssim \sum_{r=1}^{2N-2}\left\|D^2 \tilde{u}_{2}\right\|_{L^1(\mathbb{R}_{+};  \dot{B}_{p, 1}^{-1+\frac{N}{p}}(\mathbb{R}_{+}^N))}^r\left\| D^{2}(\tilde{u}_{1}-\tilde{u}_{2})\right\|_{L^1(\mathbb{R}_{+};  \dot{B}_{p, 1}^{-1+\frac{N}{p}}(\mathbb{R}_{+}^N))}.
		\end{align}
	
	Thus, by (\ref{4.9})-(\ref{4.14}), we have
		\begin{align}\label{4.15}
			&\left\|V_{1}(\tilde{u}_{1})-V_{1}(\tilde{u}_{2})\right\|_{L^1(\mathbb{R}_{+};  \dot{B}_{p, 1}^{-1+\frac{N}{p}}(\mathbb{R}_{+}^N))}\nonumber\\
			&\lesssim \max_{k=1,2}\sum_{r=1}^{2N-2}\left\|D^2 \tilde{u}_{k}\right\|_{L^1(\mathbb{R}_{+};  \dot{B}_{p, 1}^{-1+\frac{N}{p}}(\mathbb{R}_{+}^N))}^r\left\| D^{2}(\tilde{u}_{1}-\tilde{u}_{2})\right\|_{L^1(\mathbb{R}_{+};  \dot{B}_{p, 1}^{-1+\frac{N}{p}}(\mathbb{R}_{+}^N))}.
		\end{align}
	
	Employing the same argument as that in proving (\ref{4.15}), we can get
		\begin{align}\label{4.16}
			&\left\|V_{2}(\tilde{u}_{1},\tilde{q}_{1})-V_{2}(\tilde{u}_{2},\tilde{q}_{2})\right\|_{L^1(\mathbb{R}_{+};  \dot{B}_{p, 1}^{-1+\frac{N}{p}}(\mathbb{R}_{+}^N))} \nonumber
			\\&\lesssim\left\|\left(P_i^j(X_{1,ij})-P_i^j(X_{2,ij})\right)\partial_j\tilde{q}_{1}\right\|_{L^1(\mathbb{R}_{+};  \dot{B}_{p, 1}^{-1+\frac{N}{p}}(\mathbb{R}_{+}^N))}+ \left\|P_i^j(X_{2,ij}) \partial_j\left( \tilde{q}_{1}-\tilde{q}_{2}\right)\right\|_{L^1(\mathbb{R}_{+};  \dot{B}_{p, 1}^{-1+\frac{N}{p}}(\mathbb{R}_{+}^N))}\nonumber
			\\&\lesssim \max_{k=1,2}\sum_{r=1}^{2N-2}\left\|D^2 \tilde{u}_{k}\right\|_{L^1(\mathbb{R}_{+};  \dot{B}_{p, 1}^{-1+\frac{N}{p}}(\mathbb{R}_{+}^N))}^r\nonumber\\
			&\qquad\times\bigg(\left\| D^{2}(\tilde{u}_{1}-\tilde{u}_{2})\right\|_{L^1(\mathbb{R}_{+};  \dot{B}_{p, 1}^{-1+\frac{N}{p}}(\mathbb{R}_{+}^N))}+\left\| \nabla(\tilde{q}_{1}-\tilde{q}_{2})\right\|_{L^1(\mathbb{R}_{+};  \dot{B}_{p, 1}^{-1+\frac{N}{p}}(\mathbb{R}_{+}^N))}\bigg).
		\end{align}
	Notice that
	\begin{equation}\label{4.17}
		\begin{aligned}
			&V_{3}(\tilde{u}_{1},\tilde{B}_{1})-V_{3}(\tilde{u}_{2},\tilde{B}_{2})
			\\&=\left(\tilde{B}_{1}^{i}-\tilde{B}_{2}^{i}\right)\partial_{i}\tilde{B}_{1}+\tilde{B}_{2}^{i}\partial_{i}
			\left(\tilde{B}_{1}-\tilde{B}_{2}\right)+\left(\tilde{B}_{1}^{i}-\tilde{B}_{2}^{i}\right)P_{i}^{j}(X_{1,ij})\partial_{j}\tilde{B}_{1}\nonumber\\
			&\qquad
			+\tilde{B}_{2}^{i}\left(P_{i}^{j}(X_{1,ij})\partial_{j}\tilde{B}_{1}-P_{i}^{j}(X_{2,ij})\partial_{j}\tilde{B}_{2}
			\right).
		\end{aligned}
	\end{equation}
	By Propositions \ref{Aprop1} and \ref{Aprop}, we have
		\begin{align}\label{4.18}
			&\left\|\left(\tilde{B}_{1}^{i}-\tilde{B}_{2}^{i}\right)\partial_{i}\tilde{B}_{1}\right\|_{L^1(\mathbb{R}_{+};  \dot{B}_{p, 1}^{-1+\frac{N}{p}}(\mathbb{R}_{+}^N))}+\left\|\tilde{B}_{2}^{i}\partial_{i}
			\left(\tilde{B}_{1}-\tilde{B}_{2}\right)\right\|_{L^1(\mathbb{R}_{+};  \dot{B}_{p, 1}^{-1+\frac{N}{p}}(\mathbb{R}_{+}^N))} \nonumber
			\\ & \lesssim \left\|\nabla\tilde{B}_{1}\right\|_{L^1(\mathbb{R}_{+};  \dot{B}_{p, 1}^{\frac{N}{p}}(\mathbb{R}_{+}^N))}
			\sup_{t>0}\left\|\tilde{B}_{1}-\tilde{B}_{2}\right\|_{\dot{B}_{p, 1}^{-1+\frac{N}{p}}(\mathbb{R}_{+}^N)}\nonumber\\
			&\qquad
			+\left\|\nabla\left(\tilde{B}_{1}-\tilde{B}_{2}\right)\right\|_{L^1(\mathbb{R}_{+};  \dot{B}_{p, 1}^{\frac{N}{p}}(\mathbb{R}_{+}^N))}
			\sup_{t>0}\left\|\tilde{B}_{2}\right\|_{\dot{B}_{p, 1}^{-1+\frac{N}{p}}(\mathbb{R}_{+}^N)}\nonumber
			\\ & \lesssim  \left\|D^{2}\tilde{B}_{1}\right\|_{L^1(\mathbb{R}_{+};  \dot{B}_{p, 1}^{-1+\frac{N}{p}}(\mathbb{R}_{+}^N))}
			\left\|\partial_{t}\left(\tilde{B}_{1}-\tilde{B}_{2}\right)\right\|_{L^1(\mathbb{R}_{+};  \dot{B}_{p, 1}^{-1+\frac{N}{p}}(\mathbb{R}_{+}^N))}\nonumber\\
			&\qquad
		 +\left\|D^{2}\left(\tilde{B}_{1}-\tilde{B}_{2}\right)\right\|_{L^1(\mathbb{R}_{+};  \dot{B}_{p, 1}^{-1+\frac{N}{p}}(\mathbb{R}_{+}^N))}
			\left\|\partial_{t}\tilde{B}_{2}\right\|_{L^1(\mathbb{R}_{+};  \dot{B}_{p, 1}^{-1+\frac{N}{p}}(\mathbb{R}_{+}^N))}.
		\end{align}
	Noting that (\ref{4.17}) has the similar formula to that in (\ref{4.9}), we have that
	\begin{align}
		&\left\|\left(\tilde{B}_{1}^{i}-\tilde{B}_{2}^{i}\right)P_{i}^{j}(X_{1,ij})\partial_{j}\tilde{B}_{1}\right\|_
		{L^1(\mathbb{R}_{+};  \dot{B}_{p, 1}^{-1+\frac{N}{p}}(\mathbb{R}_{+}^N))}
		\notag\\ & \lesssim  \sum_{r=1}^{N-1}\left\|D^2 \tilde{u}_{1}\right\|_{L^1(\mathbb{R}_{+};  \dot{B}_{p, 1}^{-1+\frac{N}{p}}(\mathbb{R}_{+}^N))}^r\left\| \left(\tilde{B}_{1}-\tilde{B}_{2}\right) \nabla \tilde{B}_{1}\right\|_{L^1(\mathbb{R}_{+};  \dot{B}_{p, 1}^{-1+\frac{N}{p}}(\mathbb{R}_{+}^N))}
		\notag\\&\lesssim  \sum_{r=1}^{N-1}\left\|D^2 \tilde{u}_{1}\right\|_{L^1(\mathbb{R}_{+};  \dot{B}_{p, 1}^{-1+\frac{N}{p}}(\mathbb{R}_{+}^N))}^r\left\|D^{2} \tilde{B}_{1}\right\|_{L^1(\mathbb{R}_{+};  \dot{B}_{p, 1}^{-1+\frac{N}{p}}(\mathbb{R}_{+}^N))} \left\| \partial_{t}\left(\tilde{B}_{1}-\tilde{B}_{2}\right)\right\|_{L^1(\mathbb{R}_{+};  \dot{B}_{p, 1}^{-1+\frac{N}{p}}(\mathbb{R}_{+}^N))},\label{4.19}
	\end{align}
	and
		\begin{align}\label{4.20}
			&\left\|\tilde{B}_{2}^{i}\left(P_{i}^{j}(X_{1,ij})\partial_{j}\tilde{B}_{1}-P_{i}^{j}(X_{2,ij})\partial_{j}\tilde{B}_{2}\right)\right\|_
			{L^1(\mathbb{R}_{+};  \dot{B}_{p, 1}^{-1+\frac{N}{p}}(\mathbb{R}_{+}^N))}\nonumber
			\\& \lesssim \left\|\tilde{B}_{2}^{i}P_{i}^{j}(X_{1,ij})\left(\partial_{j}\tilde{B}_{1}- \partial_{j}\tilde{B}_{2}\right)\right\|_
			{L^1(\mathbb{R}_{+};  \dot{B}_{p, 1}^{-1+\frac{N}{p}}(\mathbb{R}_{+}^N))}\nonumber\\
			&\qquad+\left\|\tilde{B}_{2}^{i}\left(P_{i}^{j}(X_{1,ij})-P_{i}^{j}(X_{2,ij})\right)\partial_{j}\tilde{B}_{2}\right\|_
			{L^1(\mathbb{R}_{+};  \dot{B}_{p, 1}^{-1+\frac{N}{p}}(\mathbb{R}_{+}^N))}\nonumber
			\\& \lesssim \sum_{r=1}^{N-1}\left\|D^2 \tilde{u}_{1}\right\|_{L^1(\mathbb{R}_{+};  \dot{B}_{p, 1}^{-1+\frac{N}{p}}(\mathbb{R}_{+}^N))}^r\left\| \nabla \left(\tilde{B}_{1}-\tilde{B}_{2}\right)\tilde{B}_{2}\right\|_{L^1(\mathbb{R}_{+};  \dot{B}_{p, 1}^{-1+\frac{N}{p}}(\mathbb{R}_{+}^N))}\nonumber
			\\&\quad +\max_{k=1,2}\sum_{r=0}^{N-2}\left\|D^2 \tilde{u}_{k}\right\|_{L^1(\mathbb{R}_{+};  \dot{B}_{p, 1}^{-1+\frac{N}{p}}(\mathbb{R}_{+}^N))}^r\left\|\nabla\tilde{B}_{2}\tilde{B}_{2} \right\|_{L^1(\mathbb{R}_{+};  \dot{B}_{p, 1}^{-1+\frac{N}{p}}(\mathbb{R}_{+}^N))}\nonumber\\
			&\qquad \times \left\|X_{1,ik}-X_{2,ik}\right\|_{L^\infty(\mathbb{R}_{+};  \dot{B}_{p, 1}^{\frac{N}{p}}(\mathbb{R}_{+}^N))}\nonumber
			\\&\lesssim \max_{k=1,2}\sum_{r=0}^{N-1}\left\|D^2 \tilde{u}_{k}\right\|_{L^1(\mathbb{R}_{+};  \dot{B}_{p, 1}^{-1+\frac{N}{p}}(\mathbb{R}_{+}^N))}^{r}\left\|\partial_{t} \tilde{B}_{2}\right\|_{L^1(\mathbb{R}_{+};  \dot{B}_{p, 1}^{-1+\frac{N}{p}}(\mathbb{R}_{+}^N))}\nonumber
			\\ & \quad \times \bigg(\left\|D^{2} \left(\tilde{B}_{1}-\tilde{B}_{2}\right)\right\|_{L^1(\mathbb{R}_{+};  \dot{B}_{p, 1}^{-1+\frac{N}{p}}(\mathbb{R}_{+}^N))}\nonumber\\
			&\qquad\qquad+
			\left\|D^{2} \tilde{B}_{2}\right\|_{L^1(\mathbb{R}_{+};  \dot{B}_{p, 1}^{-1+\frac{N}{p}}(\mathbb{R}_{+}^N))} \left\|D^{2}\left(\tilde{u}_{1}-\tilde{u}_{2}\right)\right\|_{L^1(\mathbb{R}_{+};  \dot{B}_{p, 1}^{-1+\frac{N}{p}}(\mathbb{R}_{+}^N))}\bigg).
		\end{align}

	In conclusion,  we can obtain that
	\begin{equation}\label{4.21}
		\left\|\mathbf{f}_{1}\right\|_{L^1(\mathbb{R}_{+};  \dot{B}_{p, 1}^{-1+\frac{N}{p}}(\mathbb{R}_{+}^N))}\lesssim\sum_{r=1}^{2 N-2} L^r\left\|
		\tilde{u}_{1}-\tilde{u}_{2},\tilde{q}_{1}-\tilde{q}_{2},\tilde{B}_{1}-\tilde{B}_{2}\right\|_{\mathbf{U}}.
	\end{equation}
	
	Employing the same argument as that in proving (\ref{4.21}), we can obtain
	\begin{equation}\label{4.22}
		\left\|\mathbf{f}_{2}\right\|_{L^1(\mathbb{R}_{+};  \dot{B}_{p, 1}^{-1+\frac{N}{p}}(\mathbb{R}_{+}^N))}\lesssim\sum_{r=1}^{2 N-2} L^r\left\|
		\tilde{u}_{1}-\tilde{u}_{2},\tilde{B}_{1}-\tilde{B}_{2}\right\|_{\mathbf{U}}.
	\end{equation}
	
	We now consider $\mathrm{f}_{3}$ and $\mathrm{f}_{4}$,  since  $\mathrm{f}_{3}$ has the similar formula to that in $\mathrm{f}_{4}$, then we just need to estimate $\mathrm{f}_{4}$.  In view of (\ref{4.2}), we have
	\begin{align*}
		\nabla \mathrm{f}_{4}&=\nabla\left(P_{i}^{k}(X_{1,ik})\partial_{k}\tilde{B}_{1}^{i}- P_{i}^{k}(X_{2,ik})\partial_{k}\tilde{B}_{2}^{i}\right)\nonumber\\
		&
	=\nabla\left(\left(P_{i}^{k}(X_{1,ik})-P_{i}^{k}(X_{2,ik})\right)\partial_{k}\tilde{B}_{1}^{i}+
		P_{i}^{k}(X_{2,ik})\left(\partial_{k}\tilde{B}_{1}^{i}-\partial_{k}\tilde{B}_{2}^{i}\right)\right).
	\end{align*}
	By (\ref{3.8}), (\ref{3.9}), (\ref{4.12}) and (\ref{4.13}), we have
		\begin{align}\label{4.23}
			&\left\|\nabla \mathrm{f}_{4}\right\|_{L^1(\mathbb{R}_{+};  \dot{B}_{p, 1}^{-1+\frac{N}{p}}(\mathbb{R}_{+}^N))}  \nonumber\\
		& \lesssim \left\|\nabla\left(\left(P_{i}^{k}(X_{1,ik})-P_{i}^{k}(X_{2,ik})\right)\partial_{k}\tilde{B}_{1}^{i}\right)\right\|_{L^1(\mathbb{R}_{+};  \dot{B}_{p, 1}^{-1+\frac{N}{p}}(\mathbb{R}_{+}^N))}\nonumber\\
		&\qquad+\left\|\nabla\left(P_{i}^{k}(X_{2,ik})\left(\partial_{k}\tilde{B}_{1}^{i}-\partial_{k}\tilde{B}_{2}^{i}\right)\right)\right\|_{L^1(\mathbb{R}_{+};  \dot{B}_{p, 1}^{-1+\frac{N}{p}}(\mathbb{R}_{+}^N))}\nonumber
			\\&\lesssim \max_{k=1,2}\sum_{r=0}^{N-3}\left\|D^2 \tilde{u}_{k}\right\|_{L^1(\mathbb{R}_{+};  \dot{B}_{p, 1}^{-1+\frac{N}{p}}(\mathbb{R}_{+}^N))}^r
			\max_{k=1,2}\left\|D^2 \tilde{u}_{k}\right\|_{L^1(\mathbb{R}_{+};  \dot{B}_{p, 1}^{-1+\frac{N}{p}}(\mathbb{R}_{+}^N))}\nonumber\\
			&\qquad\times\left\|\nabla\tilde{B}_{1} \right\|_{L^1(\mathbb{R}_{+};  \dot{B}_{p, 1}^{\frac{N}{p}}(\mathbb{R}_{+}^N))}\left\|X_{1,jk}-X_{2,jk}\right\|_{L^\infty(\mathbb{R}_{+};  \dot{B}_{p, 1}^{\frac{N}{p}}(\mathbb{R}_{+}^N))}\nonumber
			\\&\quad+\max_{k=1,2}\sum_{r=0}^{N-2}\left\|D^2 \tilde{u}_{k}\right\|_{L^1(\mathbb{R}_{+};  \dot{B}_{p, 1}^{-1+\frac{N}{p}}(\mathbb{R}_{+}^N))}^r
			\left\|\partial_{i}(X_{1,jk}-X_{2,jk}) \right\|_{L^\infty(\mathbb{R}_{+};  \dot{B}_{p, 1}^{-1+\frac{N}{p}}(\mathbb{R}_{+}^N))}\nonumber\\
			&\qquad\times\left\|\nabla\tilde{B}_{1} \right\|_{L^1(\mathbb{R}_{+};  \dot{B}_{p, 1}^{\frac{N}{p}}(\mathbb{R}_{+}^N))}\nonumber
			\\&\quad  +\max_{k=1,2}\sum_{r=0}^{N-2}\left\|D^2 \tilde{u}_{k}\right\|_{L^1(\mathbb{R}_{+};  \dot{B}_{p, 1}^{-1+\frac{N}{p}}(\mathbb{R}_{+}^N))}^r \left\|D^2 \tilde{B}_{1}\right\|_{L^1(\mathbb{R}_{+};  \dot{B}_{p, 1}^{-1+\frac{N}{p}}(\mathbb{R}_{+}^N))}\nonumber\\
			&\qquad\times
			\left\|X_{1,jk}-X_{2,jk}\right\|_{L^\infty(\mathbb{R}_{+};  \dot{B}_{p, 1}^{\frac{N}{p}}(\mathbb{R}_{+}^N))} \nonumber
			\\&\quad +\sum_{r=1}^{N-1}\left\|D^2 \tilde{u}_{2}\right\|_{L^1(\mathbb{R}_{+};  \dot{B}_{p, 1}^{-1+\frac{N}{p}}(\mathbb{R}_{+}^N))}^r+\left\|D^2 (\tilde{B}_{1}- \tilde{B}_{2})\right\|_{L^1(\mathbb{R}_{+};  \dot{B}_{p, 1}^{-1+\frac{N}{p}}(\mathbb{R}_{+}^N))} \nonumber
			\\&\lesssim \sum_{r=1}^{2 N-2} L^r\left\|
			\tilde{u}_{1}-\tilde{u}_{2},\tilde{B}_{1}-\tilde{B}_{2}\right\|_{\mathbf{U}},
		\end{align}
	and  by (\ref{3.18}) and (\ref{4.8}), it yields that
		\begin{align}\label{4.24}
			&\left\|\partial_t(-\Delta)^{-1} \nabla \mathrm{f}_{4}\right\|_{L^1(\mathbb{R}_{+};  \dot{B}_{p, 1}^{-1+\frac{N}{p}}(\mathbb{R}_{+}^N))}\nonumber\\
			&\lesssim \left\|\partial_t(-\Delta)^{-1} \nabla  \partial_k \left(P^{k}_{i} (X_{1,ik})\tilde{B}^{i}_{1}-P^{k}_{i} (X_{2,ik})\tilde{B}^{i}_{2}\right)\right\|_{L^1(\mathbb{R}_{+};  \dot{B}_{p, 1}^{-1+\frac{N}{p}}(\mathbb{R}_{+}^N))}\nonumber
			\\&\lesssim \left\|\partial_t \left(\left(P^{k}_{i} (X_{1,ik})-P^{k}_{i} (X_{2,ik})\right)\tilde{B}^{i}_{1}\right)\right\|_{L^1(\mathbb{R}_{+};  \dot{B}_{p, 1}^{-1+\frac{N}{p}}(\mathbb{R}_{+}^N))}\nonumber\\
			&\qquad +\left\| \partial_t \left(P^{k}_{i} (X_{2,ik})\left(\tilde{B}^{i}_{1}-\tilde{B}^{i}_{2}\right)\right)\right\|_{L^1(\mathbb{R}_{+};  \dot{B}_{p, 1}^{-1+\frac{N}{p}}(\mathbb{R}_{+}^N))}\nonumber
			\\& \lesssim \sum_{r=1}^{2 N-2} L^r\left\|
			\tilde{u}_{1}-\tilde{u}_{2},\tilde{B}_{1}-\tilde{B}_{2}\right\|_{\mathbf{U}}.
		\end{align}

	The similar arguments to (\ref{4.23}) and (\ref{4.24}) can be applicable for estimating $\mathrm{f}_{3}$ to get
		\begin{align}\label{4.25}
			&\left\|\nabla \mathbf{f}_{3}\right\|_{L^1(\mathbb{R}_{+};  \dot{B}_{p, 1}^{-1+\frac{N}{p}}(\mathbb{R}_{+}^N))} +\left\|\partial_t(-\Delta)^{-1} \nabla \mathbf{f}_{3}\right\|_{L^1(\mathbb{R}_{+};  \dot{B}_{p, 1}^{-1+\frac{N}{p}}(\mathbb{R}_{+}^N))} \lesssim \sum_{r=1}^{2 N-2} L^r\left\|
			\tilde{u}_{1}-\tilde{u}_{2}\right\|_{\mathbf{U}}.
		\end{align}

	We next consider $\mathbf{f}_{5}$ given in (\ref{4.7}), while the detailed proof was given in \cite{TS}, we have
	\begin{equation}\label{4.26}
		\begin{aligned}
			&\left\|\mathbf{f}_{5}\right\|_{\dot{F}_{1,1}^{\frac{1}{2}-\frac{1}{2 p}} (\mathbb{R}_{+};  \dot{B}_{p, 1}^{-1+\frac{N}{p}}(\mathbb{R}^{N-1}))}
			+\left\|\mathbf{f}_{5}\right\|_{L^1(\mathbb{R}_{+};  \dot{B}_{p, 1}^{\frac{N-1}{p}}(\mathbb{R}^{N-1}))}
			\lesssim \sum_{r=1}^{2 N-2} L^r\left\|
			\tilde{u}_{1}-\tilde{u}_{2},\tilde{q}_{1}-\tilde{q}_{2}, \tilde{B}_{1}-\tilde{B}_{2}\right\|_{\mathbf{U}}.
		\end{aligned}
	\end{equation}
	
	Therefore, if we choose
	$$
	C \sum_{k=1}^{2 N-2} L^k \leq \frac{1}{2},
	$$
	then  by (\ref{4.21})-(\ref{4.26}), it holds that
	\begin{equation}\label{4.27}
		\left\|u^{*}, q^{*}, B^{*}\right\|_\mathbf{U} \leq \frac{1}{2}\left\|(\tilde{u}_{1}-\tilde{u}_{2},\tilde{q}_{1}-\tilde{q}_{2},\tilde{B}_{1}-\tilde{B}_{2})\right\|_\mathbf{U},
	\end{equation}
	which shows the mapping
	$$
	\Xi: \mathbf{U} \rightarrow \mathbf{U}
	$$
	is contracted. By the Banach fixed point theorem, there exists a unique fixed point $(u, q, B)$ of the mapping $\Xi$ in $\mathbf{U}$. This fixed point satisfies equation (\ref{1.9}) with all terms on the right-hand side expressed in terms of $(u, q, B)$, thereby demonstrating the existence of a global solution. The uniqueness and continuous dependence on the initial data follow naturally from this construction. Thus, we have completed the proof of Theorem \ref{THM1}.

\section{The constant magnetic field outside the fluid}\label{sec5}

For each $t > 0$, $( v, p, H )$ is required to satisfy
the following free boundary problem for the incompressible viscous and resistive MHD equations
\begin{equation}\label{5.1}
	\left\{\begin{aligned}
		&\partial_t v+(v \cdot \nabla) v-\operatorname{div}( \mathbf{D}(v)-P \mathbf{I})=(H \cdot \nabla) H, && \text { in } \Omega(t), \\
		&\partial_t H+(v \cdot \nabla) H=\Delta H+(H \cdot \nabla) v, && \text { in } \Omega(t), \\
		&\operatorname{div} v=0, \quad \operatorname{div} H=0, & & \text { in } \Omega(t),\\
		&(\mathbf{D}(v) -P\mathbf{I}) \mathbf{n}_{t}=0, & & \text { on } \Gamma(t),\\
		&H =\tilde{H}, &&  \text { on } \Gamma(t), \\
		&v|_{t=0}=v_0, \left.H\right|_{t=0}=H_0 && \text { on } \Omega_0.
	\end{aligned}\right.
\end{equation}
Here $\tilde{H}$ is the constant magnetic field outside the fluid and $P$ is  the total pressure.  Set
$$u(t, y) \equiv v(t, x(t, y)) ,\quad q(t, y) \equiv P(t, x(t, y)),  \quad B(t, y) \equiv H(t, x(t, y))-\tilde{H}.
$$
Then the free boundary problem (\ref{5.1}) is equivalent to the following problem for $(u,q,B)$ in
new coordinates:
\begin{equation}\label{5.2}
	\left\{\begin{aligned}
		&\partial_t u-\Delta u+\nabla q=V_{1}(u)+V_{2}(u, q)+\tilde{V}_{3}(u,B),  & &\quad \text{in} \quad \mathbb{R}_{+}\times \mathbb{R}_{+}^N ,
		\\ & \partial_t B-\Delta B=\tilde{V}_{4}(u,B), & & \quad \text{in}\quad  \mathbb{R}_{+}\times \mathbb{R}_{+}^N ,
		\\
		& \operatorname{div} u=V_{5}(u), \quad \operatorname{div} B=V_{6}(u,B), & & \quad \text{in} \quad \mathbb{R}_{+}\times \mathbb{R}_{+}^N ,
		\\
		&\left(\mathbf{D}(u)-q I\right) \mathbf{n}=V_{7}(u)+V_{8}(u, q),& & \quad \text{on}\quad \mathbb{R}_{+}\times \partial \mathbb{R}_{+}^N, \\
		&B=0, & & \quad \text{on}\quad  \mathbb{R}_{+}\times \partial \mathbb{R}_{+}^N,\\
		&u(0, y)=u_0(y), \quad B(0, y)=B_0(y),  & & \quad \text{in}\quad \mathbb{R}_{+}^N,
	\end{aligned}\right.
\end{equation}
where $\mathbf{n} = (0,\dots ,0,-1)$  denotes the outward normal and the nonlinear terms in (\ref{5.2}) are  exactly the same as those in (\ref{1.12}), except for $\tilde{V}_{3}(u,B)$ and $\tilde{V}_{4}(u,B)$, namely
	\begin{align*}
		\tilde{V}_3(u, B)&=(\tilde{H}^{i}+B^{i})\partial_{i}B+(\tilde{H}^{i}+B^{i})P_{i}^{j}\partial_{j}B, \\
		\tilde{V}_4(u, B)&=\sum_{k=1}^N P_k^i \partial_i\left( P_k^j \partial_j B\right)+\sum_{j=1}^NP_{j}^{i}\partial_{i}\partial_{j}B
		+\sum_{i=1}^N \partial_{i}(P_{i}^{j}\partial_{j}B)+ (\tilde{H}^{i}+B^{i})\partial_{i}u+(\tilde{H}^{i}+B^{i})P_{i}^{j}\partial_{j}u.
	\end{align*}

\begin{remark}
In fact, equation (\ref{5.1}) arises from the plasma-vacuum free-interface model, where the plasma is confined within a vacuum containing another magnetic field $\tilde{H}$. A free interface $\Gamma(t)$ moves with the plasma, separating the plasma region $\Omega(t)$ from the vacuum region. $\tilde{H}$ satisfies  in vacuum:
$$
\operatorname{curl} \tilde{H}=0, \quad \operatorname{div} \tilde{H}=0 .
$$

On the interface $\Gamma(t)$, it is required that there is no jump for the pressure or the normal components of magnetic fields:
$$
H \cdot \mathbf{n}_{t}=\tilde{H} \cdot \mathbf{n}_{t}, \quad P:=p+\frac{1}{2}|H|^2=\frac{1}{2}|\tilde{H}|^2.
$$
We consider  a special case where $\tilde{H}$ is a constant vector. Specifically, $\operatorname{curl} \tilde{H}=0$ and  $\operatorname{div} \tilde{H}=0$. Thus, in this paper, equation (\ref{1.3}) corresponds to the scenario where the vacuum magnetic field $\tilde{H}$ vanishes, and the boundary condition $H = 0$ is imposed, which is reasonable for resistive MHD.
\end{remark}

From Sections \ref{sec3} and \ref{sec4}, we only need to estimate $\tilde{V}_3(u, B)$ and $\tilde{V}_4(u, B)$.
By (\ref{A.3}), Proposition \ref{Aprop1} and  employing the same argument as that in proving (\ref{3.1}), we have
\begin{equation}\label{5.3}
	\begin{aligned}
		&\left\| \tilde{V}_3(u, B)\right\|_{L^1(\mathbb{R}_{+} ; \dot{B}_{p, 1}^{-1+\frac{N}{p}}(\mathbb{R}_{+}^N))}\nonumber\\
		&\lesssim \left\| (\tilde{H}^{i}+B^{i})P_{i}^{j}\partial_{j}B\right\|_{L^1(\mathbb{R}_{+} ; \dot{B}_{p, 1}^{-1+\frac{N}{p}}(\mathbb{R}_{+}^N))}+\left\| (\tilde{H}^{i}+B^{i})\partial_{i}B\right\|_{L^1(\mathbb{R}_{+} ; \dot{B}_{p, 1}^{-1+\frac{N}{p}}(\mathbb{R}_{+}^N))}
		\\ & \lesssim  \left(\sum_{r=1}^{N-1}\left\|\int_0^t D u d \tau\right\|_{L^{\infty}(\mathbb{R}_{+} ; \dot{B}_{p, 1}^{\frac{N}{p}}(\mathbb{R}_{+}^N))}^r+1\right)\|(\tilde{H}+B)\nabla B\|_{L^1\left(\mathbb{R}_{+} ; \dot{B}_{p, 1}^{-1+\frac{N}{p}}(\mathbb{R}_{+}^N)\right)}\\&\lesssim \left(\sum_{r=1}^{N-1}\left\|D^2 u\right\|_{L^1(\mathbb{R}_{+} ; \dot{B}_{p, 1}^{-1+\frac{N}{p}}(\mathbb{R}_{+}^N))}^r+1\right)\left(\|\nabla B\|_{L^1(\mathbb{R}_{+} ; \dot{B}_{p, 1}^{\frac{N}{p}}(\mathbb{R}_{+}^N))} \sup _{t>0}\|(\tilde{H}+B)\|_{\dot{B}_{p, 1}^{-1+\frac{N}{p}}(\mathbb{R}_{+}^N)}\right)
		\\&\lesssim \left(\sum_{r=1}^{N-1}\left\|D^2 u\right\|_{L^1(\mathbb{R}_{+} ; \dot{B}_{p, 1}^{-1+\frac{N}{p}}(\mathbb{R}_{+}^N))}^r+1\right)\left(\left\|D^{2} B\right\|_{L^1(\mathbb{R}_{+} ; \dot{B}_{p, 1}^{-1+\frac{N}{p}}(\mathbb{R}_{+}^N))}\left\| \int_t^{\infty} \partial_\tau B d \tau \right\|_{\dot{B}_{p, 1}^{-1+\frac{N}{p}}(\mathbb{R}_{+}^N)}\right)
		\\&\lesssim \left(\sum_{r=1}^{N-1}\left\|D^2 u\right\|_{L^1(\mathbb{R}_{+} ; \dot{B}_{p, 1}^{-1+\frac{N}{p}}(\mathbb{R}_{+}^N))}^r+1\right)\left(\left\|D^{2} B\right\|_{L^1(\mathbb{R}_{+} ; \dot{B}_{p, 1}^{-1+\frac{N}{p}}(\mathbb{R}_{+}^N))}\left\|  \partial_t B  \right\|_{L^{1}(\mathbb{R}_{+} ;\dot{B}_{p, 1}^{-1+\frac{N}{p}}(\mathbb{R}_{+}^N))}\right).
	\end{aligned}
\end{equation}
We have used the fact that $$\left\|f\right\|_{L^1(\mathbb{R}_{+} ; \dot{B}_{p, 1}^{-1+\frac{N}{p}}(\mathbb{R}_{+}^N))}=0,$$
where $f$ is a constant. Thus, employing the same argument as that in proving (\ref{3.13}), we obtain
	\begin{align*}
		\left\| \tilde{V}_4(u, B)\right\|_{L^1(\mathbb{R}_{+} ; \dot{B}_{p, 1}^{-1+\frac{N}{p}}(\mathbb{R}_{+}^N))}&\lesssim \sum_{r=2}^{2N-2}\left\|D^2 u\right\|_{L^1(\mathbb{R}_{+} ; \dot{B}_{p, 1}^{-1+\frac{N}{p}}(\mathbb{R}_{+}^N))}^r \| D^{2} B\|_{L^1(\mathbb{R}_{+} ; \dot{B}_{p, 1}^{-1+\frac{N}{p}}(\mathbb{R}_{+}^N)t)}\\&\quad+\left\| \partial_t B \right\|_{L^{1}(\mathbb{R}_{+},\dot{B}_{p, 1}^{-1+\frac{N}{p}}(\mathbb{R}_{+}^N))}\left(\sum_{r=1}^{N-1}\left\|D^2 u\right\|_{L^1(\mathbb{R}_{+} ; \dot{B}_{p, 1}^{-1+\frac{N}{p}}(\mathbb{R}_{+}^N))}^r+1\right)\\&\quad\times\left( \left\|D^{2} B\right\|_{L^1(\mathbb{R}_{+} ; \dot{B}_{p, 1}^{-1+\frac{N}{p}}(\mathbb{R}_{+}^N))}+\left\|D^{2} u\right\|_{L^1(\mathbb{R}_{+} ; \dot{B}_{p, 1}^{-1+\frac{N}{p}}(\mathbb{R}_{+}^N))}\right).
	\end{align*}

Consequently, the results in Theorem \ref{THM1} remain valid when equations (\ref{1.9}) are replaced by equations (\ref{5.2}).

	\appendix
	
	\section{Notations and useful results}

		\begin{definition}\label{def1}
		$\mathcal{F}[f]$ and $\mathcal{F}_{\xi}^{-1}$ denote the Fourier transform and the inverse Fourier transform, respectively, given by
		$$
		\hat{f}(\xi)=\mathcal{F}[f](\xi)=\int_{\mathbb{R}^N} e^{-i x \cdot \xi} f(x) d x, \quad \mathcal{F}_{\xi}^{-1}[g(\xi)](x)=\frac{1}{(2 \pi)^N} \int_{\mathbb{R}^N} e^{i x \cdot \xi} g(\xi) d \xi.
		$$
	\end{definition}
	
	 Given a nonnegative, monotonic decreasing, smooth radial function  $\hat{\chi}$
	supported in, say, the ball $B(0,2)$ of $\mathbb{R}^{N}$,  and with value 1 on $B(0,1)$, we set
	$$
	\widehat{\varphi}(\cdot):=\widehat{\chi}(\cdot / 2)-\widehat{\chi}(\cdot).
	$$
	We thus have decomposition of unity for $y\in \mathbb{R}^N$, namely
	$$\operatorname{supp} \widehat{\varphi} \subset\left\{y \in \mathbb{R}^N: 1 / 2 \leq|y| \leq 2\right\},\quad \text{and} \quad \sum_{k=-\infty}^{\infty} \widehat{\varphi}\left(2^{-k} y\right)=1 \quad \text{for all} \quad y \in \mathbb{R}^N \backslash\{0\}.
	$$
	
	Then we introduce the homogeneous Littlewood-Paley decomposition $\{\dot{\Delta}_k\}_{k \in \mathbb{Z}}$ over $\mathbb{R}^N$ by setting
	$$
	\dot{\Delta}_k f:=\widehat{\varphi}\left(2^{-k} D\right) f=\mathcal{F}^{-1}\left(\widehat{\varphi}_{k}\left( \cdot\right) \mathcal{F} f\right),
	$$
	where $\mathbb{Z}$ stands for all integers, and $\widehat{\varphi}_k(\cdot)=\widehat{\varphi}\left(2^{-k} \cdot\right)$.
	
	Let us define the homogeneous Besov spaces on $\mathbb{R}^N$. For that, we introduce the following homogeneous Besov semi-norms (for all $s \in \mathbb{R}$ and $p, r \in[1, \infty]$ ):
	\begin{equation}\|f\|_{\dot{B}_{p, r}^s} \equiv\left\{\begin{array}{lr}
			\left(\underset{k \in \mathbb{Z}}\sum 2^{s r k}\|\dot{\Delta}_k f\|_p^r\right)^{1 / r}, & 1 \leq r<\infty, \\
			\underset {k \in \mathbb{Z}}\sup 2^{s k}\|\dot{\Delta}_k f\|_p, & r=\infty.
		\end{array}\right.
	\end{equation}
	
	As $\|f\|_{\dot{\mathrm{B}}_{p, r}^s\left(\mathbb{R}^N\right)}=0$ does not imply that $f \equiv 0$, functions contained in a homogeneous Besov space will be assumed to have the following control on the low Fourier frequencies.  Let $\mathcal{S}_h^{\prime}\left(\mathbb{R}^N\right)$  stand for the set of tempered distributions $f$ over $\mathbb{R}^{N}$ that satisfy
	$$\lim _{\lambda \rightarrow \infty}\|\theta(\lambda D) f\|_{\mathrm{L}^{\infty}\left(\mathbb{R}^N\right)}=0, \quad \text { for any } \theta \in \mathrm{C}_c^{\infty}\left(\mathbb{R}^N\right).
	$$
	Here, $\theta(\lambda D) f$ stands for $\mathcal{F}^{-1} \theta(\lambda \cdot) \mathcal{F} f$ and  the notation $C_c^{\infty}\left(\mathbb{R}^N\right)$ denotes the set of all smooth functions with compact supports. Note that the above condition implies that any distribution in $\mathcal{S}_h^{\prime}\left(\mathbb{R}^N\right)$ tends weakly to 0 at infinity. In particular, $\mathcal{S}_h^{\prime}\left(\mathbb{R}^N\right)$ does not contain nonzero polynomial.

	Now, we define the homogeneous Besov space $\dot{B}_{p, r}^s(\mathbb{R}_{+}^N)$ as the set of all the restriction $f$ of the distribution $\tilde{f} \in \dot{B}_{p, r}^s\left(\mathbb{R}^N\right)$, i.e., $f=\tilde{f}|_{\mathbb{R}_{+}^N}$ with
	$$
	\|f\|_{\dot{B}_{p, r}^s(\mathbb{R}_{+}^N)} \equiv \inf \left\{\|\tilde{f}\|_{\dot{B}_{p, r}^s\left(\mathbb{R}^N\right)}<\infty:  \quad \tilde{f}=\sum_{k \in \mathbb{Z}} \dot{\Delta}_k f \text { in } \mathcal{S}_{h}^{\prime}, \quad f=\tilde{f}|_{\mathbb{R}_{+}^N}\right\}.
	$$
	
		\begin{prop}[cf. \cite{RP}]\label{Aprop2}
		Let $1 \leq p, r<\infty $. Then for  $-1+1 / p<s<1 / p$,  we have
		$$
		\dot{B}_{p, r}^s(\mathbb{R}_{+}^N)=\overline{\left\{f \in \dot{B}_{p, r}^s\left(\mathbb{R}^N\right): \operatorname{supp} f \subset \mathbb{R}_{+}^N\right\}}^{\|\cdot\|_{\dot{B}_{p, r}^s(\mathbb{R}^N)}}.
		$$
	\end{prop}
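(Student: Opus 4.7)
The plan is to establish the two set inclusions separately, identifying an element of the right-hand side with its restriction to $\mathbb{R}_{+}^N$. The inclusion ``$\supseteq$'' is essentially definitional: the restriction map $\tilde{f}\mapsto \tilde{f}|_{\mathbb{R}_{+}^N}$ is norm-nonincreasing from $\dot{B}_{p,r}^s(\mathbb{R}^N)$ into $\dot{B}_{p,r}^s(\mathbb{R}_{+}^N)$ straight from the infimum definition of the half-space norm, so any Cauchy sequence in $\dot{B}_{p,r}^s(\mathbb{R}^N)$ of distributions supported in $\mathbb{R}_{+}^N$ restricts to a convergent sequence in $\dot{B}_{p,r}^s(\mathbb{R}_{+}^N)$, which places the restriction of any element of the closure in the restriction space.

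For the direction ``$\subseteq$'', where the range hypothesis $-1+1/p<s<1/p$ plays the decisive role, given $f\in \dot{B}_{p,r}^s(\mathbb{R}_{+}^N)$ my plan is to construct an approximating sequence in two stages. First I would produce a distinguished extension $E_0 f\in \dot{B}_{p,r}^s(\mathbb{R}^N)$ supported in $\overline{\mathbb{R}_{+}^N}$, namely the zero-extension of $f$ to all of $\mathbb{R}^N$. The crucial input here is that $E_0$ is bounded on this Besov scale precisely when $-1+1/p<s<1/p$: for $s\geq 1/p$ nontrivial traces on $\partial\mathbb{R}_{+}^N$ would be forced by the jump, while for $s\leq -1+1/p$ multiplication by the characteristic function $\mathbf{1}_{\mathbb{R}_{+}^N}$ fails, so this range is exactly the regime where zero-extension is continuous. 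Second, once $E_0 f$ is in hand, I would approximate it by distributions whose support lies strictly inside $\mathbb{R}_{+}^N$: setting $g_\delta(y', y_N):=(E_0 f)(y', y_N-\delta)$ for $\delta>0$ yields a translate supported in $\{y_N\geq\delta\}\subset \mathbb{R}_{+}^N$, and translation continuity in $\dot{B}_{p,r}^s(\mathbb{R}^N)$, which holds thanks to the assumption $r<\infty$, gives $g_\delta\to E_0 f$ in the ambient norm as $\delta\to 0^+$. Restricting back to $\mathbb{R}_{+}^N$ then exhibits $f$ as the restriction of a limit of functions supported in $\mathbb{R}_{+}^N$, completing the inclusion.

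The main obstacle is the boundedness of the zero-extension $E_0$ on $\dot{B}_{p,r}^s$ throughout the indicated range, particularly for negative $s$ where $E_0 f$ must be interpreted as a tempered distribution rather than a pointwise-defined function. I would handle this either by duality, transferring the issue to multiplication by $\mathbf{1}_{\mathbb{R}_{+}^N}$ on the predual $\dot{B}_{p',r'}^{-s}$ whose exponent $-s$ again lies in $(-1+1/p',\,1/p')$ and hence falls in the zero-extension range for the dual scale, or by a direct Littlewood--Paley estimate on $\|\dot{\Delta}_k(\mathbf{1}_{\mathbb{R}_{+}^N}f)\|_{L^p}$ via a paraproduct decomposition and the $L^p$-boundedness of rough multipliers in this range. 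A secondary technicality is verifying that the translates $g_\delta$ lie in $\mathcal{S}_h'(\mathbb{R}^N)$, which follows from the support and decay properties inherited from $E_0 f$, and that the final restriction identity holds in the sense of distributions, which is immediate from testing against smooth functions compactly supported in $\mathbb{R}_{+}^N$.
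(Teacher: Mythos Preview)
The paper does not prove this proposition; it is stated with a citation to Danchin--Mucha \cite{RP} and used as input for the subsequent Proposition~\ref{Aprop3} on retraction/coretraction. There is therefore no in-paper argument to compare against.

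That said, your plan is the standard one and matches what is done in the cited reference: the decisive ingredient is boundedness of the zero-extension $E_0:\dot{B}_{p,r}^s(\mathbb{R}_+^N)\to\dot{B}_{p,r}^s(\mathbb{R}^N)$ on exactly the range $-1+1/p<s<1/p$, after which an inward translation (using strong continuity of translations for $r<\infty$) pushes the support into the open half-space and exhibits $E_0 f$ as a limit from the pre-closure set. One point you should make explicit to turn the outline into a proof is why the identification via restriction is \emph{injective} on the closure, i.e., why no nonzero element of $\dot{B}_{p,r}^s(\mathbb{R}^N)$ with $s>-1+1/p$ can be supported on the hyperplane $\{y_N=0\}$; without this, you only get a surjection from the right-hand side onto the left rather than an equality. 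This is again a standard fact (it follows, for instance, from the same duality you invoke for $E_0$, or from the scaling of $\|\dot{\Delta}_k\delta_{\{y_N=0\}}\|_{L^p}$), but it is the one step your plan leaves implicit.
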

	
	From Proposition \ref{Aprop2}, it is not difficult to prove that if $1 \leq p, r<\infty $ and $-1+1 / p<s<1 / p$,  then the space $C_{c}^{\infty}(\mathbb{R}_{+}^N)$ is dense in $\dot{B}_{p, r}^s(\mathbb{R}_{+}^N)$.

	We consider the restriction operator $R$ given by
	\begin{equation}\label{A9}
		R f(y)=\left.f(y)\right|_{y \in \mathbb{R}_{+}^N},
	\end{equation}
	for all $f \in \dot{B}_{p, r}^s\left(\mathbb{R}^N\right)$ with $s>0$ and it is understood in the sense of distribution for $s \leq 0$. Let the extension operator $E$ from $\overline{\left\{f \in \dot{B}_{p, r}^s\left(\mathbb{R}^N\right): \operatorname{supp} f \subset \mathbb{R}_{+}^N\right\}}$ be given by the zero-extension, i.e.,
	\begin{equation}\label{A10}
		Ef(y)= \begin{cases}f(y), & \text { in } \mathbb{R}_{+}^N, \\ 0, & \text { in } \overline{\mathbb{R}_{-}^N}.\end{cases}
	\end{equation}
	
	\begin{prop}[cf. \cite{TS}]\label{Aprop3}
		Let $1 \leq p<\infty, 1 \leq r<\infty$ and $-1+1 / p<s<1 / p$, and let $R$ and $E$ be operators defined in (\ref{A9}) and (\ref{A10}). It holds that
		$$
		\begin{aligned}
			& R: \dot{B}_{p, r}^s\left(\mathbb{R}^N\right) \rightarrow \dot{B}_{p, r}^s(\mathbb{R}_{+}^N), \\
			& E: \dot{B}_{p, r}^s(\mathbb{R}_{+}^N) \rightarrow \dot{B}_{p, r}^s\left(\mathbb{R}^N\right),
		\end{aligned}
		$$
		are linear bounded operators. Besides, it holds that
		$$
		R E=I d: \dot{B}_{p, r}^s(\mathbb{R}_{+}^N) \rightarrow \dot{B}_{p, r}^s(\mathbb{R}_{+}^N),
		$$
		where $Id$ denotes the identity operator. Namely, $R$ and $E$ are a retraction and a coretraction, respectively.
	\end{prop}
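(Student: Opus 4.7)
The plan is to establish the three assertions in order, with Proposition \ref{Aprop2} as the central tool. Throughout, the range $-1+1/p < s < 1/p$ is essential: the upper bound rules out a trace obstruction at $\partial\mathbb{R}_+^N$ (for $s \geq 1/p$, traces exist and zero extension generically fails to remain in the space), while the lower bound is needed to ensure that distributions supported in $\overline{\mathbb{R}_+^N}$ form a dense approximating class for an arbitrary element of $\dot{B}_{p,r}^s(\mathbb{R}_+^N)$.

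I would begin with the easier claims. Boundedness of $R$ is immediate from the definition of the norm on $\dot{B}_{p,r}^s(\mathbb{R}_+^N)$ as an infimum over extensions in $\dot{B}_{p,r}^s(\mathbb{R}^N)$: any $g \in \dot{B}_{p,r}^s(\mathbb{R}^N)$ is itself an admissible extension of its own restriction, giving $\|Rg\|_{\dot{B}_{p,r}^s(\mathbb{R}_+^N)} \leq \|g\|_{\dot{B}_{p,r}^s(\mathbb{R}^N)}$, and linearity is trivial. The retraction identity $RE = Id$ follows equally directly from the definitions of $R$ and $E$: for $f \in \dot{B}_{p,r}^s(\mathbb{R}_+^N)$, the distribution $Ef$ coincides with $f$ on $\mathbb{R}_+^N$ by construction, so restricting recovers $f$.

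The substantive step, and the main obstacle, is the boundedness of $E$. My approach is to apply Proposition \ref{Aprop2} to identify $\dot{B}_{p,r}^s(\mathbb{R}_+^N)$ with $\overline{Y}^{\|\cdot\|_{\dot{B}_{p,r}^s(\mathbb{R}^N)}}$, where $Y := \{h \in \dot{B}_{p,r}^s(\mathbb{R}^N) : \operatorname{supp} h \subset \overline{\mathbb{R}_+^N}\}$. Under this identification, the restriction map is a bounded linear bijection from $\overline{Y}$ onto $\dot{B}_{p,r}^s(\mathbb{R}_+^N)$, and its inverse -- which is precisely $E$ -- is bounded by the open mapping theorem. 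Concretely, for $f \in \dot{B}_{p,r}^s(\mathbb{R}_+^N)$, Proposition \ref{Aprop2} yields a sequence $(h_n) \subset Y$ that is Cauchy in $\dot{B}_{p,r}^s(\mathbb{R}^N)$ whose restrictions $h_n|_{\mathbb{R}_+^N}$ converge to $f$ in $\dot{B}_{p,r}^s(\mathbb{R}_+^N)$. Since each $h_n$ is supported in $\overline{\mathbb{R}_+^N}$, the zero extension of $h_n|_{\mathbb{R}_+^N}$ agrees with $h_n$ as a distribution; passing to the limit identifies $Ef$ with the $\dot{B}_{p,r}^s(\mathbb{R}^N)$-limit $\tilde{f}$ of $(h_n)$, and the desired bound $\|Ef\|_{\dot{B}_{p,r}^s(\mathbb{R}^N)} \leq C\|f\|_{\dot{B}_{p,r}^s(\mathbb{R}_+^N)}$ follows from the norm equivalence on $\overline{Y}$.

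The deep technical content is thus packaged inside Proposition \ref{Aprop2}, whose proof typically rests on a concrete characterization of $\dot{B}_{p,r}^s$ (e.g., via iterated finite differences or atomic/wavelet decompositions) that shows, in the range $-1+1/p < s < 1/p$, that the $\dot{B}_{p,r}^s(\mathbb{R}^N)$-norm of a zero extension is controlled by the $\dot{B}_{p,r}^s(\mathbb{R}_+^N)$-norm of the original function. If one wished to bypass Proposition \ref{Aprop2}, the natural fallback is to first establish the bound for $f \in C_c^\infty(\mathbb{R}_+^N)$ using such a characterization -- where, in the allowed range of $s$, the zero extension does not inflate the modulus of smoothness in an essential way -- and then extend by density, using that $C_c^\infty(\mathbb{R}_+^N)$ is dense in $\dot{B}_{p,r}^s(\mathbb{R}_+^N)$ for this range of $s$.
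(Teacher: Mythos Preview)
The paper does not supply its own proof of this proposition: it is stated with the attribution ``cf.\ \cite{TS}'' and followed immediately by a remark, with no proof environment. So there is nothing in the paper to compare your argument against directly.

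That said, your approach is sound and is the natural way to deduce the result from the ingredients the paper makes available. The boundedness of $R$ and the identity $RE=Id$ are indeed immediate from the definitions, and for the boundedness of $E$ you are right that Proposition~\ref{Aprop2} does all the work: once $\dot{B}_{p,r}^s(\mathbb{R}_+^N)$ is identified (via restriction) with the closed subspace $\overline{Y}\subset \dot{B}_{p,r}^s(\mathbb{R}^N)$, the zero extension is the inverse of this identification and inherits boundedness. Two small points worth tightening: (i) the statement of Proposition~\ref{Aprop2} in the paper uses $\operatorname{supp} f\subset \mathbb{R}_+^N$ (the open half-space), not $\overline{\mathbb{R}_+^N}$ as you wrote; (ii) to conclude that the inverse of $R|_{\overline{Y}}$ is exactly the zero extension $E$, you implicitly use that no nonzero element of $\dot{B}_{p,r}^s(\mathbb{R}^N)$ can be supported on the hyperplane $\{y_N=0\}$ in the range $s>-1+1/p$, which is what guarantees injectivity of $R$ on $\overline{Y}$. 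With these clarifications your argument is complete; invoking the open mapping theorem is harmless but unnecessary, since the ``$=$'' in Proposition~\ref{Aprop2} already encodes the norm equivalence.
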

	
	\begin{remark}
		According to Proposition \ref{Aprop3}, for  $-1+1 / p<s<1 / p$ and $1<p<\infty$,  we can regard that any distribution in $\dot{B}_{p, r}^s(\mathbb{R}_{+}^N)$ under such restriction on $s$ and $p$ can be extended into a distribution over the whole space $\mathbb{R}^N$ and conversely any distribution in $\dot{B}_{p, r}^s\left(\mathbb{R}^N\right)$ is restricted into a distribution over the half-space $\mathbb{R}_{+}^N$.
	\end{remark}
	
		\begin{prop} [cf. \cite{TS}]\label{Aprop1}
		Let $1<p<\infty$ and $-1+1 / p<s \leq N / p-1$. Then for any $f \in \dot{B}_{p, 1}^s(\mathbb{R}_{+}^N)$, it holds
		\begin{equation}\label{A11}
			\|\nabla f\|_{\dot{B}_{p, 1}^s(\mathbb{R}_{+}^N)} \simeq\|f\|_{\dot{B}_{p, 1}^{s+1}(\mathbb{R}_{+}^N)},
		\end{equation}
		where $\simeq$ denotes that the norms on both sides are equivalent.
	\end{prop}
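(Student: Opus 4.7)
The plan is to reduce to the whole-space equivalence $\|\nabla F\|_{\dot{B}_{p,1}^s(\R^N)} \simeq \|F\|_{\dot{B}_{p,1}^{s+1}(\R^N)}$ (valid for every $s\in\R$), which follows dyadic-block-wise from Bernstein's inequality $\|\dot{\Delta}_k \nabla F\|_{L^p} \simeq 2^k \|\dot{\Delta}_k F\|_{L^p}$ summed with weight $2^{sk}$, and then to transfer it to $\R_+^N$ through the infimum-over-extensions definition of $\|\cdot\|_{\dot{B}_{p,1}^\sigma(\R_+^N)}$. The easier inequality $\|\nabla f\|_{\dot{B}_{p,1}^s(\R_+^N)} \lesssim \|f\|_{\dot{B}_{p,1}^{s+1}(\R_+^N)}$ is then immediate: for $\varepsilon>0$, pick $\tilde f \in \dot{B}_{p,1}^{s+1}(\R^N)$ extending $f$ with $\|\tilde f\|_{\dot{B}_{p,1}^{s+1}(\R^N)} \leq (1+\varepsilon) \|f\|_{\dot{B}_{p,1}^{s+1}(\R_+^N)}$; since $\partial_j \tilde f$ extends $\partial_j f$, the whole-space bound combined with the infimum definition yields $\|\partial_j f\|_{\dot{B}_{p,1}^s(\R_+^N)} \lesssim \|\tilde f\|_{\dot{B}_{p,1}^{s+1}(\R^N)}$, and summation in $j$ followed by $\varepsilon\to 0$ finishes this direction.

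For the reverse inequality, the plan is to use the reflection extension $\hat f(y',y_N) := f(y', |y_N|)$ as a distinguished test extension and to compute its gradient explicitly. A direct check gives
\[
\nabla \hat f = \bigl(E_e(\partial_1 f),\,\ldots,\,E_e(\partial_{N-1} f),\,E_o(\partial_N f)\bigr),
\]
where $E_e$ and $E_o$ denote the even and odd reflections across $\partial\R_+^N$. The decisive feature is that $\nabla\hat f$ is a genuine gradient on $\R^N$, so the inversion $\hat f = -(-\Delta)^{-1}\operatorname{div}(\nabla \hat f)$ is valid modulo the low-frequency ambiguity of $\mathcal{S}_h'(\R^N)$, and the boundedness of the order-$(-1)$ Fourier multiplier $(-\Delta)^{-1}\operatorname{div}$ on homogeneous Besov spaces yields
\[
\|f\|_{\dot{B}_{p,1}^{s+1}(\R_+^N)} \leq \|\hat f\|_{\dot{B}_{p,1}^{s+1}(\R^N)} \lesssim \sum_{j=1}^{N-1}\|E_e(\partial_j f)\|_{\dot{B}_{p,1}^s(\R^N)} + \|E_o(\partial_N f)\|_{\dot{B}_{p,1}^s(\R^N)}.
\]
The proof thus reduces to showing that $E_e$ and $E_o$ are bounded operators $\dot{B}_{p,1}^s(\R_+^N) \to \dot{B}_{p,1}^s(\R^N)$ in the admissible range of $s$.

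The main obstacle is precisely this reflection boundedness across the full range $-1+1/p < s \leq N/p-1$. In the subrange $-1+1/p<s<1/p$ it is classical, a close cousin of the zero-extension bound in Proposition \ref{Aprop3}, obtainable by localising near $\{y_N=0\}$ and exploiting the symmetry of the reflection against the dyadic decomposition. The remaining portion $1/p\leq s\leq N/p-1$, which is nonempty only when $p<N-1$, is the delicate one: a naive Littlewood-Paley argument does not capture the boundary singularities that reflection generates, and one must rely on a dyadic analysis tailored to $\R_+^N$ or on real interpolation between strictly interior regularities, with the upper hypothesis $s\leq N/p-1$ being exactly what keeps the relevant critical embeddings in reach. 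I would follow the endpoint treatment of \cite{TS} to push the bounds through this last segment.
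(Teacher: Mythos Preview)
The paper does not supply its own proof; the proposition is quoted from \cite{TS}. Judged on its own, your argument for the easy direction is fine, and for the reverse direction in the subrange $-1+1/p<s<1/p$ it is also correct: both $E_e$ and $E_o$ can be written as sums of the zero extension and a reflection, hence are bounded there by Proposition~\ref{Aprop3}.

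The gap is in the range $1/p\le s\le N/p-1$, and it is not a technicality that an ``endpoint treatment'' can absorb: the odd reflection $E_o$ is genuinely unbounded from $\dot B_{p,1}^s(\R_+^N)$ to $\dot B_{p,1}^s(\R^N)$ once $s\ge 1/p$. Any $g$ with nonvanishing trace on $\partial\R_+^N$ has $E_o g$ carrying a jump across $\{y_N=0\}$, and such a jump lies at best in $\dot B_{p,\infty}^{1/p}$, never in $\dot B_{p,1}^s$ for $s\ge 1/p$. Hence your chosen extension $\hat f$ need not belong to $\dot B_{p,1}^{s+1}(\R^N)$ in this regime, and the chain $\|f\|_{\dot B_{p,1}^{s+1}(\R_+^N)}\le \|\hat f\|_{\dot B_{p,1}^{s+1}(\R^N)}\lesssim\|\nabla\hat f\|_{\dot B_{p,1}^s(\R^N)}$ collapses. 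A repair would require a different extension of $f$---for instance a higher-order Seeley reflection with moment conditions chosen so that the induced extension on $\partial_N f$ is itself bounded on $\dot B_{p,1}^s$ at the needed regularity---or an intrinsic half-space Littlewood--Paley characterisation; your closing sentence defers to \cite{TS} rather than providing either. Note, incidentally, that in all applications in this paper one has $p\ge N$, hence $N/p-1<1/p$, so the problematic range is empty; the gap concerns only the stated generality $1<p\le N-1$.
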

	\begin{prop}[cf. \cite{TS}]\label{Aprop}
		Let $1 \leq p<2 N$, then for all $f \in \dot{B}_{p, 1}^{-1+\frac{N}{p}}(\mathbb{R}_{+}^N)$ and $g \in \dot{B}_{p, 1}^{\frac{N}{p}}(\mathbb{R}_{+}^N)$, there exists a $C>0$ independent of $f$ and $g$ such that the following estimate holds:
		\begin{equation}\label{AA}
			\|f g\|_{\dot{B}_{p, 1}^{-1+\frac{N}{p}}(\mathbb{R}_{+}^N)} \leq C\|f\|_{\dot{B}_{p, 1}^{-1+\frac{N}{p}}(\mathbb{R}_{+}^N)}\|g\|_{\dot{B}_{p, 1}^{\frac{N}{p}}(\mathbb{R}_{+}^N)}.
		\end{equation}
	\end{prop}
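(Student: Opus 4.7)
The plan is to reduce \eqref{AA} to the analogous product estimate on the full space $\mathbb{R}^N$ via the infimum-over-extensions definition of $\dot{B}^s_{p,1}(\mathbb{R}^N_+)$ given in the appendix, and then prove the whole-space inequality using Bony's paraproduct decomposition. For any $\varepsilon>0$, I would pick extensions $\tilde f\in\dot{B}^{-1+N/p}_{p,1}(\mathbb{R}^N)$ and $\tilde g\in\dot{B}^{N/p}_{p,1}(\mathbb{R}^N)$ whose full-space norms lie within $\varepsilon$ of the corresponding half-space infima for $f$ and $g$. Because $p<2N$ forces $N/p>1/2$, the Besov embedding $\dot{B}^{N/p}_{p,1}(\mathbb{R}^N)\hookrightarrow L^\infty(\mathbb{R}^N)$ (available thanks to the unit summability index) ensures that $\tilde g$ is essentially bounded, so the distributional product $\tilde f\tilde g$ is well defined on $\mathbb{R}^N$ and its restriction to $\mathbb{R}^N_+$ equals $fg$. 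Consequently
\[
\|fg\|_{\dot{B}^{-1+N/p}_{p,1}(\mathbb{R}^N_+)}\leq\|\tilde f\tilde g\|_{\dot{B}^{-1+N/p}_{p,1}(\mathbb{R}^N)},
\]
and once the full-space inequality is established, taking the infimum over admissible extensions yields \eqref{AA}.

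For the whole-space estimate I would apply Bony's decomposition
\[
\tilde f\tilde g \;=\; T_{\tilde f}\tilde g \;+\; T_{\tilde g}\tilde f \;+\; R(\tilde f,\tilde g),
\]
and bound the three pieces by classical paraproduct continuity. The paraproduct $T_{\tilde g}\tilde f$ is controlled by $\|T_{\tilde g}\tilde f\|_{\dot{B}^{-1+N/p}_{p,1}}\lesssim\|\tilde g\|_{L^\infty}\|\tilde f\|_{\dot{B}^{-1+N/p}_{p,1}}$ together with the embedding above. For the opposite paraproduct I would use the negative-regularity bound
\[
\|T_\varphi h\|_{\dot{B}^{s+t}_{p,1}}\;\lesssim\;\|\varphi\|_{\dot{B}^{t}_{\infty,1}}\|h\|_{\dot{B}^{s}_{p,1}},\qquad t<0,
\]
with $t=-1$ and $s=N/p$, combined with the scale-preserving Besov embedding $\dot{B}^{-1+N/p}_{p,1}(\mathbb{R}^N)\hookrightarrow\dot{B}^{-1}_{\infty,1}(\mathbb{R}^N)$, which delivers the desired bound for $T_{\tilde f}\tilde g$.

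The delicate step, and the only place where the hypothesis $p<2N$ is genuinely used, is the remainder $R(\tilde f,\tilde g)$. The sum of the two regularity indices is $(-1+N/p)+N/p=-1+2N/p$, which is strictly positive precisely when $p<2N$; in that range Bony's classical remainder estimate gives
\[
\|R(\tilde f,\tilde g)\|_{\dot{B}^{-1+2N/p}_{p/2,1}(\mathbb{R}^N)}\;\lesssim\;\|\tilde f\|_{\dot{B}^{-1+N/p}_{p,1}(\mathbb{R}^N)}\|\tilde g\|_{\dot{B}^{N/p}_{p,1}(\mathbb{R}^N)}.
\]
A direct check of the Sobolev exponent yields $(-1+2N/p)-N/(p/2)=-1=(-1+N/p)-N/p$, so the scale-preserving embedding $\dot{B}^{-1+2N/p}_{p/2,1}(\mathbb{R}^N)\hookrightarrow\dot{B}^{-1+N/p}_{p,1}(\mathbb{R}^N)$ places the remainder in the target space. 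Combining the three contributions and letting $\varepsilon\downarrow 0$ concludes the proof of \eqref{AA}. I expect the remainder estimate, precisely because it is where the sharpness of the threshold $p<2N$ appears, to be the main subtle point; the two paraproducts and the extension-infimum reduction are routine ingredients.
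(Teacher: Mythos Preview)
Your argument is correct and follows the standard route: reduce to $\mathbb{R}^N$ by the infimum-over-extensions definition, then apply Bony's decomposition with the usual paraproduct and remainder bounds. The paper does not give its own proof of this proposition; it is stated with a citation to Ogawa--Shimizu [TS], and the paraproduct approach you outline is exactly the classical one underlying that result, so there is nothing substantively different to compare.

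Two small comments. First, your aside that ``$p<2N$ forces $N/p>1/2$'' is not what drives the embedding $\dot{B}^{N/p}_{p,1}(\mathbb{R}^N)\hookrightarrow L^\infty(\mathbb{R}^N)$; that embedding holds for every $1\le p<\infty$ because the regularity index equals the critical value $N/p$ and the summability index is $1$. The constraint $p<2N$ enters only through the remainder, as you correctly identify. Second, for $1\le p<2$ the target $\dot{B}^{-1+2N/p}_{p/2,1}$ in your remainder estimate has integrability exponent $p/2<1$; this is harmless since the remainder bound and the scale-invariant embedding $\dot{B}^{-1+2N/p}_{p/2,1}\hookrightarrow\dot{B}^{-1+N/p}_{p,1}$ remain valid in the quasi-Banach range, but it is worth flagging explicitly.
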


	\begin{definition} Let  $X$ be a Banach space with norm $\|\cdot\|_X$. Let $\left\{\phi_k\right\}_{k \in \mathbb{Z}}$ be the Littlewood-Paley dyadic decomposition of unity for $t \in \mathbb{R}$, namely
		$$
		\dot{\Delta}_k f:=\widehat{\phi}\left(2^{-k} D\right) f=\mathcal{F}^{-1}\left(\widehat{\phi}_{k}\left( \cdot\right) \mathcal{F} f\right).
		$$
		For $s \in \mathbb{R}$ and $1 \leq p<\infty$, we define $\dot{F}_{p, r}^s(\mathbb{R};  X)$ as the Bochner-Lizorkin-Triebel space with norm
		$$
		\|\tilde{f}\|_{\dot{F}_{p, r}^s(\mathbb{R};  X)} \equiv\left\{\begin{array}{lr}
			\left\|\left(\underset{k \in \mathbb{Z}}\sum 2^{s r k}\left\|\dot{\Delta}_k(t, \cdot)\right\|_X^r\right)^{1 / r}\right\|_{L^p(\mathbb{R})}, & 1 \leq r<\infty, \\
			\left\|\underset{k \in \mathbb{Z}}\sup  2^{s k}\left\| \dot{\Delta}_k(t, \cdot)\right\|_X\right\|_{L^p(\mathbb{R})}, & r=\infty.
		\end{array}\right.
		$$
	\end{definition}
	Analogously, we define the Bochner-Lizorkin-Triebel spaces $\dot{F}_{p, r}^s( (0,T);  X)$ as the set of all the restriction $f$ of a distribution $\tilde{f} \in \dot{F}_{p, r}^s(\mathbb{R};  X)$ i.e., $f=\tilde{f}|_ {(0,T)}$ on $X$ with
	$$
	\|f\|_{\dot{F}_{p, r}^s((0,T);  X)} \equiv \inf \left\{\|\tilde{f}\|_{\dot{F}_{p, r}^s(\mathbb{R};  X)}<\infty:  \quad f=\tilde{f}|_ {(0,T)}\right\}.
	$$

	For any scalar function $a=a(x)$ and $N$-vector valued function $\mathbf{b}=\left(b_{1}(x), \ldots, b_{N}(x)\right)$, we write
	\begin{align*}
		\nabla a=&\left(\partial_{1} a(x), \ldots, \partial_{N} a(x)\right), \quad \nabla \mathbf{b}=\left(\nabla b_{1}(x), \ldots, \nabla b_{N}(x)\right), \\
		\operatorname{div} \mathbf{b}=&\sum_{j=1}^{N} \partial_{j} b_{j}(x),\quad \nabla^{2} a=\left(\partial_{i} \partial_{j} a\right)_{i, j=1}^N,\quad D^{2} \mathbf{b}=(D^{2}b_1,\ldots,D^{2}b_N).
	\end{align*}
	
	For $y \in  \mathbb{R}^{N} $,  $\langle y\rangle = (1+|y|^{2})^{1/2}$. For any functions $f=f\left(t, y^{\prime}, y_N\right)$ and $g=g\left(t, y^{\prime}, y_N\right), \underset{(t)}{*} g, \underset{\left(t, x^{\prime}\right)}{*} g$ and $\underset{\left(y_N\right)}{*} g$ stand for the convolution between $f$ and $g$ with respect to the variable indicated under $*$, respectively. If both $f$ and $g$ are vector valued functions, $f \underset{\left(t, y^{\prime}\right)}{*} g$ denotes the convolution in $y^{\prime}$ as well as the inner-product of $f$ and $g$, i.e.,
	$$
	f \underset{\left(t, y^{\prime}\right)}{\cdot *} g=\sum_{\ell=1}^{N-1} \int_{\mathbb{R}} \int_{\mathbb{R}^{N-1}} f_{\ell}\left(t-s, y^{\prime}-x^{\prime}\right) g_{\ell}\left(s, x^{\prime}\right) d x^{\prime} d s.
	$$
	
	Denote the norm of $L^p\left(\mathbb{R}^{N-1}\right)$ with variable $y^{\prime} \in \mathbb{R}^{N-1}$  by $\|\cdot\|_{L_{y^{\prime}}^p}$. For the norm on $\dot{F}_{p, r}^s\left(I;  X\left(\mathbb{R}^{N-1}\right)\right)$, we use
	$$
	\|f\|_{\dot{F}_{p, r}^s(I;  X)}=\|f\|_{\dot{F}_{p, r}^s\left(I;  X\left(\mathbb{R}^{N-1}\right)\right)},
	$$
	unless it may cause any confusion. We denote $B(a,b)$ as the open ball centered at $a$ with radius $b>0$. Various constants are simply denoted by $C$ unless otherwise stated.

	We introduce the  Littlewood-Paley decomposition with separation of variables.
	\begin{definition}\label{defa7}%(The Littlewood-Paley decomposition of separated variables).
		For $k \in \mathbb{Z}$, let
		\begin{equation}\label{b}
			\begin{aligned}
				& \widehat{\chi}_k\left(y_N\right)=\left\{\begin{array}{ll}
					1, & 0 \leq\left|y_N\right| \leq 2^k, \\
					\text{smooth}, & 2^k<\left|y_N\right|<2^{k+1}, \\
					0, & 2^{k+1} \leq\left|y_N\right|,
				\end{array}\right. \\
				& \widehat{\varphi}_k\left(y_N\right)=\widehat{\chi}_{k+1}\left(y_N\right)-\widehat{\chi}_{k}\left(y_N\right).
			\end{aligned}
		\end{equation}
		and set
		\begin{equation}\label{a}
			\widehat{\Phi}_k(y) \equiv \widehat{\varphi}_k\left(\left|y^{\prime}\right|\right) \otimes \widehat{\chi}_{k-1}\left(y_N\right)+\widehat{\chi}_k\left(\left|y^{\prime}\right|\right) \otimes \widehat{\varphi}_k\left(y_N\right).
		\end{equation}
	\end{definition}
	Then it is obvious  that
	$$
	\sum_{k \in \mathbb{Z}} \widehat{\Phi}_k(y) \equiv 1, \quad y=\left(y^{\prime}, y_N\right) \in \mathbb{R}^N \backslash\{0\}.
	$$
	
	\begin{remark}[Varieties of the Littlewood-Paley dyadic decompositions]
		Let $\left(\tau, y^{\prime}, y_N\right) \in \mathbb{R} \times \mathbb{R}^{N-1} \times \mathbb{R}$ be Fourier's adjoint variables corresponding to $\left(t, x^{\prime}, \eta\right) \in \mathbb{R} \times \mathbb{R}^{N-1} \times \mathbb{R}$.
\begin{enumerate}[1.]
	\item $\left\{\Phi_k(x)\right\}_{k \in \mathbb{Z}}$ : the Littlewood-Paley dyadic decomposition over $x=\left(x^{\prime}, \eta\right) \in \mathbb{R}^N$ given by (\ref{a}).
	
	\item $\left\{\phi_k(t)\right\}_{k \in \mathbb{Z}}$ : the Littlewood-Paley dyadic decompositions in $t \in \mathbb{R}$.
	
	\item $\left\{\varphi_k\left(x^{\prime}\right)\right\}_{k \in \mathbb{Z}}$ and $\left\{\phi_k(\eta)\right\}_{k \in \mathbb{Z}}$ : the standard (annulus type) Littlewood-Paley dyadic decompositions in $x^{\prime} \in \mathbb{R}^{N-1}$ and $\eta \in \mathbb{R}$, respectively.
	
	\item $\left\{\chi_k\left(x^{\prime}\right)\right\}_{k \in \mathbb{Z}}$ and $\left\{\chi_k(\eta)\right\}_{k \in \mathbb{Z}}$ : the lower frequency smooth cut-off function given by (\ref{b}), respectively.
	
	\item All the above-defined decompositions are even functions.
\end{enumerate}		
	\end{remark}

	\begin{theorem}\label{thmA1}
		Let $1 <p < \infty$ and $-1+1/p<s$.  For all function $f=f\left(t, y^{\prime}, \eta\right) \in C\left(\overline{\mathbb{R}_{+}};  \dot{B}_{p, 1}^s(\mathbb{R}_{+}^N)\right) \cap\dot{W}^{1,1}\left(\mathbb{R}_{+};  \dot{B}_{p, 1}^s(\mathbb{R}_{+}^N)\right)$, $D^{2} f\left(t, y^{\prime}, \eta\right) \in L^1\left(\mathbb{R}_{+};  \dot{B}_{p, 1}^s(\mathbb{R}_{+}^N)\right)$,   the following estimate holds:
		\begin{equation}\label{A.1}
			\begin{aligned}
				& \sup _{\eta \in \mathbb{R}_{+}}\left(\|f(\cdot, \cdot, \eta)\|_{\dot{F}_{1,1}^{1-1 / 2 p}(\mathbb{R}_{+};  \dot{B}_{p, 1}^s\left(\mathbb{R}^{N-1}\right))}+\|f(\cdot, \cdot, \eta)\|_{L^1(\mathbb{R}_{+};  \dot{B}_{p, 1}^{s+2-1 / p}\left(\mathbb{R}^{N-1}\right))}\right) \\
				& \lesssim \left\|\partial_t f\right\|_{L^1(\mathbb{R}_{+};  \dot{B}_{p, 1}^s(\mathbb{R}_{+}^N))}+\left\|D^2 f\right\|_{L^1(\mathbb{R}_{+};  \dot{B}_{p, 1}^s(\mathbb{R}_{+}^N))}.
			\end{aligned}
		\end{equation}
	\end{theorem}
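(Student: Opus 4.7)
The plan is to extract the trace at level $y_N=\eta$ using two ingredients: (i) Bernstein's inequality in $y_N$, which transfers $L^p$ norms into $L^\infty_\eta L^p_{y'}$ at a cost of $2^{k/p}$ per dyadic block, and (ii) the parabolic balance between the temporal exponent $1-1/(2p)$ and the tangential exponent $s+2-1/p$. Throughout one may first extend $f$ to $\mathbb{R}\times\mathbb{R}^N$ via Proposition~\ref{Aprop3}, reducing the analysis to the whole space; this is legitimate in the admissible range of $s$, with standard Seeley-type modifications for $s\geq 1/p$.

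For the spatial part of the trace norm, decompose $f=\sum_{k\in\mathbb Z}\dot\Delta_k f$ in $(y',y_N)$. Each $\dot\Delta_k f$ has $y_N$-frequency bounded by $O(2^k)$, so Bernstein in $y_N$ yields
\begin{equation*}
\sup_\eta\|\dot\Delta_k f(t,\cdot,\eta)\|_{L^p_{y'}(\mathbb{R}^{N-1})}\lesssim 2^{k/p}\|\dot\Delta_k f(t)\|_{L^p(\mathbb{R}^N)}.
\end{equation*}
Multiplying by $2^{k(s+2-1/p)}$, summing in $k$, integrating in $t$, and invoking Proposition~\ref{Aprop1} to replace $\|f\|_{\dot B^{s+2}_{p,1}}$ by $\|D^2f\|_{\dot B^s_{p,1}}$ controls the $L^1_t(\dot B^{s+2-1/p}_{p,1})$ part of the trace norm by $\|D^2f\|_{L^1(\mathbb{R}_+;\dot B^s_{p,1}(\mathbb{R}_+^N))}$.

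For the time part, insert the temporal Littlewood--Paley blocks $\phi_j$ and bound the full norm by
\begin{equation*}
\sum_{j,k\in\mathbb Z}2^{j(1-1/(2p))+ks}\,\|\phi_j\dot\Delta_k f(\cdot,\cdot,\eta)\|_{L^1_tL^p_{y'}(\mathbb{R}^{N-1})}.
\end{equation*}
Applying $y_N$-Bernstein again supplies a factor $2^{k/p}$, and I split the sum according to whether $j\leq 2k$ (spatially dominant regime) or $j\geq 2k$ (temporally dominant regime). In the first regime, full-space Bernstein yields $\|\phi_j\dot\Delta_k f\|_{L^p}\lesssim 2^{-2k}\|D^2\phi_j\dot\Delta_k f\|_{L^p}$; geometric summation in $j\leq 2k$ collapses the combined weight to $2^{ks}$ acting on $\|D^2\dot\Delta_k f\|_{L^1_tL^p}$, and summation in $k$ yields $\|D^2f\|_{L^1_t\dot B^s_{p,1}}$. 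In the second regime, the identity $\phi_j g=[\phi_j(i\tau)^{-1}]\partial_t g$ supplies a factor $2^{-j}$, since the multiplier $\phi_j(i\tau)^{-1}$ is localized at $|\tau|\sim 2^j$ with $L^1$-kernel bounded by $2^{-j}$; geometric summation in $j\geq 2k$ again collapses the combined weight to $2^{ks}$, this time acting on $\|\partial_t\dot\Delta_k f\|_{L^1_tL^p}$, giving $\|\partial_tf\|_{L^1_t\dot B^s_{p,1}}$.

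The decisive point is the exact parabolic matching $1-1/(2p)=\tfrac12(2-1/p)$, which forces both regimes to produce the same exponent $2^{ks}$ after the geometric sums. The principal obstacle is keeping the Bernstein constants uniform in $\eta$, so that the block-wise bounds survive taking $\sup_\eta$, and verifying that the temporal multiplier $\phi_j(i\tau)^{-1}$ really acts on $L^1_t$ with norm $\lesssim 2^{-j}$; both reduce to standard facts about Mikhlin-type multipliers and are nontrivial only at the level of careful bookkeeping, the genuine analytic input being the two Bernstein inequalities that supply $2^{k/p}$ and $2^{-2k}$ (resp.\ $2^{-j}$) in the right places.
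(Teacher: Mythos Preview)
Your plan is correct and matches the paper's proof in all essential respects: extend to the whole space, split the double sum at the parabolic threshold $j\sim 2k$, trade $\phi_j$ for $2^{-j}\partial_t$ in the temporally dominant regime and for $2^{-2k}D^2$ in the spatially dominant regime, and use a one-dimensional $y_N$-trace estimate costing $2^{k/p}$ per block. The only cosmetic difference is that where you invoke Bernstein's inequality in $y_N$, the paper carries out the same step via its tensor Littlewood--Paley decomposition $\Phi_k=\varphi_k(|y'|)\otimes\chi_{k-1}(y_N)+\chi_k(|y'|)\otimes\varphi_k(y_N)$ together with H\"older against $\|\chi_k\|_{L^{p'}_\eta}\sim 2^{k/p}$; both produce the same factor and the same near-orthogonality between tangential and full-space blocks.
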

	
	\begin{proof}
%		For  $f \in C(\overline{\mathbb{R}_{+}};  \dot{B}_{p, 1}^s(\mathbb{R}_{+}^N)) \cap \dot{W}^{1,1}(\mathbb{R}_{+};  \dot{B}_{p, 1}^s(\mathbb{R}_{+}^N))$, $D^2 f \in L^1(\mathbb{R}_{+};  \dot{B}_{p, 1}^s(\mathbb{R}_{+}^N))$. Then
		By the definition of $\dot{B}_{p, 1}^s(\mathbb{R}_{+}^N)$, for any $\epsilon>0$, there exists $$\widetilde{f} \in C\left(\overline{\mathbb{R}_{+}};  \dot{B}_{p, 1}^s(\mathbb{R}^N)\right) \cap \dot{W}^{1,1}\left(\mathbb{R}_{+};  \dot{B}_{p, 1}^s(\mathbb{R}^N)\right) \cap L^1\left(\mathbb{R}_{+};  \dot{B}_{p, 1}^{s+2}(\mathbb{R}^N)\right),$$
		such that
		\begin{align*}
			& \|\widetilde{f}\|_{L^{\infty}\left(\mathbb{R}_{+};  \dot{B}_{p, 1}^s\left(\mathbb{R}^N\right)\right) \cap \dot{W}^{1,1}\left(\mathbb{R};  \dot{B}_{p, 1}^s\left(\mathbb{R}^N\right)\right)} \leq\|f\|_{L^{\infty}(\mathbb{R}_{+};  \dot{B}_{p, 1}^s(\mathbb{R}_{+}^N)) \cap \dot{W}^{1,1}\left(\mathbb{R}_{+};  \dot{B}_{p, 1}^s(\mathbb{R}_{+}^N)\right)}+\epsilon, \\
			& \|D^2 \widetilde{f}\|_{L^1(\mathbb{R}_{+};  \dot{B}_{p, 1}^s(\mathbb{R}^N))} \leq\left\|D^2 f\right\|_{L^1\left(\mathbb{R}_{+};  \dot{B}_{p, 1}^s(\mathbb{R}_{+}^N)\right)}+\epsilon.
		\end{align*}
		
		We then extend $\tilde{f}$ into $t<0$ by an even extension. For simplicity, we denote $\tilde{f}$ as $f$ in the following. It directly follows that
		\begin{equation}\label{A.2}
			f \in L^2\left(\mathbb{R}; \dot{B}_{p, 1}^{s+1}\left(\mathbb{R}^N\right)\right).
		\end{equation}
		From (\ref{A.2}), we see that
		\begin{equation}\label{A.3}
			\lim_{t\rightarrow\pm \infty}f(t, y^{\prime}, \eta) = 0, \quad \text{for a.e. }  (y^{\prime}, \eta) \in\mathbb{R}^{N-1}\times \mathbb{R}.
		\end{equation}
		As $\widehat{f}_{j}(\cdot)=\widehat{f}(2^{-j}\cdot)$, we have %\mathcal{F}(f_{j})(\cdot)=\widehat{f}(2^{-j}\cdot), \quad
		 $$f_{j}(\cdot)=2^{jN}f(2^{j}\cdot).$$

		For $1 < p <\infty$, it holds
			\begin{align}\label{A.4}
				 &\| f(\cdot, \cdot, \eta) \|_{\dot{F}_{1,1}^{1-1 / 2 p}(\mathbb{R}_{+};  \dot{B}_{p, 1}^s(\mathbb{R}_{y^{\prime}}^{N-1}))}\nonumber\\
				\leq & \left\|\sum_{i \in \mathbb{Z}} \sum_{j \geq 2 i} 2^{(1-1 / 2 p) j} 2^{s i}\left\|\phi_j \underset{(t)}{*} \varphi_i \underset{\left(y^{\prime}\right)}{*} f(t, \cdot, \eta)\right\|_{L^p(\mathbb{R}_{y^{\prime}}^{N-1})}\right\|_{L_t^1\left(\mathbb{R}_{+}\right)} \nonumber \\
				& +\left\|\sum_{i \in \mathbb{Z}} \sum_{j \leq 2 i} 2^{(1-1 / 2 p) j} 2^{s i}\left\| \phi_j \underset{(t)}{*} \varphi_i \underset{\left(y^{\prime}\right)}{*} f(t, \cdot, \eta)\right\|_{L^p(\mathbb{R}_{y^{\prime}}^{N-1})}\right\|_{L_t^1(\mathbb{R}_{+})} =: I+II.
			\end{align}
		Using (\ref{A.3}), we have
			\begin{align*}
				\phi_j(t) \underset{(t)}{*}  f\left(t,\cdot, \eta\right)&  =-\int_{\mathbb{R}_{+}} \partial_\tau\left(\int_\tau^{\infty} \phi_j(t-r) d r\right)  f\left(\tau, \cdot, \eta\right) d \tau \\
				& =-\left[\left(\int_\tau^{\infty} \phi_j(t-r) d r\right)  f\left(\tau, \cdot, \eta\right)\right]_{\tau=0}^{\infty}+\int_{\mathbb{R}_{+}}\left(\int_\tau^{\infty} \phi_j(t-r) d r\right) \partial_\tau \left(\tau, \cdot, \eta\right) d \tau \\
				& =\partial_t^{-1} \phi_j(t) \underset{(t)}{*} \partial_t  f\left(t, \cdot, \eta\right),
			\end{align*}
		where we set
		$$
		\partial_t^{-1} \phi_j(t-\tau) \equiv \int_{-\infty}^{t-\tau} \phi_j(r) \mathrm{d} r=\int_\tau^{\infty} \phi_j(t-r) d r.
		$$
		Here, we recall the Littlewood-Paley decomposition in Definition \ref{defa7}. Since $\partial_t^{-1} \phi_j$ is also a rapidly decreasing smooth function, by using the Hausdorff-Young inequality and $\chi_{j}(\eta)\ast \chi_{j-1}(\eta)=\chi_{j-1}(\eta)$, we have
		\begin{align}
			& I=\left\|\sum_{i \in \mathbb{Z}} \sum_{j \geq 2 i} 2^{(1-1 / 2 p) j} 2^{s i}\left\| \partial_t^{-1} \phi_j \underset{(t)}{*} \varphi_i\underset{\left(y^{\prime}\right)}{*} \partial_t u(t, \cdot, \eta)\right\|_{L^p(\mathbb{R}_{y^{\prime}}^{N-1})}\right\|_{L_t^1(\mathbb{R}_{+})}\notag \\
			& \lesssim\left\|\sum_{i \in \mathbb{Z}} 2^{s i} \sum_{j \geq 2 i} 2^{(1-/ 2 p) j}\left|2^{-j}\left(\partial_t^{-1} \phi_{0}\right)_j(t)\right| \underset{(t)}{*}\left\|\varphi_i \underset{\left(y^{\prime}\right)}{*} \partial_t f(t, \cdot, \eta)\right\|_{L^p(\mathbb{R}_{y^{\prime}}^{N-1})}\right\|_{L_t^1(\mathbb{R}_{+})} \notag\\
			& \lesssim \sum_{i \in \mathbb{Z}} 2^{s i} \sum_{j \geq 2 i} 2^{-\frac{1}{2 p} i}\left\|\int_{\mathbb{R}_{+}} \frac{2^i}{\left\langle 2^i(t-\tau)\right\rangle^2}\left\| \varphi_i \underset{\left(y^{\prime}\right)}{*} \partial_\tau f(\tau, \cdot, \eta)\right\|_{L^p(\mathbb{R}_{y^{\prime}}^{N-1})} d \tau\right\|_{L_t^1(\mathbb{R}_{+})} \notag\\& \lesssim \sum_{i \in \mathbb{Z}} 2^{s i} \sum_{j \geq 2 i} 2^{-\frac{1}{2 p} j}\left\|\frac{2^j}{\left\langle 2^j t\right\rangle^2}\right\|_{L_t^1\left(\mathbb{R}_{+}\right)}\left\|\left\| \sum_{m \in \mathbb{Z}} \Phi_m \underset{\left(y^{\prime}, \eta\right)}{*} \varphi_i \underset{\left(y^{\prime}\right)}{*} \partial_t f(t, \cdot, \eta)\right\|_{L^p(\mathbb{R}_{y^{\prime}}^{N-1})}\right\|_{L_t^1(\mathbb{R}_{+})}\notag \\
			& \lesssim \sum_{i \in \mathbb{Z}} 2^{s i}\left\|\sum_{j \geq 2 i} 2^{-\frac{1}{2 p} j}\left\| \sum_{|m-i| \leq 1} \Phi_m \underset{\left(y^{\prime}, \eta\right)}{*} \varphi_i \underset{\left(y^{\prime}\right)}{*} \chi_i(\eta) \underset{(\eta)}{*} \chi_{i-1}(\eta) \underset{(\eta)}{*} \partial_t f(t, \cdot, \eta)\right\|_{L^p(\mathbb{R}_{y^{\prime}}^{N-1})}\right\|_{L_t^1(\mathbb{R}_{+})} \notag\\
			& \lesssim  \sum_{i \in \mathbb{Z}} 2^{s i}\left\|\sum_{j \geq 2 i} 2^{-\frac{1}{2 p} j}\left\| \chi_i(\eta)\right\|_{L^{p^{\prime}}(\mathbb{R}_\eta^{+})}\left\|\left\|\Phi_i \underset{\left(y^{\prime}, \eta\right)}{*} \partial_t f(t, \cdot, \eta)\right\|_{L^p(\mathbb{R}_{y^{\prime}}^{N-1})}\right\|_{L^p(\mathbb{R}_\eta^{+})}\right\|_{L_t^1(\mathbb{R}_{+}t)} \notag\\
			& \lesssim \left\|\sum_{i \in \mathbb{Z}} 2^{s i}\left\| \Phi_i \underset{\left(y^{\prime}, \eta\right)}{*} \partial_t f(t, \cdot, \eta)\right\|_{L^p(\mathbb{R}_{+,(y^{\prime}, \eta)}^N)}\right\|_{L_t^1(\mathbb{R}_{+})}\nonumber\\
			& \lesssim \left\|\partial_t f\right\|_{L^1(\mathbb{R}_{+};  \dot{B}_{p, 1}^s(\mathbb{R}_{+}^N))}. \label{A.6}
		\end{align}
		On the other hand, by the Minkowski inequality and $p>1$, we have
		\begin{align}
			I I & =\left\|\sum_{i \in \mathbb{Z}} \sum_{j \leq 2 i} 2^{(1-1 / 2 p) j} 2^{s i}\left\| \phi_j \underset{(t)}{*} \varphi_j \underset{\left(y^{\prime}\right)}{*} f(t, \cdot, \eta)\right\|_{L^p(\mathbb{R}_{y^{\prime}}^{N-1})}\right\|_{L_t^1(\mathbb{R}_{+})} \notag\\
			& \lesssim \sum_{i \in \mathbb{Z}} 2^{s i} \sum_{j \leq 2 i} 2^{(1-1 / 2 p) j}\left\|\left|\phi_j\right| \underset{(t)}{*}\left\| \varphi_i \underset{\left(y^{\prime}\right)}{*} f(t, \cdot,\eta)\right\|_{L^p(\mathbb{R}_{y^{\prime}}^{N-1})}\right\|_{L_t^1(\mathbb{R}_{+})} \notag\\
			& \lesssim \sum_{i \in \mathbb{Z}} 2^{s i} \sum_{j \leq 2 i} 2^{(1-1 / 2 p) j}\left\|\left\| \sum_{m \in \mathbb{Z}} \Phi_m \underset{\left(y^{\prime}, \eta\right)}{*} \chi_i(\eta) \underset{(\eta)}{*} \chi_{i-1}(\eta) \underset{(\eta)}{*} \varphi_i \underset{\left(y^{\prime}\right)}{*} f(t, \cdot, \eta)\right\|_{L^p(\mathbb{R}_{y^{\prime}}^{N-1})}\right\|_{L_t^1(\mathbb{R}_{+})} \notag\\
			& \lesssim \left\|\sum_{i \in \mathbb{Z}} 2^{s i} 2^{(2-1 / p) i}\left\| \sum_{|m-i| \leq 1} \Phi_m \underset{\left(y^{\prime}, \eta\right)}{*} \chi_i(\eta) \underset{(\eta)}{*} \varphi_i \underset{\left(y^{\prime}\right)}{*} f(t, \cdot, \eta)\right\|_{L^p(\mathbb{R}_{y^{\prime}}^{N-1})}\right\|_{L_t^1(\mathbb{R}_{+})} \notag\\
			& \lesssim \left\|\sum_{i \in \mathbb{Z}} 2^{(s+2-1 / p) i}\left\| \chi_i\right\|_{L^{p^{\prime}}(\mathbb{R}_{+,, \eta})}\left\| \Phi_i \underset{\left(y^{\prime}, \eta\right)}{*} f(t, \cdot, \eta)\right\|_{L^p(\mathbb{R}_{+}^N)}\right\|_{L_t^1(\mathbb{R}_{+})}
			\notag\\
			&
			\lesssim \left\|\sum_{i \in \mathbb{Z}} 2^{(s+2) i}\left\|(D^{2})^{-1} \Phi_i \underset{\left(y^{\prime}, \eta\right)}{*} (D^{2})^{-1} f(t, \cdot, \eta)\right\|_{L^p(\mathbb{R}_{+}^N)}\right\|_{L_t^1\left(\mathbb{R}_{+}\right)} \nonumber \\
			& \lesssim \left\|\sum_{i \in \mathbb{Z}} 2^{s i}\left\| \Phi_i \underset{\left(y^{\prime}, \eta\right)}{*} D^{2} f(t, \cdot, \eta)\right\|_{L^p(\mathbb{R}_{+}^N)}\right\|_{L_t^1(\mathbb{R}_{+})}
			\nonumber\\
			&=C\|D^{2} f\|_{L^1(\mathbb{R}_{+};  \dot{B}_{p, 1}^s(\mathbb{R}_{+}^N))}. \label{A.7}
		\end{align}
		
		For $1 < p <\infty$, we can apply a similar treatment for the spatial variables. Indeed, we have
			\begin{align}\label{A.8}
				& \|f(\cdot, \cdot, \eta)\|_{L^1(\mathbb{R}_{+};  \dot{B}_{p, 1}^{s+2-1 / p}(\mathbb{R}_{y^{\prime}}^{N-1}))} \nonumber\\
				& \lesssim \left\|\sum_{i \in \mathbb{Z}} 2^{s i} 2^{(2-1 / p) i}\left\| \varphi_i \underset{\left(y^{\prime}\right)}{*} \chi_{i-1}(\eta) \underset{(\eta)}{*} \sum_{|m-i| \leq 1} \Phi_m\left(y^{\prime}, \eta\right) \underset{\left(y^{\prime}, \eta\right)}{*} f(t, \cdot, \eta)\right\|_{L^p(\mathbb{R}_{y^{\prime}}^{N-1})}\right\|_{L_t^1(\mathbb{R}_{+})}\nonumber \\
				& \lesssim \left\|\sum_{i \in \mathbb{Z}} 2^{(s+2) i}\left(2^{-i}\left\|\varphi_i \underset{\left(y^{\prime}\right)}{*} \chi_{i-1}(\eta) \underset{(\eta)}{*} \chi_i(\eta) \underset{(\eta)}{*} \Phi_i\left(y^{\prime}, \eta\right) \underset{\left(y^{\prime}, \eta\right)}{*} f(t, \cdot, \eta)\right\|_{L^p(\mathbb{R}_{y^{\prime}}^{N-1})}^p\right)^{1 / p}\right\|_{L_t^1(\mathbb{R}_{+})}\nonumber
				\\
				& \lesssim \left\|\sum_{i \in \mathbb{Z}} 2^{(s+2) i}\left(2^{-i} 2^i\left\|\varphi_i \underset{\left(y^{\prime}\right)}{*} \Phi_i\left(y^{\prime}, \eta\right) \underset{\left(y^{\prime}, \eta\right)}{*} f(t, \cdot, \eta)\right\|_{L^p(\mathbb{R}_{+}^N)}^p\right)^{1 / p}\right\|_{L_t^1(\mathbb{R}_{+})} \nonumber\\
				& \lesssim \left\|\sum_{i \in \mathbb{Z}} 2^{(s+2) i}\right\| \Phi_i\left(y^{\prime}, \eta\right) \underset{\left(y^{\prime}, \eta\right)}{*} f(t, \cdot, \eta)\left\|_{L^p(\mathbb{R}_{y^{\prime}}^{N-1} \times \mathbb{R}_\eta^{+})}\right\|_{L_t^1(\mathbb{R}_{+})}\nonumber \\
				& \lesssim \left\|\sum_{i \in \mathbb{Z}} 2^{s i}\right\| \Phi_i(\cdot, \cdot) \underset{\left(y^{\prime}, \eta\right)}{*} D^{2}f(s, \cdot, \cdot)\left\|_{L^p(\mathbb{R}_{y^{\prime}}^{N-1} \times \mathbb{R}_\eta^{+})}\right\|_{L_t^1(\mathbb{R}_{+})}\nonumber\\ &\lesssim \int_{\mathbb{R}_{+}}\|D^{2} f(t, y)\|_{\dot{B}_{p, 1}^s} d t.
			\end{align}

		Combining estimates (\ref{A.4})-(\ref{A.8}), we conclude  estimate (\ref{A.1}).
	\end{proof}

	\section*{Acknowledgments}
	
	Hao, Yang and Zhang were partially supported by NSF of China under Grant No. 12171460. Hao was also partially supported by CAS Project for Young Scientists in Basic Research under Grant No. YSBR-031 and National Key R\&D Program of China under Grant No. 2021YFA1000800.

\end{document}